\documentclass[10pt]{article}
\usepackage[ansinew]{inputenc}
\usepackage{amsmath}
\usepackage{amsfonts}
\usepackage{amssymb}
\usepackage{amsthm}
\usepackage{newlfont}
\usepackage{amsbsy}
\usepackage{bm}
\usepackage{color}
\usepackage{epsfig}
\usepackage{dsfont}
\usepackage{latexsym}
\usepackage{array}
\usepackage{longtable}
\usepackage{enumerate}
\usepackage{multicol}
\usepackage{mathrsfs}
\usepackage{wasysym}
\usepackage{graphics}
\usepackage{graphicx}
\usepackage{hyperref}
\hypersetup{
    colorlinks,
    citecolor=blue,
    filecolor=black,
    linkcolor=red,
    urlcolor=black
}

\voffset0.5cm
\setlength{\headheight}{0pt}
\setlength{\topmargin}{0pt}
\setlength{\headsep}{0pt}
\setlength{\textheight}{22cm}
\setlength{\footskip}{0.7cm}
\hoffset0.5cm
\setlength{\marginparwidth}{0pt}
\setlength{\oddsidemargin}{0pt}
\setlength{\evensidemargin}{0pt}
\setlength{\textwidth}{16cm}
\newtheorem{lemma}{Lemma}[section]
\newtheorem{theorem}{Theorem}[section]

\newtheorem{prop}{Proposition}[section]
\newtheorem{rem}{\it Remark}[section]
\newcommand{\nor}[2]{{\left\|{#1}\right\|_{#2}}}
\newcommand{\nora}[3]{{\left\|{#1}\right\|_{#2}^{#3}}}

\title{\textbf{Sharp well-posedness for the Chen-Lee equation}}
\author{Ricardo A. Pastr\'an R. \thanks{Ricardo Pastr\'an. Departamento de Matem\'aticas, Universidad Nacional de Colombia, Bogot\'a, Colombia. Carrera 30 No. 45-03. C\'odigo postal 11321.
E-mail: {\tt rapastranr@unal.edu.co}}\\ Oscar G. Ria\~no C. \thanks{Departamento de Matem\'aticas, Universidad Nacional de Colombia, Bogot\'a, Colombia. Carrera 30 No. 45-03. C\'odigo postal 11321.
E-mail: {\tt ogrianoc@unal.edu.co}}}

\begin{document}

\maketitle

\begin{abstract}
We study the initial value problem associated to a perturbation of the Benjamin-Ono equation or Chen-Lee equation. We prove that results about local and global well-posedness for initial data in $H^s(R)$, with $s>-1/2$, are sharp in the sense that the flow-map data-solution fails to be $C^3$ in $H^s(\mathbb{R})$ when $s<-\frac{1}{2}$. Also, we determine the limiting behavior of the solutions when the dispersive and dissipative parameters goes to zero. In addition, we will discuss the asymptotic behavior (as $|x|\to \infty$) of the solutions by solving the equation in weighted Sobolev spaces. 
\end{abstract}

\textit{Keywords:} Cauchy problem, local and global well-posedness, Benjamin-Ono equation.

\section{Introduction}
This paper is concerned with the following initial value problem associated to a perturbation of the Benjamin-Ono equation or Chen-Lee equation
\begin{equation}\label{cl}
\text{CL} \left\{
\begin{aligned}
u_t+uu_x+\beta \mathcal{H}u_{xx} + \eta (\mathcal{H}u_x-u_{xx})&=0, \qquad x\in \mathbb{R} \;\,(\text{or}\;\, x\in \mathbb{T}), \quad t > 0,  \\
u(x,0)&=\phi(x),
\end{aligned}
\right.
\end{equation}
where $\beta,\,\eta >0$ are constants. In the equation, $\mathcal{H}$ denotes the usual Hilbert transform given by
\begin{equation*}
\mathcal{H}f(x)=\dfrac{1}{\pi}\,\text{p.v.}\int_{-\infty}^{\infty}\dfrac{f(y)}{y-x}\,dy=i\, (\text{sgn}(\xi)\widehat{f}(\xi))^{\vee}(x)\;\text{for}\; \xi \in \mathbb{R},\; f\in \mathcal{S}(\mathbb{R}).
\end{equation*}
This equation was first introduced by H. H. Chen and Y. C. Lee in \cite{CL} to describe fluid and plasma turbulence and as a model for internal waves in a two-fluid system. The fourth and the fifth terms represent the instability and dissipation, respectively. The parameter $\eta$ represents the importance of instability and dissipation relative to dispersion and nonlinearity. H. H. Chen, Y. C. Lee and S. Qian in \cite{clq, clq1}, and B. -F. Feng and T. Kawahara, in \cite{FeKa}, investigated the initial value problem as well as stationary solitary and periodic waves, associated with Chen-Lee equation, from a numerical standpoint. R. Pastr\'an in \cite{P} proved using the Fourier restriction norm method that the initial value problem CL is locally well-posed in $H^s(\mathbb{R})$ for any $s>-1/2$, globally well-posed in $H^s(\mathbb{R})$ when $s\geq 0$ and ill-posed in $H^s(\mathbb{R})$, if $s<-1$. Additionally, Pastr\'an and Ria\~no in \cite{PR} showed using the purely dissipative methods of Dix for the KdV-B equation \cite{Dix} that CL is locally and globally well-posed in the spaces $H^s(\mathbb{R})$ and $H^s(\mathbb{T})$ for any $s>-1/2$. In the periodic setting it was showed that CL is ill-posed in $H^s(\mathbb{T})$, when $s<-1$. 
\\ \\
Here, we say that the Cauchy problem or initial value problem CL is \textit{ill-posed} in the space $X$ when the flow-map data-solution fails to be $C^k$ in $X$ for some $k\in \mathbb{N}$. As a consequence, we cannot solve the Cauchy problem for the Chen-Lee equation by a Picard iterative method implemented on its integral formulation (see \cite{Amin, Pilod} for similar results). In particular, the methods introduced by Bourgain \cite{Bourgain} and Kenig, Ponce and Vega \cite{KPV} for the KdV equation cannot be used for CL with initial data in the space $X$. This kind of ill-posedness result is weaker than the loss of uniqueness proved by Dix in the case of Burgers equation \cite{Dix}. 
\\ \\
We begin showing that our results in \cite{PR} about local and global well-posedness in $H^s(\mathbb{R})$ with $s>-1/2$ are sharp in the sense that the flow map data-solution of the CL equation fail to be $C^3$ in $H^s(\mathbb{R})$ for $s<-1/2$. This result is equivalent to the fact that we cannot solve the Cauchy problem CL in $H^s(\mathbb{R})$, $s<-1/2$, using a contraction argument on the integral equation. Next, we will prove as in \cite{BI} that the solutions of the Chen-Lee equation when the dispersion $\beta$ tends to zero and the dissipation $\eta>0$ is fixed converge to the solutions of the non-dispersive Chen-Lee equation $(\beta=0)$ in the $C([0,T];H^s(\mathbb{R}))$ topology when $s>-1/2$ and, in the same way, the result in \cite{PR} for non-dispersive Chen-Lee equation is sharp in the sense that the flow-map fails to be $C^2$ in $H^s(\mathbb{R})$ when $s<-\frac{1}{2}$. Also, we are interested in the limiting behavior of the solutions of CL when the dissipative parameter $\eta$ tends to zero. It will be shown as in \cite{BI} that solutions of the CL equation tend to solutions of the BO equation in the $C([0,T];H^s(\mathbb{R}))$ topology when $\eta$ goes to zero and $s>3/2$. Finally, some decay properties of the solution of initial value problem CL for $\eta>0$ are obtained, similar to those obtained for the Benjamin-Ono equation (see \cite{Iorio}). More precisely, we will prove that if the solution $u(t)$ of CL is sufficiently smooth $(u(t)\in H^3(\mathbb{R}))$ and falls off sufficiently fast as $|x|\to \infty$ $(u(t)\in L_3^2(\mathbb{R}))$ for all $t\in [0,T]$, then $u(t)=0$, for all $t\in [0,T]$.

\subsection{Notation}
Given $a$, $b$ positive numbers, $a\lesssim b$ means that there exists a positive constant $C$ such that $a\leq C b$. And we denote $a\sim b$ when, $a \lesssim  b$ and $b \lesssim a$. We will also denote $a\lesssim_{\lambda} b$ or $b\lesssim_{\lambda} a$, if the constant involved depends on some parameter $\lambda$. Given a Banach space $X$, we denote by $\nor{\cdot}{X}$ the norm in $X$. We will understand $\langle \cdot \rangle = (1+|\cdot|^2)^{1/2}$.
\\ \\
 Let $U$ be the unitary group in $H^s(\mathbb{R})$, $s\in \mathbb{R}$, generated by the skew-symmetric operator $-\beta \mathcal{H}\partial_x^2 $, which defines the free evolution of the Benjamin-Ono equation, that is,
\begin{equation}\label{unitarygroup}
U(t)=\exp (itq(D_x)), \qquad U(t)f= \Bigl(e^{itq(\xi)}\Hat{f}\Bigr)^{\vee}\quad \text{with}\quad f\in H^s(\mathbb{R}), \, t\in \mathbb{R},
\end{equation}
where $q(D_x)$ is the Fourier multiplier with symbol $q(\xi)=\beta \,\xi \,|\xi|$, for all $\xi\in \mathbb{R}$. Since the linear symbol of equation in (\ref{cl}) is $iq(\xi)+p(\xi)$, where $p(\xi)=\eta \,(\xi^2-|\xi|)$ for all $\xi \in \mathbb{R}$, we also denote by $S(t)=e^{-(\beta \mathcal{H}\partial_x^2 +\eta (\mathcal{H}\partial_x-\partial_x^2))t}$, for all $t\geq 0$, the semigroup in $H^s(\mathbb{R})$ generated by the operator $-(\beta \mathcal{H}\partial_x^2 +\eta (\mathcal{H}\partial_x-\partial_x^2))$, i.e.,
\begin{align}
S(t)f=\Bigl( e^{i\,q(\xi)\,t -p(\xi)\,t}\Hat{f} \Bigr)^{\vee}\quad \text{for}\quad f\in H^s(\mathbb{R}),\;t\geq 0. \label{semigrupos}
\end{align}
We will employ weighted Sobolev spaces defined by
\begin{equation}\label{sobpeso}
\mathcal{F}_{s,r}=H^s(\mathbb{R})\cap L_r^2(\mathbb{R}),\qquad s, r=0,1,2,...\quad \text{and} \quad \nora{f}{\mathcal{F}_{s,r}}{2}=\nora{f}{H^s}{2}+\nora{f}{L_r^2}{2}.
\end{equation}
Here $L_r^2(\mathbb{R})$, $r\in \mathbb{R}$ is the collection of all measurable functions $f:\mathbb{R}\to \mathbb{C}$ such that
\begin{equation}\label{ldospeso}
\nora{f}{L_r^2}{2}=\int_{\mathbb{R}}(1+x^2)^r\,|f(x)|^2\,dx <\infty.
\end{equation}
\subsection{Main Results}
First, we recall the results about well-posedness in \cite{PR}.
\begin{theorem}[Local and Global well-posedness \cite{PR}]\label{mainresult}
Let $s>-1/2$, $\beta \geq 0$ and $\eta>0$. Then for any $\phi \in H^s(\mathbb{R})$ there exist $T=T(\nor{\phi}{H^s})>0$ and a unique solution $u$ of the integral equation (\ref{intequation}) satisfying
\begin{align*}
&u\in C([0,T],H^s(\mathbb{R}))\cap C((0,T),H^{\infty}(\mathbb{R})).
\end{align*}
Moreover, the flow map $\phi \mapsto u(t)$ is smooth from $H^s(\mathbb{R})$ to $C([0,T],H^s(\mathbb{R}))\cap C((0,T],H^{\infty}(\mathbb{R}))\cap X_T^s$. Additionally, the supremum of all $T>0$ for which all the assertions above hold is infinity.
\end{theorem}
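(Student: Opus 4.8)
The plan is to solve the Duhamel formulation of CL by a contraction argument that exploits the parabolic smoothing hidden in the dissipative part of the symbol, in the spirit of the purely dissipative method of Dix for the KdV--Burgers equation. The integral equation in question is
\[
u(t)=S(t)\phi-\frac12\int_0^t \partial_x S(t-t')\,u^2(t')\,dt',
\]
with $S(t)$ the semigroup \eqref{semigrupos}. The first step is to make the smoothing of $S(t)$ quantitative. Since
\[
\mathrm{Re}\bigl(iq(\xi)-p(\xi)\bigr)=-p(\xi)=-\eta(\xi^2-|\xi|)\leq -\tfrac{\eta}{2}\,\xi^2+C_\eta,\qquad \xi\in\mathbb R,
\]
the multiplier $e^{-p(\xi)t}$ decays like a heat kernel at high frequency, and estimating $\sup_\xi\langle\xi\rangle^{\theta+1}e^{-p(\xi)t}$ gives, for $\theta\geq 0$ and $t>0$,
\[
\nor{S(t)\,\partial_x f}{H^{s+\theta}}\lesssim e^{C_\eta t}\,t^{-(\theta+1)/2}\,\nor{f}{H^s},
\]
so that $\theta+1$ derivatives are gained at the price of an integrable time singularity provided $\theta<1$ (and with no time singularity when the target index is nonpositive).

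Next I would set up the resolution space $X_T^s$ as a space of maps $u\colon(0,T]\to H^\infty(\mathbb R)$ carrying a norm that controls time-weighted quantities of the form $\sup_{0<t\leq T} t^{a(\sigma)}\nor{u(t)}{H^\sigma}$, for $\sigma$ in a suitable range above $s$ with weights $a(\sigma)$ dictated by the smoothing rates above, together with $\sup_{0\leq t\leq T}\nor{u(t)}{H^s}$, the weights chosen so that $S(\cdot)\phi\in X_T^s$ for each $\phi\in H^s$. On a ball of $X_T^s$ one then checks that $\Phi_\phi(u)(t):=S(t)\phi-\tfrac12\int_0^t \partial_x S(t-t')u^2(t')\,dt'$ is a contraction for $T=T(\nor{\phi}{H^s})$ small. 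The decisive ingredient is the bilinear estimate for $\partial_x S(t-t')u^2(t')$: one expands $u^2$ as a product, applies the fractional Leibniz / Sobolev product inequalities in a regularity window where $H^\sigma$ is essentially an algebra (this is where the instantaneous smoothing is indispensable when $s<0$, since $u^2$ need not lie in any negative space), and then absorbs the extra derivative and the two time weights through a Beta integral $\int_0^t (t-t')^{-\rho}(t')^{-\mu}\,dt'=B(1-\rho,1-\mu)\,t^{1-\rho-\mu}$, which converges exactly when $\rho,\mu<1$ and $\rho+\mu<1$. Tracking the exponents, the condition $\rho+\mu<1$ turns out to be equivalent to $s>-1/2$, which is also the reason the threshold is sharp. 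Uniqueness in $X_T^s$, the persistence $u\in C([0,T],H^s)$, and the gain $u\in C((0,T],H^\infty)$ then follow from standard continuity arguments and by bootstrapping the smoothing estimate.

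For the smoothness of the flow map, one notes that $\Phi_\phi$ is affine in $\phi$ and that $u\mapsto u^2$ is a bounded symmetric bilinear operator between the relevant spaces, so $F(\phi,u):=u-\Phi_\phi(u)$ is real-analytic, $D_uF$ at the fixed point is a small perturbation of the identity, and the analytic implicit function theorem gives that $\phi\mapsto u$ is $C^\infty$ (in fact analytic) from $H^s$ into $C([0,T],H^s)\cap C((0,T],H^\infty)\cap X_T^s$. For the global statement it suffices to bound $\nor{u(t)}{L^2}$ (the computation being legitimate once $t>0$ thanks to the smoothing): taking the $L^2$ inner product of the equation in \eqref{cl} with $u$, the nonlinearity $uu_x$ and the dispersive term $\beta\mathcal Hu_{xx}$ vanish by skew-symmetry, the dissipative term $-\eta u_{xx}$ contributes $+\eta\nor{u_x}{L^2}^2$, and the instability term $\eta\mathcal Hu_x$ contributes $-\eta\int_{\mathbb R}|\xi|\,|\widehat u(\xi)|^2\,d\xi$; bounding the last one by $\tfrac{\eta}{2}\nor{u}{L^2}^2+\tfrac{\eta}{2}\nor{u_x}{L^2}^2$ via $|\xi|\leq\tfrac12(1+\xi^2)$ and absorbing it leaves $\frac{d}{dt}\nor{u(t)}{L^2}^2\leq \eta\,\nor{u(t)}{L^2}^2$, hence $\nor{u(t)}{L^2}\leq e^{\eta t/2}\nor{\phi}{L^2}$ for $\phi\in L^2$. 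For arbitrary $s>-1/2$ the instantaneous smoothing places $u(t_0)$ in $L^2$ for every $t_0>0$, and this bound together with the usual blow-up alternative (a finite maximal existence time would force the $L^2$ norm to blow up there) lets the local theory be iterated indefinitely; thus the supremum of admissible $T$ is infinite.

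I expect the main obstacle to be the bilinear estimate in the range $-1/2<s<0$: one has to offset the single derivative in $uu_x$ against a parabolic gain that is itself barely one derivative, while keeping both the Duhamel time weight and the smoothing time weight on $u^2$ strictly below the integrability threshold --- and $s=-1/2$ is exactly where these competing requirements collide.
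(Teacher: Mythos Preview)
Your outline is sound and would succeed, but the paper carries out the contraction argument with a leaner toolkit than you propose. The main differences are:

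\emph{Resolution space.} For $-\tfrac12<s<0$ the paper does not use a scale of time-weighted $H^\sigma$ norms reaching up to an algebra range. It takes simply
\[
\nor{u}{X_T^s}=\sup_{0<t\leq T}\Bigl(\nor{u(t)}{H^s}+t^{|s|/2}\nor{u(t)}{L^2}\Bigr),
\]
so only two norms enter: the base $H^s$ norm, and an $L^2$ norm with the minimal smoothing weight. For $s\geq0$ the space is just $C([0,T];H^s)$.

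\emph{Bilinear estimate.} Rather than fractional Leibniz or product rules in an algebra window, the paper works entirely on the Fourier side. For $s<0$ one uses $\langle\xi\rangle^s\leq|\xi|^s$ to write
\[
\nor{S(t-t')\partial_x(uv)(t')}{H^s}\leq \nor{|\xi|^{1+s}e^{-p(\xi)(t-t')}}{L^2_\xi}\,\nor{\widehat{u(t')}\ast\widehat{v(t')}}{L^\infty_\xi},
\]
and then Cauchy--Schwarz (Young) gives $\nor{\widehat{u}\ast\widehat{v}}{L^\infty}\leq\nor{u}{L^2}\nor{v}{L^2}\leq (t')^{-|s|}\nor{u}{X_T^s}\nor{v}{X_T^s}$. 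A change of variables yields $\nor{|\xi|^{1+s}e^{-p(\xi)\tau}}{L^2}\lesssim \tau^{-(3/4+s/2)}$, and the Beta integral produces the factor $T^{(1+2s)/4}$. The threshold $s>-1/2$ thus appears not as a convergence condition on the Beta integral (which converges for $s>-1$) but as positivity of this contraction exponent. For $s\geq0$ the same scheme works, replacing Cauchy--Schwarz by the Dix lemma $\nor{\langle\xi\rangle^s(\widehat{\phi}\ast\widehat{\psi})}{L^\infty}\lesssim\nor{\phi}{H^s}\nor{\psi}{H^s}$.

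\emph{Global step and smoothness.} Your $L^2$ energy argument and your bootstrap from the instantaneous smoothing match the paper exactly. The paper proves continuous dependence by a direct estimate rather than invoking the implicit function theorem, but your route to smoothness of the flow is the standard one and is fine.

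In short: your program is correct; the paper's version avoids product inequalities and any algebra-range auxiliary norm by exploiting the Fourier-side $L^2\times L^2\to L^\infty$ convolution bound, which keeps the bilinear estimate completely elementary.
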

The result of Theorem \ref{mainresult} is sharp in the following sense.
\begin{theorem}\label{malpuestodos}
Fix $s<-\frac{1}{2}$. Then there does not exist a $T>0$ such that (\ref{cl}) admits a unique local solution defined on the interval $[0,T]$ and such that the flow-map data-solution
\begin{equation}
\phi \longmapsto u(t), \qquad t\in [0,T],
\end{equation}
for (\ref{cl}) is $C^3$ differentiable at zero from $H^s(\mathbb{R})$ to $C\left([0,T];H^s(\mathbb{R})\right)$.
\end{theorem}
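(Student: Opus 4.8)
The plan is to argue by contradiction using the standard Picard-iteration obstruction (as in \cite{Amin, Pilod, BI}). Suppose such a $T>0$ exists and the flow map $\phi\mapsto u(t)$ is $C^3$ at zero from $H^s(\mathbb{R})$ to $C([0,T];H^s(\mathbb{R}))$. Since $u\equiv 0$ is the solution for $\phi\equiv 0$, I would first write the integral (Duhamel) formulation $u(t)=S(t)\phi-\frac12\int_0^t S(t-t')\partial_x(u^2)(t')\,dt'$, and then consider data $\phi=\delta\psi$ for a small real parameter $\delta$ and fixed $\psi\in H^s(\mathbb{R})$. Differentiating the flow map in $\delta$ at $\delta=0$ produces a hierarchy: the first-order term is $u_1(t)=S(t)\psi$, the second-order term is $u_2(t)=-\frac12\int_0^t S(t-t')\partial_x\big((S(t')\psi)^2\big)\,dt'$, and the third-order term is $u_3(t)=-\int_0^t S(t-t')\partial_x\big(S(t')\psi\cdot u_2(t')\big)\,dt'$. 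The $C^3$ hypothesis forces the bound $\|u_3(T)\|_{H^s}\lesssim \|\psi\|_{H^s}^3$ uniformly, so it suffices to exhibit a family $\psi=\psi_N$ with $\|\psi_N\|_{H^s}\sim 1$ for which $\|u_3(T)\|_{H^s}\to\infty$ as $N\to\infty$.

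Next I would compute $\widehat{u_3(T)}(\xi)$ explicitly on the Fourier side. Writing $e(\xi)=iq(\xi)-p(\xi)=i\beta\xi|\xi|-\eta(\xi^2-|\xi|)$ so that $\widehat{S(t)f}(\xi)=e^{e(\xi)t}\widehat f(\xi)$, one gets a double convolution integral
\begin{equation*}
\widehat{u_3(T)}(\xi)=c\,\xi\int_{\xi=\xi_1+\xi_2+\xi_3}(\xi_2+\xi_3)\,\widehat{\psi}(\xi_1)\widehat{\psi}(\xi_2)\widehat{\psi}(\xi_3)\,K(\xi_1,\xi_2,\xi_3)\,d\xi_1\,d\xi_2,
\end{equation*}
where the kernel $K$ arises from the iterated time integrals and equals a ratio built from $e^{e(\xi)T}$, $e^{(e(\xi_2)+e(\xi_3))T}$, $e^{(e(\xi_1)+e(\xi_2)+e(\xi_3))T}$ and the differences of the exponents in the denominators. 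I would then choose $\widehat{\psi_N}$ to be (normalized) characteristic functions of two well-separated frequency boxes — the canonical choice being $\widehat{\psi_N}=N^{-s}N^{-1/2}\big(\mathbf 1_{I_N}+\mathbf 1_{-I_N}\big)$ with $I_N$ an interval of length $\sim N$ centered at $\sim N$ — engineered so that in the relevant region of integration two of the large frequencies nearly cancel (e.g. $\xi_1\approx N$, $\xi_2\approx -N$, $\xi_3$ small), making the output frequency $\xi$ small while each input sits at high frequency. In that regime the dissipative factors $e^{-p(\xi_j)T}$ are harmless to cancellation because $p(\xi_1)+p(\xi_2)$ is comparable to $\eta N^2$ \emph{but} appears against $e^{-p(\xi)T}$ with $p(\xi)$ bounded; the crucial point is to track the algebraic gain/loss: the two derivative factors $\xi(\xi_2+\xi_3)$ contribute a factor $\sim N$ (from $\xi_2+\xi_3$, while $\xi$ stays $O(1)$), the three $N^{-s}N^{-1/2}$ normalizations and the measure of the integration region $\sim N^2$ combine, and the resulting power of $N$ in $\|u_3(T)\|_{H^s}/\|\psi_N\|_{H^s}^3$ is positive precisely when $s<-1/2$.

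The main obstacle — and the step requiring genuine care rather than bookkeeping — is controlling the kernel $K$ in the resonant region: one must show that the denominators coming from $e(\xi)-\big(e(\xi_1)+e(\xi_2)+e(\xi_3)\big)$ and from $e(\xi_2)+e(\xi_3)$ do not vanish and are, up to constants, of size $O(1)$ (or at worst a harmless polynomial in $N$) on the chosen region, so that $K$ is essentially bounded below by a positive constant and the phases do not oscillate enough to cause extra cancellation. Because the symbol $e(\xi)$ has both an imaginary (dispersive, odd, growing like $\xi|\xi|$) and a real (dissipative, even, growing like $-\eta\xi^2$) part, I would split $K$ into its modulus and control $|K|$ from below by bounding $|e(\xi_1)+e(\xi_2)+e(\xi_3)|$ above — here the near-cancellation $\xi_1+\xi_2\approx 0$ makes the dispersive parts almost cancel ($\beta\xi_1|\xi_1|+\beta\xi_2|\xi_2|\approx 0$) but the dissipative parts \emph{add}, so I must instead exploit that $e^{(e(\xi_1)+e(\xi_2)+e(\xi_3))T}$ is merely small and nonnegative in modulus and that the dominant, non-small contribution to $K$ comes from the term carrying only $e^{e(\xi)T}$, whose coefficient is $1$ over a product of two differences each of size $O(\eta N^2)$ at worst; so in fact the relevant lower bound may be only polynomially small in $N$, which is why the positive power of $N$ from the derivative and measure factors is essential and why the threshold is exactly $s=-1/2$. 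Once this kernel estimate is in hand, the divergence $\|u_3(T)\|_{H^s}\gtrsim N^{\sigma}$ with $\sigma=\sigma(s)>0$ for $s<-1/2$ follows by restricting the $H^s$ norm to the output box $|\xi|\lesssim 1$, contradicting the $C^3$ bound and completing the proof.
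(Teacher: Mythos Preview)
Your overall contradiction scheme (Picard iterates $u_1,u_2,u_3$ and the bound $\|u_3\|_{H^s}\lesssim\|\psi\|_{H^s}^3$) matches the paper, but the heart of the argument --- the frequency localisation and the time at which you evaluate --- is off in a way that breaks the proof. First, your proposed resonant region ``$\xi_1\approx N$, $\xi_2\approx -N$, $\xi_3$ small'' is incompatible with your own data: $\widehat{\psi_N}$ is supported on $\pm I_N$, so every input frequency has size $\sim N$ and ``$\xi_3$ small'' cannot occur. Second, and more seriously, you work at a \emph{fixed} time $T$ and output at low frequency $|\xi|\lesssim 1$. With dissipation $-\eta\xi^2$ in the symbol, any portion of the kernel that carries a factor $e^{e(\xi_j)T}$ with $|\xi_j|\sim N$ is killed exponentially; the only surviving piece is, as you say, of size $O(N^{-4})$ coming from two denominators $\sim\eta N^2$. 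If you track the powers of $N$ (three normalisations $N^{-3s-3/2}$, measure $N^2$, one derivative $N$, kernel $N^{-4}$, and $\langle\xi\rangle^s\sim 1$ at the output), you do not recover the threshold $s=-1/2$; the exponent you get is $-3s-5/2$, which only blows up for $s<-5/6$. Your claim that ``the threshold is exactly $s=-1/2$'' is therefore unjustified by the mechanism you describe.

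The paper's proof avoids all of this by two choices you are missing. It evaluates at a \emph{time depending on $N$}, $t_N=N^{-2-\epsilon}$, chosen precisely so that the dissipative factor $e^{-\eta\xi^2 t_N}\sim e^{-\eta N^{-\epsilon}}$ stays bounded below; and it outputs at \emph{high} frequency $\xi\in[N+3\gamma,N+4\gamma]$ (with $\gamma=\epsilon N$), so that \emph{both} derivative factors $\xi$ and $\xi_2$ contribute $\sim N$ and the output Sobolev weight $\langle\xi\rangle^s\sim N^s$ enters the count. The kernel is then estimated not by $1/\sigma^2$ but by a Taylor expansion at small time, giving size $t_N^2$; this yields $\|u_3(t_N)\|_{H^s}\gtrsim N^{-2s-1-2\epsilon}$, which diverges exactly when $s<-1/2$. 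To fix your proof you must introduce the scaling time $t_N\sim N^{-2-\epsilon}$, place the output at high frequency, and replace your kernel estimate by the small-time expansion $\frac{1}{\sigma(\xi_2,\xi_1)}\bigl(\tfrac{e^{\psi t}-1}{\psi}-\tfrac{e^{\sigma t}-1}{\sigma}\bigr)=t^2+O(t^3N^4)$.
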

A direct corollary of Theorem \ref{malpuestodos} is the next statement.
\begin{theorem}\label{illposed}
The flow map in the existing results for the Chen-Lee equation is not $C^3$ from $H^s(\mathbb{R})$ to $C\left( [0,T],H^s(\mathbb{R})\right)$, if $s<-\frac{1}{2}$.
\end{theorem}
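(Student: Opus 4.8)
The plan is to deduce Theorem \ref{illposed} directly from Theorem \ref{malpuestodos} by contraposition, so the argument is short. Suppose, toward a contradiction, that for some fixed $s<-\frac12$ there were an existing (or conceivable) solution theory for the Chen-Lee equation furnishing a time $T>0$, a neighborhood $\mathcal{U}$ of the origin in $H^s(\mathbb{R})$, and unique local solutions $u(\cdot)$ on $[0,T]$ for every datum in $\mathcal{U}$, with the flow-map data-solution $\phi\longmapsto u(t)$ of class $C^3$ from $\mathcal{U}\subset H^s(\mathbb{R})$ into $C([0,T];H^s(\mathbb{R}))$. First I would observe that $C^3$ regularity on $\mathcal{U}$ implies in particular that this map is three times (Fr\'echet, hence Ga\^ateaux) differentiable at the single point $\phi=0$; note that $0\in H^s(\mathbb{R})$ for every $s$ and the associated solution is the zero solution, so there is no issue with $0$ belonging to the domain. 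Thus we would have produced exactly a $T>0$ for which \eqref{cl} admits a unique local solution on $[0,T]$ whose flow-map is $C^3$ differentiable at zero from $H^s(\mathbb{R})$ to $C([0,T];H^s(\mathbb{R}))$, contradicting the conclusion of Theorem \ref{malpuestodos}. Hence no such theory can exist, which is the assertion of Theorem \ref{illposed}.

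Second, I would make explicit how this dovetails with the positive side. The results of \cite{P,PR} recalled in Theorem \ref{mainresult} give, for every $s>-\frac12$, $\beta\geq 0$ and $\eta>0$, a time $T>0$, unique solutions, and a smooth flow map; in particular any contraction / Picard iteration scheme implemented on the integral equation \eqref{intequation} produces a real-analytic — a fortiori $C^3$ — flow map. Therefore the obstruction exhibited above is sharply located at the Sobolev exponent $s=-\frac12$ and cannot be pushed below it by such a scheme, so Theorem \ref{illposed} is the exact counterpart of Theorem \ref{mainresult}: above $-\frac12$ the flow map is smooth, below $-\frac12$ it cannot even be $C^3$.

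The only point requiring any care — and it is minor — is the passage from ``$C^3$ as a map on the neighborhood $\mathcal{U}$'' to ``$C^3$ differentiable at $0$'' in the precise sense used in Theorem \ref{malpuestodos}: one must check that the notion of differentiability employed there is implied by, rather than merely compatible with, $C^k$ regularity near the point. Since this is the standard implication between Fr\'echet differentiability classes and since $0$ is an admissible datum with the zero solution, I do not anticipate any genuine obstacle here; all the substance of the ill-posedness has already been carried out in the proof of Theorem \ref{malpuestodos}, and Theorem \ref{illposed} is simply its reformulation in the language of the flow map.
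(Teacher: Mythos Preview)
Your proposal is correct and follows exactly the paper's approach: the paper states Theorem~\ref{illposed} as ``a direct corollary of Theorem~\ref{malpuestodos}'' with no further proof, and your contrapositive argument (a $C^3$ flow map near the origin would in particular be $C^3$ differentiable at $0$, contradicting Theorem~\ref{malpuestodos}) is precisely the intended one-line deduction. The additional remarks you make about sharpness at $s=-\tfrac12$ and the relation to Picard iteration are consistent with the surrounding discussion in the paper but go beyond what is needed.
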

When the dispersive parameter $\beta$ is zero and $\eta>0$, we have the following Cauchy problem associated to the Chen-Lee non-dispersive equation
\begin{equation}\label{CCLequa1}
\text{CLND}\left\{\begin{aligned}
&u_t+ uu_x+\eta(\mathcal{H}u_x-u_{xx})=0, \\
&u(x,0) =\phi(x),
\end{aligned} \right.
\end{equation} 
$\phi \in H^s(\mathbb{R})$, $s\in\mathbb{R}$. Following the ideas presented by Vento in \cite{Vento} for the Dissipative Benjamin-Ono equation, we can prove that for the Chen-Lee non-dispersive equation \eqref{CCLequa1}, the result obtained in Theorem \ref{mainresult} is sharp with the next Theorem.

\begin{theorem}\label{malpuestoCLND}
Let $s<-\frac{1}{2}$ be given. Then there does not exist a time $T>0$ such that \eqref{CCLequa1} admits a unique local solution on the time interval $[0,T]$ and such that the flow map data-solution $\phi \longmapsto u(t)$
of \eqref{CCLequa1} is $C^2$ at the origin from $H^s(\mathbb{R})$ to $C\left([0,T];H^s(\mathbb{R})\right)$.
\end{theorem}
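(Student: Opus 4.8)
The plan is to argue by contradiction, following the standard scheme for disproving smoothness of the flow map via a Picard iteration expansion, as in Vento's work on the dissipative Benjamin--Ono equation. Suppose that for some $T>0$ there is a unique local solution on $[0,T]$ and the flow map $\phi\mapsto u(t)$ is $C^2$ at the origin from $H^s(\mathbb{R})$ to $C([0,T];H^s(\mathbb{R}))$. For $\phi$ small, write $u$ via Duhamel's formula $u(t)=S(t)\phi-\tfrac12\int_0^t S(t-t')\,\partial_x(u^2)(t')\,dt'$, where here $S(t)$ is the CLND semigroup (i.e. $\beta=0$ in \eqref{semigrupos}, with symbol $e^{-p(\xi)t}$). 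Taking $\phi=\gamma\psi$ with $\gamma\to0$, the $C^2$ hypothesis forces the second-order term $u_2$ in the expansion $u=\gamma u_1+\gamma^2 u_2+o(\gamma^2)$ to satisfy a uniform bound $\sup_{t\in[0,T]}\|u_2(t)\|_{H^s}\lesssim \|\psi\|_{H^s}^2$. Here $u_1(t)=S(t)\psi$ and $u_2(t)=-\tfrac12\int_0^t S(t-t')\,\partial_x\big((S(t')\psi)^2\big)\,dt'$.

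The heart of the argument is then to exhibit a family of initial data $\psi=\psi_N$, normalized in $H^s(\mathbb{R})$, for which $\|u_2(T)\|_{H^s}\to\infty$ as $N\to\infty$, contradicting the uniform bound. Following Vento and Bourgain--Iorio, I would take $\widehat{\psi_N}$ to be (a constant multiple of) the sum of indicator functions of two small intervals of unit length located near frequencies $N$ and $-N+1$ (or near $\pm N$ with a slight offset), so that $\|\psi_N\|_{H^s}\sim 1$. The product $(S(t')\psi_N)^2$ then has a frequency component supported near $\xi\approx 1$ (the ``resonant'' low-frequency interaction of the two high-frequency bumps), where the outer semigroup factor $e^{-p(\xi)(T-t')}$ and the $\langle\xi\rangle^s$ weight are both of size $O(1)$, while the $\partial_x$ gives only an $O(1)$ factor; the gain comes from the fact that on the high-frequency pieces the dissipative symbol $p(\xi)=\eta(\xi^2-|\xi|)\sim \eta N^2$ makes $e^{-p(\xi)t'}$ integrable in $t'$ with a factor $\sim N^{-2}$, but this is more than compensated by the normalization: since $\|\psi_N\|_{H^s}\sim1$ forces $\widehat{\psi_N}\sim N^{-s}$ (with $-s>1/2$) on each bump, the bilinear term carries a factor $\sim N^{-2s}\cdot N^{-2}\cdot N = N^{-2s-1}$ from the convolution, product of amplitudes, time integration, and derivative, and $N^{-2s-1}\to\infty$ precisely when $s<-1/2$. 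I would carry this out by writing $\|u_2(T)\|_{H^s}^2$ as an explicit integral over the low-frequency output region, lower-bounding $e^{-p(\xi)(T-t')}\gtrsim 1$ there, and computing the resulting elementary $t'$-integral.

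The main obstacle, and the step requiring the most care, is the bilinear frequency analysis: one must choose the two high-frequency intervals so that their pairwise sum lands in a fixed low-frequency window uniformly in $N$, check that the phase factor $e^{i q(\xi)t'}$ is absent (or trivial, since $\beta=0$) so there is no oscillation-induced cancellation, and verify that the contributions of the non-resonant interactions (high$\times$high $\to$ high, which are killed by the outer dissipation, and the ``diagonal'' terms) do not cancel the main term. A secondary point is to make the reduction from the $C^2$ hypothesis to the quantitative bound on $u_2$ rigorous: one differentiates the flow map twice in $\gamma$ at $\gamma=0$ and identifies $\partial_\gamma^2 u|_{\gamma=0}=2u_2$, using that $C^2$ differentiability at the origin yields a bounded symmetric bilinear map $H^s\times H^s\to C([0,T];H^s)$, hence the bound $\|u_2(T)\|_{H^s}\lesssim \|\psi_N\|_{H^s}^2\lesssim 1$. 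I expect the remaining computations --- estimating $\|\psi_N\|_{H^s}$, evaluating the model integral for $\|u_2(T)\|_{H^s}$, and bounding the error terms --- to be routine once the frequency configuration is fixed, so the write-up will concentrate on the choice of $\psi_N$ and the lower bound for the resonant bilinear interaction.
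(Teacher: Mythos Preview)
Your overall strategy --- reduce to the bound $\|u_2(t)\|_{H^s}\lesssim\|\psi\|_{H^s}^2$ and violate it with a one-parameter family $\psi_N$ --- is the right one, and the reduction from $C^2$ to this bilinear bound is fine. The problem is the frequency configuration you propose. With unit-width bumps at $\sim N$ and $\sim -N$ and output at low frequency $\xi\approx 1$, your power count is off by a factor of $N$: the derivative contributes $|\xi|\sim 1$, the convolution measure is $\sim 1$, the product of amplitudes is $N^{-2s}$, and the time integral of $e^{-2\eta N^2 t'}$ is $\sim N^{-2}$, so $|\widehat{u_2(T)}(\xi)|\sim N^{-2s-2}$ on a set of measure $\sim 1$, hence $\|u_2(T)\|_{H^s}\sim N^{-2s-2}$. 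This diverges only for $s<-1$, not for $s<-\tfrac12$; it reproduces the earlier (non-sharp) result of \cite{P} rather than the sharp threshold. The extra $N$ you claim from ``convolution'' is not there when the bumps have unit width.

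The paper's proof uses a different interaction: \emph{high $\times$ high $\to$ high}, with \emph{wide} bumps. One takes $\widehat{\phi}=N^{-s}\gamma^{-1/2}(\chi_{I_N}+\chi_{-I_N})$, $I_N=[N,N+2\gamma]$, with $\gamma=N^{1-\epsilon}$, and evaluates $u_2$ at a short time $t_N=N^{-2-\epsilon}$ on the output window $\xi\in[2N,2N+4\gamma]$. There the derivative gives $|\xi|\sim N$, the resonance function $\lambda(\xi,\xi_1)\sim\eta N^2$ combined with $t_N$ yields $\bigl|\tfrac{e^{\lambda t_N}-1}{\lambda}\bigr|\sim t_N$, and the outer factor $e^{-p(\xi)t_N}\sim 1$; this produces $|\widehat{u_2(t_N)}(\xi)|\gtrsim N^{-2s-1-\epsilon}$ on a set of measure $\sim\gamma$, so that $\|u_2(t_N)\|_{H^s}^2\gtrsim N^{2s}\cdot N^{-4s-2-2\epsilon}\cdot\gamma\sim N^{-2s-1-3\epsilon}$, which blows up precisely when $s<-\tfrac12$. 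The two essential differences from your plan are (i) measuring the output at frequency $\sim 2N$ rather than $\sim 1$, which recovers the derivative factor $N$, and (ii) taking bumps of width $\gamma\sim N^{1-\epsilon}$, which makes the output window wide and contributes the missing $\gamma^{1/2}\sim N^{1/2}$ to the $H^s$-norm. Without both modifications the argument stalls at $s<-1$.
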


We study the limiting behavior of the solutions CL when $\eta>0$ is fixed and $\beta$ tends to zero with the next theorem.
\begin{theorem}\label{convergeCLND}
Let $\eta >0$ be fixed. Let $s > -1/2$ and $\phi\in H^s(\mathbb{R})$. If $u^{\beta}$ is the solution of equation \eqref{cl} with initial data $\phi$, constructed in Theorem \ref{mainresult} for all $\beta \geq 0$ in the time interval $[0,T]$ (remembering that $T$ is not dependent on $\beta$), then
	$$\lim_{\beta \to 0^{+}}\sup_{t\in [0,T]}\left\|u^{\beta}(t)-u^0(t)\right\|_{H^s}=0,$$
	where $u^0$ is the solution of equation \eqref{CCLequa1} with initial data $u^0(0)=\phi$.
\end{theorem}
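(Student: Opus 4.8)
The plan is to compare the Duhamel formulations of \eqref{cl} and \eqref{CCLequa1} and to close an estimate for the difference $u^{\beta}-u^{0}$ using the very same smoothing/contraction bounds that underlie Theorem \ref{mainresult}, together with an elementary frequency-splitting argument for the linear evolutions. Write $S_{\beta}(t)$ for the semigroup of \eqref{cl}, defined in \eqref{semigrupos}, with Fourier symbol $e^{iq(\xi)t-p(\xi)t}$ where $q(\xi)=\beta\,\xi|\xi|$ and $p(\xi)=\eta(\xi^{2}-|\xi|)$, and $S_{0}(t)$ for the semigroup of \eqref{CCLequa1}, i.e.\ the case $\beta=0$, whose symbol is $e^{-p(\xi)t}$. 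On the common interval $[0,T]$, with $T=T(\|\phi\|_{H^{s}})$ independent of $\beta$, both solutions satisfy
\[
u^{\beta}(t)=S_{\beta}(t)\phi-\tfrac{1}{2}\int_{0}^{t}S_{\beta}(t-t')\,\partial_{x}\big(u^{\beta}(t')^{2}\big)\,dt',\qquad \beta\ge0 .
\]
First I would record, via Theorem \ref{mainresult} (shrinking $T$ if necessary, still depending only on $\|\phi\|_{H^{s}}$), a uniform bound $\sup_{\beta\ge0}\|u^{\beta}\|_{X_{T}^{s}}\le M$; in particular $\sup_{\beta\ge0}\sup_{[0,T]}\|u^{\beta}(t)\|_{H^{s}}\le M$, and the smoothing estimates $\|u^{\beta}(t)\|_{H^{\sigma}}\lesssim_{M}t^{-(\sigma-s)/2}$ hold uniformly in $\beta$ for $\sigma$ in an admissible range $[s,\sigma_{\ast}]$ with $\sigma_{\ast}>1/2$.

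Next, setting $w^{\beta}=u^{\beta}-u^{0}$, I would subtract the two Duhamel formulas to obtain $w^{\beta}=I_{1}-\tfrac{1}{2}I_{2}-\tfrac{1}{2}I_{3}$ with
\[
I_{1}(t)=\big(S_{\beta}(t)-S_{0}(t)\big)\phi,\qquad I_{2}(t)=\int_{0}^{t}\big(S_{\beta}(t-t')-S_{0}(t-t')\big)\partial_{x}\big(u^{\beta}(t')^{2}\big)\,dt',
\]
and $I_{3}(t)=\int_{0}^{t}S_{0}(t-t')\,\partial_{x}\big((u^{\beta}+u^{0})(t')\,w^{\beta}(t')\big)\,dt'$. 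The term $I_{3}$ is handled exactly as in the contraction step of Theorem \ref{mainresult}: the parabolic smoothing of $S_{0}$, the $H^{s}$ product estimates and the uniform bound $M$ give $\|I_{3}(t)\|_{H^{s}}\lesssim_{M}\int_{0}^{t}(t-t')^{-\rho}\|w^{\beta}(t')\|_{H^{s}}\,dt'$ for some $\rho\in(0,1)$, so that a singular Gronwall inequality yields
\[
\sup_{[0,T]}\|w^{\beta}(t)\|_{H^{s}}\le C(M,\rho,T)\,\sup_{[0,T]}\big(\|I_{1}(t)\|_{H^{s}}+\|I_{2}(t)\|_{H^{s}}\big).
\]
Hence it remains to show $\sup_{[0,T]}\|I_{1}(t)\|_{H^{s}}\to0$ and $\sup_{[0,T]}\|I_{2}(t)\|_{H^{s}}\to0$ as $\beta\to0^{+}$.

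For $I_{1}$ I would use $\|I_{1}(t)\|_{H^{s}}^{2}=\int_{\mathbb{R}}|e^{iq(\xi)t}-1|^{2}e^{-2p(\xi)t}\langle\xi\rangle^{2s}|\widehat\phi(\xi)|^{2}\,d\xi$ and split at a large radius $R\ge2$: on $|\xi|>R$ the factor $|e^{iq(\xi)t}-1|^{2}e^{-2p(\xi)t}$ is $\le4$ and the integral is the tail $\int_{|\xi|>R}\langle\xi\rangle^{2s}|\widehat\phi|^{2}$, which is small for $R$ large uniformly in $t$ and $\beta$; on $|\xi|\le R$ one bounds $|e^{iq(\xi)t}-1|\le|q(\xi)|t\le\beta R^{2}T$, so that part is $O(\beta^{2})$ uniformly in $t$. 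For $I_{2}$, with $\tau=t-t'$ and $g_{t'}=u^{\beta}(t')^{2}$, the same split applied to $\|(S_{\beta}(\tau)-S_{0}(\tau))\partial_{x}g_{t'}\|_{H^{s}}^{2}=\int_{\mathbb{R}}|e^{iq(\xi)\tau}-1|^{2}e^{-2p(\xi)\tau}\xi^{2}\langle\xi\rangle^{2s}|\widehat g_{t'}(\xi)|^{2}\,d\xi$, using $e^{-2p(\xi)\tau}\xi^{2}\lesssim_{\eta}\tau^{-1}$ on $|\xi|>R$ and $|e^{iq(\xi)\tau}-1|^{2}\xi^{2}\le\beta^{2}R^{6}T^{2}$ on $|\xi|\le R$, gives
\[
\|(S_{\beta}(\tau)-S_{0}(\tau))\partial_{x}g_{t'}\|_{H^{s}}\lesssim_{T,\eta}\beta R^{3}\|g_{t'}\|_{H^{s}}+\tau^{-1/2}R^{-\delta}\|g_{t'}\|_{H^{s+\delta}}
\]
for any small $\delta>0$; integrating in $t'$ and using the product estimates together with the uniform smoothing bounds (which keep $\int_{0}^{t}\|g_{t'}\|_{H^{s}}\,dt'$ and $\int_{0}^{t}(t-t')^{-1/2}\|g_{t'}\|_{H^{s+\delta}}\,dt'$ bounded uniformly in $\beta$ and $t\in[0,T]$ once $\delta$ is small) yields $\sup_{[0,T]}\|I_{2}(t)\|_{H^{s}}\lesssim_{M}\beta R^{3}+R^{-\delta}$; choosing $R$ large and then $\beta$ small makes this arbitrarily small, which together with the estimate for $I_{1}$ and the displayed Gronwall bound completes the argument.

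The hard part is $I_{2}$: one has to extract the factor $\beta$ (which forces a frequency cutoff at some $R$), make the high-frequency part of $u^{\beta}(t')^{2}$ small (which forces $R$ large), and simultaneously keep the $t'$-integral convergent as $t'\to0^{+}$ — and it is precisely here that the parabolic smoothing of $S_{0}$ and, crucially, the uniform-in-$\beta$ bounds in $X_{T}^{s}$ from Theorem \ref{mainresult} are indispensable. All of these estimates must be carried out with constants that are uniform in $\beta$ and in $t\in[0,T]$.
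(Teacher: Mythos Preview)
Your three-term decomposition $w^{\beta}=I_{1}-\tfrac12 I_{2}-\tfrac12 I_{3}$ is correct, and the overall strategy works, but the paper's argument is shorter in two respects. First, for $I_{1}$ (and, once one writes it down, equally for your $I_{2}$) the paper bypasses the frequency splitting entirely: from $|e^{i\beta\xi|\xi|t}-1|\le\beta t|\xi|^{2}$ together with Lemma~\ref{LemmaLB1}, which bounds $\sup_{\xi}|t\xi^{2}|^{\lambda}e^{\eta(|\xi|-\xi^{2})t}$ uniformly on $[0,T]$, one obtains directly $\|I_{1}(t)\|_{H^{s}}\lesssim_{\eta,T}\beta\,\|\phi\|_{H^{s}}$; the dissipative smoothing absorbs the two extra powers of $|\xi|$ without any tail estimate on $\widehat{\phi}$ and yields a quantitative $O(\beta)$ rate rather than the qualitative $\beta R^{3}+R^{-\delta}$ you arrive at. The same mean-value trick, peeling off $\beta\tau|\xi|^{2}$ and absorbing it with half of $e^{-p(\xi)\tau}$ while the remaining half runs the bilinear estimate of Proposition~\ref{PropLB2}/Remark~\ref{RemarkLB1}, handles $I_{2}$ without invoking any $H^{s+\delta}$ smoothing of $u^{\beta}$. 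Second, for the absorption step the paper does not set up a pointwise-in-$t$ singular Gronwall; it works in the full norm $\mathfrak{X}_{T_{1}}^{s}$ and reuses the contraction estimate to get $\|I_{3}\|_{\mathfrak{X}_{T_{1}}^{s}}\le 2C_{s,\eta}T_{1}^{g(s)}\gamma\,\|w^{\beta}\|_{\mathfrak{X}_{T_{1}}^{s}}$, absorbs on a short subinterval $T_{1}=T_{1}(\|\phi\|_{H^{s}})$, and iterates.

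On that last point your version needs a small repair when $s\in(-\tfrac12,0)$: the bound underlying Proposition~\ref{PropLB2} controls the integrand via $\|\widehat{u(t')}\ast\widehat{w^{\beta}(t')}\|_{L^{\infty}}\le\|u(t')\|_{L^{2}}\|w^{\beta}(t')\|_{L^{2}}$, not via $\|w^{\beta}(t')\|_{H^{s}}$, so the claimed inequality $\|I_{3}(t)\|_{H^{s}}\lesssim_{M}\int_{0}^{t}(t-t')^{-\rho}\|w^{\beta}(t')\|_{H^{s}}\,dt'$ does not close in $L_{t}^{\infty}H_{x}^{s}$ alone. The fix is precisely to run the argument in $X_{T}^{s}$ (carrying the time-weighted $L^{2}$ piece), which is exactly what the paper's short-time absorption does and what the well-posedness machinery already provides.
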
 
Then we study the convergence of the solutions of CL to solutions of the Benjamin-Ono ($\eta=0$) equation, when the dispersion parameter is fixed and the dissipation $\eta$ tends to zero. 
\begin{theorem}\label{convergeBO}
Let $\beta>0$, $\phi\in H^{s}(\mathbb{R})$, $s>\frac{3}{2}$ and let $u^{\eta}$ be the solution of CL satisfying $u^{\eta}(0)=\phi$. Then the limit $u^0=\lim_{\eta \to 0} u^{\eta}$ exists in $C\left([0,T];H^s(\mathbb{R})\right)\cap C^1\left([0,T];H^{s-2}(\mathbb{R})\right)$ and is the unique solution of the CL equation with $\beta=0$ that depends continuously on the initial data.
\end{theorem}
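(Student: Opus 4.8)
The plan is to establish the convergence of $u^\eta$ to $u^0$ (the solution of the Benjamin-Ono equation) by a compactness-plus-uniqueness argument, exploiting that for $s>3/2$ the Benjamin-Ono equation is locally (in fact globally) well-posed in $H^s(\mathbb{R})$ and that the dissipative term $\eta(\mathcal{H}\partial_x-\partial_x^2)$ is, for $\eta$ small, a small perturbation at this level of regularity. First I would fix $\phi\in H^s(\mathbb{R})$, $s>3/2$, and recall that the Benjamin-Ono flow gives a solution $u^0\in C([0,T];H^s(\mathbb{R}))\cap C^1([0,T];H^{s-2}(\mathbb{R}))$ on a time interval $[0,T]$ whose length depends only on $\|\phi\|_{H^s}$. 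By Theorem \ref{mainresult} each $u^\eta$ (for $\eta>0$) exists on $[0,T]$ with a bound $\sup_{t\in[0,T]}\|u^\eta(t)\|_{H^s}\le C(\|\phi\|_{H^s})$ uniform in $\eta$; here one has to check that the local existence time and the a priori $H^s$ bound for CL can be taken uniformly as $\eta\to 0$, which follows from the standard energy estimate: pairing the equation with $u^\eta$ in $H^s$, the dissipative term contributes $-\eta\|\,|D_x|^{1/2}\langle D_x\rangle^s u^\eta\|_{L^2}^2\cdot(\text{something nonnegative})$-type terms (more precisely $\eta\int (\xi^2-|\xi|)|\langle\xi\rangle^s\widehat{u^\eta}|^2\,d\xi$, which is bounded below by $-\eta\|u^\eta\|_{H^s}^2/4$ since $\xi^2-|\xi|\ge -1/4$), so it only improves or marginally worsens the Gronwall constant, uniformly for $\eta\in(0,1]$.

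Next I would set $w=u^\eta-u^0$ and derive the equation it satisfies:
\begin{equation*}
w_t + u^\eta w_x + w\, u^0_x + \beta\mathcal{H}w_{xx} + \eta(\mathcal{H}u^\eta_x-u^\eta_{xx}) = 0,\qquad w(0)=0.
\end{equation*}
Performing an $H^{s-1}$ (or even $L^2$) energy estimate on $w$ — one level of regularity below $s$, which is where the quasilinear term $u^\eta w_x$ can be controlled by integration by parts using $\|u^\eta_x\|_{L^\infty}\lesssim\|u^\eta\|_{H^s}$ since $s>3/2$ — yields
\begin{equation*}
\frac{d}{dt}\|w(t)\|_{H^{s-1}}^2 \lesssim \left(\|u^\eta\|_{H^s}+\|u^0\|_{H^s}\right)\|w(t)\|_{H^{s-1}}^2 + \eta\,\|u^\eta\|_{H^s}\,\|u^\eta\|_{H^{s}}\,\|w\|_{H^{s-1}},
\end{equation*}
where the last term bounds $\eta|(\mathcal{H}u^\eta_x-u^\eta_{xx},w)_{H^{s-1}}|$ using that $\mathcal{H}\partial_x-\partial_x^2$ maps $H^{s}$ to $H^{s-2}\subset H^{s-1}$... one must be slightly careful here: $\partial_x^2$ costs two derivatives, so this pairing is really $\eta\langle \langle D\rangle^{s-1}(\mathcal{H}u^\eta_x-u^\eta_{xx}),\langle D\rangle^{s-1}w\rangle$ and should be estimated by $\eta\|u^\eta\|_{H^{s+1}}\|w\|_{H^{s-1}}$, which is \emph{not} uniformly bounded. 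The fix is the standard parabolic smoothing for CL with $\eta>0$: since $S(t)$ has symbol $e^{iq(\xi)t-p(\xi)t}$ with $p(\xi)=\eta(\xi^2-|\xi|)$ genuinely parabolic, one gains that $\eta^{1/2}u^\eta$ is bounded in $L^2([0,T];H^{s+1})$ uniformly in $\eta$ (from the energy identity, the good term $\eta\int\xi^2|\langle\xi\rangle^s\widehat{u^\eta}|^2$ integrated in time), so the forcing term is $O(\eta^{1/2})$ in an $L^2_t$-sense and Gronwall still closes.

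Then, applying Gronwall's inequality on $[0,T]$ with $w(0)=0$ gives $\sup_{t\in[0,T]}\|w(t)\|_{H^{s-1}}\le C\eta^{1/2}\to 0$, i.e.\ $u^\eta\to u^0$ in $C([0,T];H^{s-1}(\mathbb{R}))$. To upgrade this to convergence in $C([0,T];H^s(\mathbb{R}))$ I would use the uniform $H^s$ bound together with the interpolation inequality $\|w\|_{H^{s'}}\le\|w\|_{H^{s-1}}^{1-\theta}\|w\|_{H^{s}}^{\theta}$ for $s-1<s'<s$, which already gives convergence in every intermediate norm; for the endpoint $s$ itself, the cleanest route is a Bona–Smith type argument: regularize $\phi$ by $\phi_\delta$, use that $u^\eta_\delta\to u^0_\delta$ in $H^s$ (in fact in $H^{s+2}$, by the same estimate one level up applied to smooth data) and that both $\sup_t\|u^\eta-u^\eta_\delta\|_{H^s}$ and $\sup_t\|u^0-u^0_\delta\|_{H^s}$ are small uniformly in $\eta$ and in $\delta$ respectively (continuity of the data-to-solution map in $H^s$, which for CL is part of Theorem \ref{mainresult} and for BO is classical), and conclude by a three-$\varepsilon$ argument. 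Finally, $u^0$ solves CL with $\beta=0$, i.e.\ the Benjamin-Ono equation, and the identification $u^0_t=-u^0u^0_x-\beta\mathcal{H}u^0_{xx}\in C([0,T];H^{s-2})$ shows $u^0\in C^1([0,T];H^{s-2})$; uniqueness and continuous dependence are inherited from the well-posedness theory of BO at $s>3/2$. The main obstacle, as flagged above, is the two-derivative loss in the dissipative forcing term; everything hinges on extracting the $\eta$-uniform parabolic gain $\eta^{1/2}\|u^\eta\|_{L^2_T H^{s+1}}\lesssim\|\phi\|_{H^s}$ from the energy identity and feeding it through Gronwall in its integrated ($L^2_t$) form.
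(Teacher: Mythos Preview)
Your approach is essentially sound but differs substantially from the paper's. The paper does not attempt an $H^{s-1}$ energy estimate on $w=u^\eta-u^0$; instead it proves that $(u^\eta)_{\eta>0}$ is \emph{Cauchy in $C([0,T];L^2)$} by estimating $\|u^{\eta_1}-u^{\eta_2}\|_{L^2}$ directly (the forcing term in $L^2$ only costs two derivatives on $u^{\eta_2}$, which the uniform $H^s$ bound controls, at least for $s\ge 2$). From $L^2$ convergence plus the uniform $H^s$ bound it extracts a weak $H^s$ limit $u^0\in C_w([0,T];H^s)$, shows $u^0$ satisfies BO weakly, and then upgrades to $u^0\in C([0,T];H^s)$ by the classical trick: at $t=0^+$ one has $\|u^0(t)\|_{H^s}\le\rho(t)\to\|\phi\|_{H^s}$, so weak convergence plus norm convergence gives strong convergence, and time reversibility of BO handles left continuity. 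Thus the paper only proves that \emph{the limit lies in} $C([0,T];H^s)$, not that the convergence $u^\eta\to u^0$ takes place in that topology.

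Your route---parabolic gain $\eta^{1/2}\|u^\eta\|_{L^2_TH^{s+1}}\lesssim 1$ to close the $H^{s-1}$ difference estimate, then Bona--Smith to reach $H^s$---is more laborious but yields the stronger conclusion of strong $C([0,T];H^s)$ convergence, and it works down to $s>3/2$ without needing $s\ge 2$. One point to flag: in the Bona--Smith step you invoke ``continuity of the data-to-solution map for CL is part of Theorem~\ref{mainresult}'', but the constants in that theorem depend on $\eta$ (the contraction is run on spaces built from the dissipative semigroup). For your three-$\varepsilon$ argument you need the modulus of continuity to be \emph{uniform in $\eta\in(0,1]$}. This is true, but it has to be proved separately by an $H^s$ energy estimate on $u^\eta-u^\eta_\delta$ in the quasilinear style (Kato commutator plus the harmless dissipative contribution $\eta\int(|\xi|-\xi^2)\langle\xi\rangle^{2s}|\hat v|^2\le\tfrac{\eta}{4}\|v\|_{H^s}^2$), not read off from Theorem~\ref{mainresult}.
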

Finally, we state the result about decay properties of the solution of initial value problem CL which provide a theoretical prove of the numerical result posed in \cite{FeKa}.
\begin{theorem}\label{decaida}
Let $\eta >0$ be fixed and let $T>0$. Assume that $u\in C([0,T],\mathcal{F}_{3,3}(\mathbb{R}))$ is the solution of (\ref{cl}). Then $u(t)=0$, for all $t\in [0,T]$.
\end{theorem}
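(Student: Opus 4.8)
The plan is to pass to the Fourier transform in $x$ and to read off, at the frequency $\xi=0$, the rigidity forced by the non-smoothness of the symbol there. The decisive structural fact is that the ``instability'' term $\eta\mathcal{H}u_x$ contributes $+\eta|\xi|$ to the real part of the linear symbol, so that near $\xi=0$ the semigroup symbol $m(\xi,t):=e^{(iq(\xi)-p(\xi))t}$ behaves like $e^{\eta t|\xi|}=1+\eta t|\xi|+O(\xi^2)$: it has a genuine corner at the origin whose strength is proportional to $\eta t$. Expanding the exponential, I would check that near $\xi=0$
\[
m(\xi,t)=R(\xi,t)+\eta t\,|\xi|+i\beta t\,\xi|\xi|,
\]
where $R(\cdot,t)$ is of class $C^2$ near the origin (the smooth part together with remainder terms of the type $|\xi|^3$, $\xi^3|\xi|$, etc.). Consequently the distributional third derivative $\partial_\xi^3 m(\cdot,t)$ carries the atomic part $2\eta t\,\delta_0'+4i\beta t\,\delta_0$ at $\xi=0$.

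First I would record the routine preliminaries. Since $u\in C([0,T];\mathcal{F}_{3,3})$, $H^3(\mathbb{R})$ is a Banach algebra, and $H^3(\mathbb{R})\hookrightarrow L^\infty(\mathbb{R})$, one gets $u^2\in C([0,T];\mathcal{F}_{3,3})$ as well; in the Fourier variable, $\widehat{u}(\cdot,t)$ and $\widehat{u^2}(\cdot,t)$ belong to $H^3(\mathbb{R}_\xi)\hookrightarrow C^2(\mathbb{R}_\xi)$ for every $t$, and $u$ (being $H^3$) is a strong solution, so Duhamel's formula
\[
\widehat{u}(\xi,t)=m(\xi,t)\,\widehat{\phi}(\xi)-\frac{i}{2}\int_0^t m(\xi,t-s)\,\xi\,\widehat{u^2}(\xi,s)\,ds,\qquad \phi:=u(0),
\]
holds. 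In particular $\widehat{u}(0,t)=\widehat{\phi}(0)$ for all $t$, and $\widehat{u^2}(0,s)=\|u(s)\|_{L^2}^2$.

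Next comes the core step: I would apply $\partial_\xi^3$ to Duhamel's identity and isolate the atomic part at $\xi=0$. Since $\widehat{u}(\cdot,t)\in H^3(\mathbb{R}_\xi)$, its distributional derivatives of order $\le 3$ are honest $L^2$ functions, so the right-hand side may carry no $\delta_0$ or $\delta_0'$. Using $\partial_\xi(|\xi|\,g)=\text{sgn}(\xi)\,g+|\xi|\,g'$ and iterating, one finds that for any $g$ of class $C^2$ near the origin,
\[
\partial_\xi^3\bigl(m(\xi,\tau)\,g(\xi)\bigr)\ \ \text{has atomic part}\ \ 2\eta\tau\,g(0)\,\delta_0'+4\tau\bigl(\eta\,g'(0)+i\beta\,g(0)\bigr)\delta_0.
\]
Applying this with $g=\widehat{\phi}$, $\tau=t$ in the linear term, and with $g(\xi)=\xi\,\widehat{u^2}(\xi,s)$ (so $g(0)=0$, $g'(0)=\widehat{u^2}(0,s)=\|u(s)\|_{L^2}^2$), $\tau=t-s$ in the integral term, and then integrating in $s$, the atomic part of $\partial_\xi^3\widehat{u}(\cdot,t)$ equals
\[
2\eta t\,\widehat{\phi}(0)\,\delta_0'+\Bigl(4\eta t\,\partial_\xi\widehat{\phi}(0)+4i\beta t\,\widehat{\phi}(0)-2i\eta\int_0^t(t-s)\,\|u(s)\|_{L^2}^2\,ds\Bigr)\delta_0.
\]
Vanishing of the $\delta_0'$ coefficient forces $\widehat{\phi}(0)=0$ (take any $t>0$; here $\eta>0$ is used), i.e. $\int_{\mathbb{R}}u(x,t)\,dx=0$ for all $t$; this is the analogue of Iorio's condition for Benjamin--Ono. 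Vanishing of the $\delta_0$ coefficient then yields
\[
2t\,\partial_\xi\widehat{\phi}(0)=i\int_0^t(t-s)\,\|u(s)\|_{L^2}^2\,ds\qquad\text{for all }t\in[0,T].
\]

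It remains to differentiate this identity twice in $t$: the left-hand side is affine in $t$, whereas $t\mapsto\int_0^t(t-s)\|u(s)\|_{L^2}^2\,ds$ is of class $C^2$ with second derivative $\|u(t)\|_{L^2}^2$ (because $s\mapsto\|u(s)\|_{L^2}^2$ is continuous). Hence $\|u(t)\|_{L^2}^2=0$ for $t\in(0,T)$, and by continuity of $t\mapsto u(t)$ into $\mathcal{F}_{3,3}\hookrightarrow L^2(\mathbb{R})$ we conclude $u(t)=0$ for every $t\in[0,T]$. The step I expect to be the main obstacle is the rigorous distributional calculus at $\xi=0$: precisely pinning down the singular structure of $m(\cdot,t)$ (that everything beyond $\eta t|\xi|$ and $i\beta t\,\xi|\xi|$ is $C^2$ near the origin), justifying the interchange of $\partial_\xi^3$ with the time integral in Duhamel's formula, and verifying that the weighted-Sobolev bookkeeping (each term of the equation in the appropriate $L^2$-based space, $\widehat{u^2}(\cdot,s)\in H^3(\mathbb{R}_\xi)$ and $\xi\,\widehat{u^2}(\cdot,s)\in H^3(\mathbb{R}_\xi)$ uniformly on $[0,T]$, etc.) goes through with only $\mathcal{F}_{3,3}$ regularity. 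The dispersion plays no role: the term $4i\beta t\,\widehat{\phi}(0)\,\delta_0$ drops out once $\widehat{\phi}(0)=0$, so the whole mechanism is powered by $\eta>0$.
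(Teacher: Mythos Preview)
Your proposal is correct and follows essentially the same route as the paper: both arguments compute the distributional third $\xi$-derivative of the Duhamel representation, isolate the atomic contributions $2\eta\tau\,\delta_0'+4i\beta\tau\,\delta_0$ coming from the corner of the symbol $m(\cdot,\tau)$ at $\xi=0$ (this is the content of the paper's Lemma~6.1), use $u(t)\in L^2_3$ to force these atoms to vanish, and thereby obtain an identity of the type $\int_0^t(t-s)\|u(s)\|_{L^2}^2\,ds=0$, from which $u\equiv0$ follows.

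The only organizational difference is that the paper first works directly with the PDE (multiplying by $x^2$ and $x^3$) to derive $\widehat{u}(t,0)=\partial_\xi\widehat{u}(t,0)=0$ for all $t$, and \emph{then} turns to the Duhamel formula with these facts already in hand; you instead extract everything in one pass from Duhamel, getting $\widehat{\phi}(0)=0$ from the $\delta_0'$ coefficient and then killing the residual linear term $4\eta t\,\partial_\xi\widehat{\phi}(0)$ by differentiating twice in $t$. This is a mild streamlining, not a different idea. The technical point you flag (justifying Leibniz for $\partial_\xi^3(m\,g)$ when $m$ has distributional derivatives and $g$ is only $C^2\cap H^3$) is exactly the place where the paper relies on the explicit formulas of Lemma~6.1; your sketch of the singular structure of $m$ is equivalent to that lemma.
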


The layout of this paper is organized as follows: In Section $2$ we revisit the proof of the Theorem \ref{mainresult} given in \cite{PR}. Section $3$ is devoted to give a proof of Theorems \ref{malpuestodos} and \ref{malpuestoCLND} which say us that CL and CLND are ill-posed in $H^s(\mathbb{R})$ for $s<-1/2$. Section $4$ presents the study of the behavior of the solutions of the Cheen-Lee equation when the dispersive parameter $\beta$ tends to zero, we will give a proof of the Theorem \ref{convergeCLND}, and Section $5$ is dedicated to the study of the convergence of the solutions of the Cauchy problem CL to solutions of the Cauchy problem associated to the BO equation, we will give a proof of the Theorem \ref{convergeBO}. Finally, Section $6$ is devoted to study decay properties of the solution of initial value problem CL and a proof of the Theorem \ref{decaida}.
\setcounter{equation}{0}
\section{Theory in $H^s(\mathbb{R})$ with $s>-1/2$}

We recall that Theorem \ref{mainresult} was already proved by Pastr\'an in \cite{P} but here we present a different proof using the dissipative methods of Dix \cite{Dix} (see \cite{Amin, Duque, Pilod} for similar results). In fact, we revisit the proof of the Theorem \ref{mainresult} given in \cite{PR}. The main idea is to construct a contraction with the integral formulation of \eqref{cl}
\begin{equation}\label{intequation}
u(t)=S(t)\phi - \int_0^tS(t-t')[u(t')u_x(t')]\,dt' ,\quad t\geq 0,
\end{equation}
defined on an appropriated Banach space $\mathfrak{X}_T^s$, when $s>-\frac{1}{2}$ and $0<T\leq 1$. We introduce $\mathfrak{X}_T^s$ in order to deduce the crucial linear and bilinear estimates which are an adaptation, made by Esfahani \cite{Amin} and Duque \cite{Duque}, of the spaces originally presented by Dix in \cite{Dix} for the dissipative Burgers equation. For $s<0$ and $0\leq T\leq 1$, we define 
$$X_T^s=\left\{u\in C\left([0,T];H^s(\mathbb{R})\right)\, : \, \left\|u\right\|_{X_T^s} < \infty \right\},$$ 
where
\begin{equation}
\left\|u\right\|_{X_T^s}:=\sup_{t\in(0,T]}\left(\left\|u(t)\right\|_{H^s}+t^{|s|/2}\left\|u(t)\right\|_{L^2}\right).
\end{equation}

We start giving the following technical results.
\begin{prop} \label{semigroupregular}
Let $\lambda \geq 0$ and $s\in \mathbb{R}$. Then, \\ \\
(a.) $S(t)\in \textbf{B}(H^s(\mathbb{R}),H^{s+\lambda}(\mathbb{R}))$ for all $t>0$ and satisfies,
\begin{align}
\nor{S(t)\phi}{s+\lambda}\leq C_{\lambda}\,\bigr(e^{\eta t}+(\eta t)^{-\lambda /2}\bigl)\,\nor{\phi}{s}\;,  \label{regulariza}
\end{align}
where $\phi \in H^{s}(\mathbb{R})$ and $C_{\lambda}$ is a constant depending only on $\lambda$. Moreover, the map $t\to S(t)\phi$ belongs to $C((0,\infty),H^{s+\lambda}(\mathbb{R}))$.\\ \\
(b.) $S:[0,\infty)\longrightarrow \textbf{B}(H^s(\mathbb{R}))$ is a $C^0$-semigroup in $H^s(\mathbb{R})$. Moreover, for every $t\geq 0$,
\begin{equation}\label{linealsemigroupcota}
\nor{S(t)}{\textbf{B}(H^s)}\leq e^{\eta t}.
\end{equation}
\end{prop}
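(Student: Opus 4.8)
The plan is to treat $S(t)$ as a Fourier multiplier operator with symbol $m_t(\xi)=e^{i\,q(\xi)\,t-p(\xi)\,t}$, for which $|m_t(\xi)|=e^{-p(\xi)t}$ since $q$ is real-valued; thus the dispersive factor $e^{iq(\xi)t}$ plays no role in either estimate. By Plancherel's identity both assertions reduce to pointwise bounds on the symbol: for (a),
\[
\nora{S(t)\phi}{s+\lambda}{2}\leq \Bigl(\sup_{\xi\in\mathbb{R}}\langle\xi\rangle^{2\lambda}e^{-2p(\xi)t}\Bigr)\nora{\phi}{s}{2},
\]
and similarly for (b) with $\lambda=0$. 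So the entire proof rests on understanding the scalar quantity $\langle\xi\rangle^{\lambda}e^{-p(\xi)t}$.

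The key analytic observation is the structure of $p(\xi)=\eta(\xi^2-|\xi|)=\eta\,|\xi|(|\xi|-1)$. First, $p$ is bounded below: minimizing $\xi^2-|\xi|$ gives the value $-1/4$ at $|\xi|=1/2$, so $p(\xi)\geq -\eta/4$ and hence $e^{-p(\xi)t}\leq e^{\eta t/4}$ for every $\xi$ and every $t\geq 0$. Taking $\lambda=0$ immediately yields $\nor{S(t)}{\textbf{B}(H^s)}\leq e^{\eta t/4}\leq e^{\eta t}$, which is the bound \eqref{linealsemigroupcota} in (b). For the smoothing estimate I would split the supremum into low and high frequencies. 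On $|\xi|\leq 2$ the weight $\langle\xi\rangle^{\lambda}$ is bounded by a constant depending only on $\lambda$ and $e^{-p(\xi)t}\leq e^{\eta t/4}$, contributing $\lesssim_{\lambda}e^{\eta t}$. On $|\xi|>2$ one has $|\xi|-1\geq|\xi|/2$, so $p(\xi)\geq \eta\xi^2/2$, while $\langle\xi\rangle^{\lambda}\lesssim_{\lambda}|\xi|^{\lambda}$; the contribution is then controlled by $\sup_{|\xi|>2}|\xi|^{\lambda}e^{-\eta\xi^2 t/2}$. Optimizing the one-variable function $u\mapsto u^{\lambda/2}e^{-\eta u t/2}$ (maximum at $u=\lambda/(\eta t)$) bounds this by $C_{\lambda}(\eta t)^{-\lambda/2}$. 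Combining the two regions gives $\sup_{\xi}\langle\xi\rangle^{\lambda}e^{-p(\xi)t}\lesssim_{\lambda}e^{\eta t}+(\eta t)^{-\lambda/2}$, which is exactly \eqref{regulariza} after taking square roots.

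Next I would establish the $C^0$-semigroup structure. The identity $S(t)S(t')=S(t+t')$ is immediate because the exponent $i\,q(\xi)-p(\xi)$ is linear in $t$, so $m_t(\xi)m_{t'}(\xi)=m_{t+t'}(\xi)$, and $S(0)=\mathrm{Id}$. For strong continuity at $t=0$, i.e. $S(t)\phi\to\phi$ in $H^s$, I would apply dominated convergence in the Plancherel integral: $m_t(\xi)\to 1$ pointwise as $t\to 0^+$, while for $t$ in a compact interval $[0,T_0]$ the integrand is dominated by $e^{\eta T_0/2}\langle\xi\rangle^{2s}|\Hat{\phi}(\xi)|^2$, which is integrable. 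The same dominated-convergence argument, now using \eqref{regulariza} to produce a $t$-uniform majorant on compact subsets of $(0,\infty)$, shows $t\mapsto S(t)\phi$ is continuous into $H^{s+\lambda}(\mathbb{R})$ on $(0,\infty)$.

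The only mildly delicate point is the instability contribution $-\eta|\xi|$ in $p$, which prevents treating $S(t)$ as a purely parabolic smoothing semigroup: $p$ changes sign and is negative for $|\xi|<1$. The resolution is the two observations above, namely that $p$ is nonetheless bounded below (giving the harmless factor $e^{\eta t}$ that governs the low frequencies) and that the dissipative term $\eta\xi^2$ dominates the instability at large $|\xi|$ (giving the genuine gain $(\eta t)^{-\lambda/2}$). Everything else is an elementary optimization of scalar functions and a routine dominated-convergence argument.
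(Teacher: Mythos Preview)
The paper states this proposition as a preliminary technical result but gives no proof for it; the text passes directly from the statement to Lemma~\ref{LemmaLB1}. (The subsequent Lemma~\ref{LemmaLB1} and Proposition~\ref{PropLB1} establish related multiplier bounds, with a different functional form $f_\lambda(t)$, but are not presented as a proof of Proposition~\ref{semigroupregular}.)

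Your argument is correct and is the standard verification. Reducing via Plancherel to a pointwise symbol bound, using the elementary lower bound $p(\xi)\geq -\eta/4$ to control low frequencies and the inequality $p(\xi)\geq \tfrac{\eta}{2}\xi^2$ for $|\xi|>2$ to extract the parabolic gain $(\eta t)^{-\lambda/2}$ at high frequencies, and then appealing to dominated convergence for the continuity statements, is exactly how one proves such smoothing estimates. There is no gap.
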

\begin{lemma}\label{LemmaLB1}
Let $\lambda>0$, $\eta>0$ and $t>0$ be given. Then
$$\left\||t\xi^2|^{\lambda} e^{\eta\left(|\xi|-\xi^2 \right)t}\right\|_{L^{\infty}(\mathbb{R})}\lesssim_{\lambda} f_{\lambda}(t),$$
where
\begin{equation}\label{LBequa1}
f_{\lambda}(t)=\left(t^{\lambda}+\eta^{-\lambda}\right) e^{\frac{\eta}{8}\left(t+t^{\frac{1}{2}}\sqrt{t+\frac{16\lambda}{\eta}}\right)},
\end{equation}
is a nondecreasing function defined for all $t>0$.
\end{lemma}

\begin{proof}
For all $\xi \in \mathbb{R}$ we have that
$$|t\xi^2|^{\lambda} e^{\eta\left(|\xi|-\xi^2 \right)t}\leq  \sup_{x\in \mathbb{R}} |x|^{2\lambda} e^{\eta\left(|x|t^{1/2}-x^2 \right)}.$$
Let $w_t(x)= x^{2\lambda} e^{\eta\left(xt^{1/2}-x^2 \right)}$, for all $x\geq 0$. Note that $w_t(x)$ tends to $0$ as $x\to \infty$, and  
$$w_t'(x_1)=0 \, \Longleftrightarrow \,  x_{1}=\frac{1}{4}\left(t^{\frac{1}{2}}+\sqrt{t+\frac{16\lambda}{\eta}}\right).$$
Therefore, the maximum of $w_t$ is attained in $x_1$ and we can deduce that
$$w_t(x_1)\lesssim_{\lambda} \left(t^{\lambda}+\eta^{-\lambda}\right) e^{\frac{\eta}{8}\left(t+t^{\frac{1}{2}}\sqrt{t+\frac{16\lambda}{\eta}}\right)}.$$
This inequality completes the proof.
\end{proof}
From Lemma \ref{LemmaLB1} and arguing as in Proposition 1 in \cite{Pilod}, it is easy to deduce the next proposition.

\begin{prop}\label{PropLB1}
Let $\eta>0$, $s \in \mathbb{R}$, $T>0$ and $\phi\in H^s(\mathbb{R})$, then it follows that
\begin{equation}
\sup_{t\in[0,T]}\left\|S(t)\phi\right\|_{H^s}\leq e^{\frac{\eta}{4}T} \left\|\phi\right\|_{H^s}.
\end{equation}
And when $0< T\leq 1$ and $s< 0$,
\begin{equation}
\sup_{t\in[0,T]}t^{\frac{|s|}{2}}\left\|S(t)\phi\right\|_{L^2}\lesssim_{s}g_{s,\eta}(T)\left\|\phi\right\|_{H^s},
\end{equation}
where $$g_{s,\eta}(t)=e^{\frac{\eta t}{4}}+ \left(t^{\frac{|s|}{2}}+\eta^{-\frac{|s|}{2}}\right) e^{\frac{\eta}{8}\left(t+t^{\frac{1}{2}}\sqrt{t+\frac{8|s|}{\eta}}\right)},$$
is a nondecreasing function on $[0,1]$.
\end{prop}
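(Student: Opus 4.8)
The plan is to work entirely on the Fourier side and reduce both bounds to an $L^\infty_\xi$ estimate on the multiplier of $S(t)$. By \eqref{semigrupos}, $\widehat{S(t)\phi}(\xi)=e^{iq(\xi)t-p(\xi)t}\widehat{\phi}(\xi)$, so $|\widehat{S(t)\phi}(\xi)|=e^{-p(\xi)t}|\widehat{\phi}(\xi)|=e^{\eta(|\xi|-\xi^2)t}|\widehat{\phi}(\xi)|$, and by Plancherel's theorem the two inequalities become statements about the weight $e^{\eta(|\xi|-\xi^2)t}$ tested against powers of $\langle\xi\rangle$. For the first estimate I would use the elementary fact that $|\xi|-\xi^2\le\tfrac14$ for every $\xi\in\mathbb{R}$ (the function $x\mapsto x-x^2$ attains its maximum $\tfrac14$ at $x=\tfrac12$), so that $e^{2\eta(|\xi|-\xi^2)t}\le e^{\eta t/2}$ and consequently
\begin{equation*}
\|S(t)\phi\|_{H^s}^2=\int_{\mathbb{R}}\langle\xi\rangle^{2s}e^{2\eta(|\xi|-\xi^2)t}|\widehat{\phi}(\xi)|^2\,d\xi\le e^{\eta t/2}\|\phi\|_{H^s}^2 ;
\end{equation*}
taking square roots and the supremum over $t\in[0,T]$ gives the first claim.

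For the second estimate (where $s<0$ and $0<T\le1$), I would pull the weight $\langle\xi\rangle^{2s}$ out of the $L^2$ integral and control what remains in $L^\infty_\xi$. Writing $|\widehat{\phi}|^2=\langle\xi\rangle^{-2s}\langle\xi\rangle^{2s}|\widehat{\phi}|^2$ and using $\|fg\|_{L^1}\le\|f\|_{L^\infty}\|g\|_{L^1}$, one obtains
\begin{equation*}
t^{|s|/2}\|S(t)\phi\|_{L^2}\le\bigl\|t^{|s|/2}\langle\xi\rangle^{|s|}e^{\eta(|\xi|-\xi^2)t}\bigr\|_{L^\infty_\xi}\,\|\phi\|_{H^s},
\end{equation*}
so the problem is reduced to bounding the displayed $L^\infty_\xi$-norm by $g_{s,\eta}(t)$.

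To estimate that $L^\infty_\xi$-norm I would use $\langle\xi\rangle^{|s|}\lesssim_s 1+|\xi|^{|s|}$ and split into two pieces. The piece coming from the $1$ is $t^{|s|/2}e^{\eta(|\xi|-\xi^2)t}\le t^{|s|/2}e^{\eta t/4}\le e^{\eta t/4}$, using $|\xi|-\xi^2\le\tfrac14$ and $t\le1$. The piece coming from $|\xi|^{|s|}$ is precisely $|t\xi^2|^{|s|/2}e^{\eta(|\xi|-\xi^2)t}$, whose $L^\infty_\xi$-norm is $\lesssim_s f_{|s|/2}(t)=\bigl(t^{|s|/2}+\eta^{-|s|/2}\bigr)e^{\frac{\eta}{8}(t+t^{1/2}\sqrt{t+8|s|/\eta})}$ by Lemma \ref{LemmaLB1} applied with $\lambda=|s|/2$. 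Adding the two pieces reproduces $g_{s,\eta}(t)$ exactly. Finally, to pass from $g_{s,\eta}(t)$ to $g_{s,\eta}(T)$ under the supremum, I would check that $g_{s,\eta}$ is nondecreasing on $[0,1]$: $e^{\eta t/4}$ is increasing, $t^{|s|/2}+\eta^{-|s|/2}$ is nondecreasing, and the exponent $t+t^{1/2}\sqrt{t+8|s|/\eta}=t+\sqrt{t^2+8|s|t/\eta}$ is increasing in $t$, so a product and a sum of nonnegative nondecreasing functions is nondecreasing.

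I do not anticipate a genuine obstacle: the argument is just Plancherel plus the pointwise bound of Lemma \ref{LemmaLB1}. The only thing demanding care is the bookkeeping of constants---choosing $\lambda=|s|/2$ so that the exponent $\tfrac{\eta}{8}(t+t^{1/2}\sqrt{t+16\lambda/\eta})$ furnished by Lemma \ref{LemmaLB1} matches the one appearing in $g_{s,\eta}$, and making sure the crude splitting $\langle\xi\rangle^{|s|}\lesssim1+|\xi|^{|s|}$ does not disturb the stated form of $g_{s,\eta}$. This absorption of constants into an implicit $\lesssim_s$ is what is meant in the text by ``arguing as in Proposition 1 in \cite{Pilod}''.
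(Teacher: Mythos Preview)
Your proposal is correct and follows exactly the route the paper intends: Plancherel together with the pointwise multiplier bound, reducing the second estimate to Lemma~\ref{LemmaLB1} with $\lambda=|s|/2$, which is precisely what is meant by ``From Lemma \ref{LemmaLB1} and arguing as in Proposition 1 in \cite{Pilod}''. The bookkeeping you flag (the split $\langle\xi\rangle^{|s|}\lesssim_s 1+|\xi|^{|s|}$ and the identification $16\lambda/\eta=8|s|/\eta$) is accurate and recovers $g_{s,\eta}$ on the nose.
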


Next, we establish the crucial bilinear estimates. 

\begin{prop}\label{PropLB2} Let $0\leq T\leq 1$ and $-\frac{1}{2}<s < 0$, then
\begin{equation}
\left\|\int_{0}^t S(t-t')\partial_x(uv)(t') \ dt'\right\|_{X_T^s} \lesssim_{s} e^{\frac{\eta T}{2}} T^{\frac{1+2s}{4}}\left\|u\right\|_{X_T^s}\left\|v\right\|_{X_T^s},
\end{equation}
for all $u,v\in X_T^s$.
\end{prop}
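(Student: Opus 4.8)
The plan is to estimate the Duhamel term directly in the $X_T^s$ norm, treating its two constituent pieces — the $H^s$-norm at time $t$ and the weighted $L^2$-norm $t^{|s|/2}\|\cdot\|_{L^2}$ — separately. In Fourier variables the object to control is
\begin{equation*}
\int_0^t e^{iq(\xi)(t-t')}e^{-p(\xi)(t-t')}\,i\xi\,\widehat{uv}(\xi,t')\,dt',
\end{equation*}
so the absolute value is bounded by $\int_0^t e^{-p(\xi)(t-t')}|\xi|\,|\widehat{uv}(\xi,t')|\,dt'$. The basic mechanism is that the dissipative factor $e^{-p(\xi)(t-t')}=e^{-\eta(\xi^2-|\xi|)(t-t')}$ supplies smoothing: multiplying by $\langle\xi\rangle^{s}$ and by $|\xi|$, one pulls out a power $(t-t')^{-\alpha}$ via Lemma~\ref{LemmaLB1} (with $\lambda$ chosen so that $|\xi|\langle\xi\rangle^{s}\lesssim (\text{dissipation})^{-\alpha}\langle\xi\rangle^{\text{something}\le 0}$ up to the growing exponential $f_\lambda$), at the cost of an integrable singularity in $t-t'$. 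I would first fix the bilinear input: by definition of $X_T^s$, $\|(uv)(t')\|_{L^1}\le\|u(t')\|_{L^2}\|v(t')\|_{L^2}\le (t')^{-|s|}\|u\|_{X_T^s}\|v\|_{X_T^s}$, hence $\|\widehat{uv}(t')\|_{L^\infty}\lesssim (t')^{-|s|}\|u\|_{X_T^s}\|v\|_{X_T^s}$; this is the only place the product structure enters, and it is why $s>-1/2$ is needed so that $|s|<1/2<1$ and the resulting time integrals converge.

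For the $H^s$ piece I would write, using $\|\langle\xi\rangle^s\,i\xi\,e^{-p(\xi)\tau}\|_{L^\infty_\xi}\lesssim \|\,|\tau^{1/2}\xi|\,e^{-p(\xi)\tau}\,\|_{L^\infty_\xi}\,\tau^{-1/2}\lesssim \tau^{-1/2}f_{1/2}(\tau)^{1/2}$-type bounds (the key point being that $s<0$ makes $\langle\xi\rangle^s\le 1$, so only the factor $|\xi|$ must be absorbed by the dissipation, costing $\tau^{-1/2}$), that
\begin{equation*}
\|\langle\xi\rangle^s\widehat{(\text{Duhamel})}(t)\|_{L^2_\xi}\lesssim \int_0^t (t-t')^{-1/2}e^{c\eta(t-t')}\,(t')^{-|s|}\,dt'\;\|u\|_{X_T^s}\|v\|_{X_T^s},
\end{equation*}
and the Beta-function integral $\int_0^t(t-t')^{-1/2}(t')^{-|s|}\,dt'=B(1/2,1-|s|)\,t^{1/2-|s|}$ is finite precisely when $|s|<1$, i.e. $s>-1$, and in fact gives the gain $t^{1/2-|s|}=t^{(1+2s)/2}$. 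Taking $\sup_{t\le T}$ and using $t\le T\le 1$ produces $T^{(1+2s)/2}$ times $e^{\eta T/2}$; one then checks $(1+2s)/2\ge (1+2s)/4$ for $T\le 1$, or simply aims for the weaker power $T^{(1+2s)/4}$ claimed in the statement, which is implied. For the weighted $L^2$ piece one repeats the computation with $\langle\xi\rangle^s$ replaced by $1$ and an extra factor $t^{|s|/2}$ out front; now the factor $|\xi|$ still costs $\tau^{-1/2}$ but there is no $\langle\xi\rangle^s$ to help, so one instead splits $|\xi|=|\xi|^{1/2}|\xi|^{1/2}$ and absorbs one half-power into the dissipation costing $\tau^{-1/4}$ and keeps... actually more cleanly: $\|\,i\xi\,e^{-p(\xi)\tau}\|_{L^\infty_\xi}\lesssim \tau^{-1/2}f_1(\tau)^{1/2}\cdot$(bounded), giving $\int_0^t(t-t')^{-1/2}(t')^{-|s|}dt'\lesssim t^{1/2-|s|}$ again, so $t^{|s|/2}$ times this is $t^{1/2-|s|/2}\le T^{1/2-|s|/2}=T^{(1+2s)/4}\cdot T^{?}$; one verifies $1/2-|s|/2\ge (1+2s)/4$ for $s\in(-1/2,0)$, which holds since $1/2-|s|/2-(1+2s)/4 = (1-|s|)/4\ge 0$. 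Thus both pieces carry at least the power $T^{(1+2s)/4}$ and the constant $e^{\eta T/2}$.

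The main obstacle — the part requiring the most care rather than the most cleverness — is the bookkeeping of the exponential growth factors $f_\lambda(t-t')$ coming from Lemma~\ref{LemmaLB1}: these are $(t-t')$-dependent and, although bounded on $[0,T]\subset[0,1]$ by a constant of the form $e^{c\eta T}$, one must verify they do not destroy the integrability of $(t-t')^{-1/2}$ (they do not, since $f_\lambda$ is bounded near $0$) and that the accumulated exponential can be consolidated into the single clean factor $e^{\eta T/2}$ stated — this is where the precise constants $\tfrac18$, $\tfrac14$ in $f_\lambda$ and in Proposition~\ref{PropLB1} were arranged. A secondary point is making sure the resulting function of $t$ is continuous up to $t=0$ with value $0$, so that the Duhamel term genuinely lies in $C([0,T];H^s)$ and hence in $X_T^s$; this follows from dominated convergence on the Beta-integral representation once the pointwise bounds are in hand. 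I would organize the write-up as: (i) the $L^1\to L^\infty$ bilinear bound; (ii) pointwise multiplier bounds for $\langle\xi\rangle^s i\xi e^{-p(\xi)\tau}$ via Lemma~\ref{LemmaLB1}; (iii) the two Beta-function time integrals; (iv) assembling the $X_T^s$ norm and cleaning up the $T$-powers and the exponential constant.
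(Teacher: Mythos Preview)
There is a genuine gap in the H\"older pairing that drives your estimate. To bound
\[
\Bigl\|\langle\xi\rangle^{s}\,i\xi\,e^{-p(\xi)(t-t')}\,\widehat{uv}(\xi,t')\Bigr\|_{L^2_\xi}
\]
you must place one factor in $L^2_\xi$ and the other in $L^\infty_\xi$. You take the \emph{multiplier} $\langle\xi\rangle^{s}i\xi e^{-p(\xi)\tau}$ in $L^\infty_\xi$ (costing $\tau^{-1/2}$) and simultaneously use the bilinear bound $\|\widehat{uv}(t')\|_{L^\infty_\xi}\lesssim (t')^{-|s|}\|u\|_{X_T^s}\|v\|_{X_T^s}$; but $L^\infty_\xi\cdot L^\infty_\xi$ does not control $L^2_\xi$. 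Nor can the roles be swapped: from $u(t'),v(t')\in L^2$ you cannot control $\|uv(t')\|_{L^2}=\|\widehat{uv}(t')\|_{L^2_\xi}$ (there is no $L^\infty$ or $L^4$ information available at regularity $s<0$). So the only workable split is multiplier in $L^2_\xi$, product in $L^\infty_\xi$.

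This is exactly what the paper does, and it changes the exponents. Using $\langle\xi\rangle^{s}\le|\xi|^{s}$ and the change of variables $w=\tau^{1/2}\xi$,
\[
\bigl\||\xi|^{1+s}e^{\eta(|\xi|-\xi^2)\tau}\bigr\|_{L^2_\xi}\ \lesssim_s\ e^{\eta T/2}\,\tau^{-(3/4+s/2)},
\]
so the time integral is $\int_0^t(t-t')^{-(3/4+s/2)}(t')^{-|s|}\,dt'\sim t^{(1+2s)/4}$, finite precisely when $-\tfrac12<s<0$ (the exponent $3/4+s/2\in(1/2,3/4)$ and $|s|<1/2$). For the $t^{|s|/2}\|\cdot\|_{L^2}$ piece the multiplier is $|\xi|e^{\eta(|\xi|-\xi^2)\tau}$ and its $L^2_\xi$ norm is $\lesssim e^{\eta T/2}\tau^{-3/4}$; the same Beta computation together with the prefactor $t^{|s|/2}$ again gives $T^{(1+2s)/4}$. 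Lemma~\ref{LemmaLB1} (an $L^\infty_\xi$ statement) is not the right tool here; the relevant computation is the direct $L^2_\xi$ bound via scaling.
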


\begin{proof}
Since $s< 0$, it follows that $\left\langle\xi \right\rangle^{s}\leq |\xi|^s$, for all real number $\xi$ different from zero. Then we deduce that
\begin{equation}\label{LBequa2}
\begin{aligned}
&  \left\|\int_{0}^t S(t-t')\partial_x(uv)(t') \ dt'\right\|_{H^s} \\
& \hspace{30pt} \leq  \int_{0}^t \left\|\left\langle \xi \right\rangle^s e^{\eta\left(|\xi|-\xi^2\right)(t-t')} \left(\partial_x(uv)(t')\right)^{\wedge}(\xi) \right\|_{L^2(\mathbb{R})} \ dt' \\
& \hspace{30pt} \leq  \int_{0}^t \left\||\xi|^{1+s}e^{\eta\left(|\xi|-\xi^2\right)(t-t')}\right\|_{L^2(\mathbb{R})}\left\| \widehat{u(t')}\ast \widehat{v(t')}(\xi) \right\|_{L^{\infty}(\mathbb{R})} \ dt'. 
\end{aligned}
\end{equation}
The Young inequality implies that
\begin{equation}\label{LBequa3}
\left\| \widehat{u(t')}\ast\widehat{v(t')}(\xi) \right\|_{L^{\infty}(\mathbb{R})}\leq \left( \frac{\left\|u\right\|_{X_T^s}\left\|v\right\|_{X_T^s}}{|t'|^{|s|}} \right),
\end{equation}
thus we obtain
\begin{equation}\label{LBequa4}
\begin{aligned}
\int_{0}^t & \left\| S(t-t')\partial_x(uv)(t') \right\|_{H^s}  \ dt'  \\
& \qquad \leq  \int_{0}^t \frac{\left\||\xi|^{1+s}e^{\eta\left(|\xi|-\xi^2\right)t}\right\|_{L^2(\mathbb{R})}}{|t-t'|^{|s|}} \, dt' \, \left\|u\right\|_{X_T^s}\left\|v\right\|_{X_T^s}. 
\end{aligned}
\end{equation}
To estimate the integral on the right-hand side of \eqref{LBequa4}, we perform the change of variables $w=t^{1/2}\xi$ to deduce 
\begin{align}\label{LBequa5}
\left\||\xi|^{1+s}e^{\eta\left(|\xi|-\xi^2\right)t}\right\|_{L^2(\mathbb{R})}& \leq \frac{\left\||w|^{1+s}e^{-\frac{\eta w^2}{2}}\right\|_{L^2(\mathbb{R})}\left\|e^{\eta(wt^{1/2}- \frac{w^2}{2})}\right\|_{L^{\infty}(\mathbb{R})}}{t^{\frac{3}{4}+\frac{s}{2}}} \nonumber \\
& \lesssim_{s} \frac{e^{\frac{\eta T}{2}}}{t^{\frac{3}{4}+\frac{s}{2}}}.
\end{align}
Therefore, we get from \eqref{LBequa4} and \eqref{LBequa5} that
\begin{equation}\label{LBequa6}
\begin{aligned}
& \left\|\int_{0}^t  S(t-t')\partial_x(uv)(t')  \ dt' \right\|_{H^s}  \\
& \hspace{20pt} \lesssim_s e^{\frac{\eta T}{2}}t^{\frac{1}{4}(1+2s)}\left( \int_{0}^1 \frac{1}{\sigma^{\frac{3}{4}+\frac{s}{2}}|1-\sigma|^{|s|}}\ d\sigma \right)\left\|u\right\|_{X_T^s}\left\|v\right\|_{X_T^s},   
\end{aligned}
\end{equation}
for all $0\leq t \le T$. On the other hand, arguing as above, we have for all $0\leq t\leq T$ that
\begin{align}\label{LBequa7}
 t^{|s|/2}&\left\|\int_{0}^t S(t-t')\partial_x(uv)(t') \ dt'\right\|_{L^2(\mathbb{R})} \nonumber \\
&\leq t^{|s|/2} \int_{0}^t \left\||\xi|e^{\eta\left(|\xi|-\xi^2\right)(t-t')}\right\|_{L^2(\mathbb{R})}\left\| \widehat{u(t')}\ast \widehat{v(t')}(\xi) \right\|_{L^{\infty}(\mathbb{R})} \ dt' \nonumber \\
&  \leq t^{|s|/2}\int_{0}^t \frac{\left\||\xi|e^{\eta\left(|\xi|-\xi^2\right)t}\right\|_{L^2(\mathbb{R})}}{|t-t'|^{|s|}} \, dt' \, \left\|u\right\|_{X_T^s}\left\|v\right\|_{X_T^s}  \nonumber \\
& \lesssim_s e^{\frac{\eta T}{2}} T^{\frac{1}{4}(1+2s)} \left( \int_{0}^1 \sigma^{-\frac{3}{4}}|1-\sigma|^{s}\, d\sigma \right)\left\|u\right\|_{X_T^s}\left\|v\right\|_{X_T^s}.
\end{align}
Combing \eqref{LBequa6} and \eqref{LBequa7} the proof is complete.
\end{proof}

\begin{rem}
If we consider $s'>s>-\frac{1}{2}$. Then modifying the space $X_{T}^{s'}$ by
$$\tilde{X}_{T}^{s'}=\left\{u\in X_{T}^{s'}: \left\|u\right\|_{\tilde{X}_{T}^{s'}}<\infty \right\},$$
where 
$$ \left\|u\right\|_{\tilde{X}_{T}^{s'}}= \left\|u\right\|_{X_{T}^{s'}}+t^{|s|/2} \left\|(1-\partial_x^2)^{\frac{ s'-s}{2}} u\right\|_{L^2}$$
and using that
$$(1+\xi^2)^{s/2}\lesssim (1+\xi^2)^{s/2}(1+\xi_1^2)^{(s'-s)/2}+(1+\xi^2)^{s/2}\left(1+(\xi-\xi_1)^2\right)^{(s'-s)/2},$$
for all $\xi,\xi_1\in \mathbb{R}$, we deduce arguing as in Proposition \eqref{PropLB2} that
$$\left\|\int_{0}^t S(t-t')\partial_x(uv)(t') \ dt'\right\|_{\tilde{X}_T^{s'}} \lesssim_{s}e^{\frac{\eta T}{2}} T^{\frac{1+2s}{4}}\left(\left\|u\right\|_{\tilde{X}_T^{s'}}\left\|v\right\|_{X_T^s}+\left\|u\right\|_{X_T^s}\left\|v\right\|_{\tilde{X}_T^{s'}}\right).$$
\end{rem}

\begin{prop}\label{PropLB3}
Let $0\leq T \leq 1$, $s \in (-\frac{1}{2},0)$ and $\delta\in [0,s+\frac{1}{2})$, then the application 
$$t\rightarrow \int_{0}^t S(t-t')\partial_x(u^2)(t')\ dt' ,$$
is in $C\left((0,T];H^{s+\delta}(\mathbb{R})\right)$, for every $u\in X_T^s$.
\end{prop}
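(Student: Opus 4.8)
The plan is to prove the statement in two movements: an \emph{a priori} estimate showing that for each fixed $t\in(0,T]$ the Duhamel term
\[
F(t):=\int_0^t S(t-t')\,\partial_x(u^2)(t')\,dt'
\]
belongs to $H^{s+\delta}(\mathbb{R})$, with a bound that is locally uniform in $t$ on $(0,T]$, and then a continuity argument in $t$ built on the semigroup law for $S$. Write $g(t'):=\partial_x(u^2)(t')$; since $u\in X_T^s$ and $s<0$ we have $u(t')\in L^2(\mathbb{R})$ for $t'>0$ with $\|u(t')\|_{L^2}\le (t')^{-|s|/2}\|u\|_{X_T^s}$, so $u(t')^2\in L^1(\mathbb{R})$ and $\widehat{g(t')}(\xi)=i\xi\,\bigl(\widehat{u(t')}\ast\widehat{u(t')}\bigr)(\xi)$.

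First I would establish the smoothing estimate. For $0<t'<t\le T\le 1$, using \eqref{semigrupos}, the elementary pointwise bound $\langle\xi\rangle^{s+\delta}|\xi|\lesssim |\xi|+|\xi|^{1+s+\delta}$ ($\xi\neq0$), and Young's inequality in the form $\|\widehat{u(t')}\ast\widehat{u(t')}\|_{L^\infty}\le\|u(t')\|_{L^2}^2\le (t')^{-|s|}\|u\|_{X_T^s}^2$, one bounds $\|S(t-t')g(t')\|_{H^{s+\delta}}$ by
\[
\Bigl(\bigl\||\xi|e^{\eta(|\xi|-\xi^2)(t-t')}\bigr\|_{L^2_\xi}+\bigl\||\xi|^{1+s+\delta}e^{\eta(|\xi|-\xi^2)(t-t')}\bigr\|_{L^2_\xi}\Bigr)(t')^{-|s|}\|u\|_{X_T^s}^2 .
\]
Rescaling $w=(t-t')^{1/2}\xi$ and peeling off a Gaussian exactly as in \eqref{LBequa5} (this only uses $s+\delta>-3/2$, automatic since $s>-\tfrac12$), each $L^2_\xi$ norm is $\lesssim_{s,\delta,\eta} e^{\eta T/2}(t-t')^{-\alpha}$ with $\alpha:=\tfrac34+\max\{0,\tfrac{s+\delta}{2}\}$; the decisive point is that $\delta<s+\tfrac12$ together with $s<0$ forces $\alpha<1$. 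Hence
\[
\|S(t-t')g(t')\|_{H^{s+\delta}}\lesssim_{s,\delta,\eta} e^{\eta T/2}\,(t-t')^{-\alpha}\,(t')^{-|s|}\,\|u\|_{X_T^s}^2,\qquad 0<t'<t\le T .
\]
Because $|s|<1$ and $\alpha<1$, the kernel $(t-t')^{-\alpha}(t')^{-|s|}$ is integrable over $t'\in(0,t)$, with $\int_0^t(t-t')^{-\alpha}(t')^{-|s|}\,dt'=C_{s,\delta}\,t^{1-\alpha-|s|}$; after a routine check that $t'\mapsto S(t-t')g(t')$ is strongly measurable into $H^{s+\delta}(\mathbb{R})$ (using that $u:[0,T]\to H^s$ is continuous and $t'\mapsto\|u(t')\|_{L^2}$ is locally bounded on $(0,T]$, via Pettis' theorem), the Bochner integral $F(t)$ converges absolutely in $H^{s+\delta}(\mathbb{R})$ and $\|F(t)\|_{H^{s+\delta}}\lesssim_{s,\delta,\eta} e^{\eta T/2}t^{1-\alpha-|s|}\|u\|_{X_T^s}^2$. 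In particular $F(a)\in H^{s+\delta}(\mathbb{R})$ for each $a\in(0,T]$ and $\sup_{t\in[a,T]}\|F(t)\|_{H^{s+\delta}}<\infty$.

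For the continuity on $(0,T]$ I would fix $t_0\in(0,T]$, choose $a\in(0,t_0)$, and use the semigroup law $S(t-t')=S(t-a)S(a-t')$ (valid for $0\le t'\le a$) to write, for $t\in[a,T]$,
\[
F(t)=S(t-a)F(a)+G_a(t),\qquad G_a(t):=\int_a^t S(t-t')g(t')\,dt' .
\]
The estimate above, applied on $[a,t]$ where now $(t')^{-|s|}\le a^{-|s|}$, gives $\|G_a(t)\|_{H^{s+\delta}}\lesssim_{s,\delta,\eta} e^{\eta T/2}a^{-|s|}(t-a)^{1-\alpha}\|u\|_{X_T^s}^2$, which tends to $0$ as $a\to t_0^-$, uniformly over $\{t:|t-t_0|<t_0-a\}\subset[a,T]$. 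On the other hand $F(a)$ is a single fixed element of $H^{s+\delta}(\mathbb{R})$, and by Proposition \ref{semigroupregular}(b) (with $s$ replaced by $s+\delta$) $\{S(\tau)\}_{\tau\ge0}$ is a $C^0$-semigroup on $H^{s+\delta}(\mathbb{R})$, so $t\mapsto S(t-a)F(a)$ is continuous from $[a,T]$ into $H^{s+\delta}(\mathbb{R})$. Writing $F(t)-F(t_0)=\bigl(S(t-a)-S(t_0-a)\bigr)F(a)+G_a(t)-G_a(t_0)$, an $\varepsilon/3$ argument — first fix $a$ close to $t_0$ so the $G_a$ contributions are small for all $t$ with $|t-t_0|<t_0-a$, then shrink $|t-t_0|$ using continuity of $t\mapsto S(t-a)F(a)$ — yields continuity of $F$ at $t_0$. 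Since $t_0\in(0,T]$ was arbitrary, $F\in C\bigl((0,T];H^{s+\delta}(\mathbb{R})\bigr)$.

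I expect the only genuinely delicate point to be this last step. The naive decomposition $F(t)-F(t_0)=\bigl(S(|t-t_0|)-I\bigr)F(\min\{t,t_0\})+(\text{short-time integral})$ is circular: $\min\{t,t_0\}$ varies, and a $C^0$-semigroup need not act uniformly continuously on the orbit $\{F(t)\}$, which is not known to be relatively compact in $H^{s+\delta}(\mathbb{R})$. Freezing the lower limit at a fixed $a<t_0$ through the semigroup law — so that the non-vanishing term is $S(t-a)$ applied to the \emph{single} vector $F(a)$ — is exactly what removes this difficulty, with no compactness input. The rest (the rescaling bounds for the $L^2_\xi$ norms and the Beta integral) is routine and parallels the proofs of Proposition \ref{PropLB2}, \eqref{LBequa5}, and Lemma \ref{LemmaLB1}.
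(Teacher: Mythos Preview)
Your proof is correct. The paper itself gives no argument here, deferring entirely to Proposition~4 of \cite{Pilod}; your route---a smoothing estimate yielding an integrable singularity $(t-t')^{-\alpha}(t')^{-|s|}$ with $\alpha<1$, followed by continuity via the decomposition $F(t)=S(t-a)F(a)+G_a(t)$ with a frozen base point $a<t_0$ and the $C^0$-semigroup property on $H^{s+\delta}$---is the standard one and is essentially what that reference does.
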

\begin{proof}
The proof of this proposition is similar to the proof of Proposition 4 in \cite{Pilod}. 
\end{proof}

The next lemma will enable us to estimate the term $\partial_x(uv)$ when $u,v\in C\left([0,T];H^s(\mathbb{R})\right)$, with $s\geq 0$.

\begin{lemma}\label{LemmaLB2}
Suppose $a>0$, $r\geq 0$ are real numbers and $\phi,\psi \in H^r(\mathbb{R})$. Then
\begin{equation}
\left\|\left\langle a\xi \right\rangle^r(\phi \psi)^{\wedge}(\xi)\right\|_{L^{\infty}(\mathbb{R})}\leq 2^{\frac{r}{2}} \left\|\left\langle a\xi \right\rangle^{r}\widehat{\phi}(\xi)\right\|_{L^2(\mathbb{R})}\left\|\left\langle a\xi \right\rangle^{r}\widehat{\psi}(\xi)\right\|_{L^2(\mathbb{R})}.
\end{equation}
\end{lemma}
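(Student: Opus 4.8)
The plan is to reduce everything to the elementary inequality
$$\langle a\xi\rangle^r \lesssim_r \langle a\xi_1\rangle^r + \langle a(\xi-\xi_1)\rangle^r,$$
which is the Peetre-type inequality adapted to the dilation by $a$. In fact one gets the sharp constant $2^{r/2}$ by writing, for any $\xi=\xi_1+(\xi-\xi_1)$, $1+a^2\xi^2 \le 1 + 2a^2\xi_1^2 + 2a^2(\xi-\xi_1)^2 \le 2(1+a^2\xi_1^2)(1+a^2(\xi-\xi_1)^2)$, so that raising to the power $r/2\ge 0$ yields $\langle a\xi\rangle^r \le 2^{r/2}\langle a\xi_1\rangle^r\langle a(\xi-\xi_1)\rangle^r$. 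This is the only genuinely nontrivial inequality, and it is immediate; the rest is bookkeeping.

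Next I would write out $(\phi\psi)^{\wedge}(\xi) = (\widehat\phi * \widehat\psi)(\xi) = \int_{\mathbb R} \widehat\phi(\xi_1)\widehat\psi(\xi-\xi_1)\,d\xi_1$, multiply by $\langle a\xi\rangle^r$ and insert the pointwise bound above to obtain
$$\left|\langle a\xi\rangle^r(\phi\psi)^{\wedge}(\xi)\right| \le 2^{r/2}\int_{\mathbb R}\left|\langle a\xi_1\rangle^r\widehat\phi(\xi_1)\right|\,\left|\langle a(\xi-\xi_1)\rangle^r\widehat\psi(\xi-\xi_1)\right|\,d\xi_1.$$
The right-hand side is a convolution of the two $L^2(\mathbb R)$ functions $g(\cdot):=|\langle a\,\cdot\,\rangle^r\widehat\phi(\cdot)|$ and $h(\cdot):=|\langle a\,\cdot\,\rangle^r\widehat\psi(\cdot)|$, evaluated at $\xi$. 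Taking the supremum over $\xi$ and applying Young's inequality (or simply Cauchy--Schwarz in $\xi_1$, which gives exactly the same thing: $\|g*h\|_{L^\infty}\le \|g\|_{L^2}\|h\|_{L^2}$) we conclude
$$\left\|\langle a\xi\rangle^r(\phi\psi)^{\wedge}(\xi)\right\|_{L^\infty(\mathbb R)} \le 2^{r/2}\,\|g\|_{L^2(\mathbb R)}\|h\|_{L^2(\mathbb R)} = 2^{r/2}\left\|\langle a\xi\rangle^r\widehat\phi\right\|_{L^2}\left\|\langle a\xi\rangle^r\widehat\psi\right\|_{L^2},$$
which is the claimed estimate. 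One should note that the hypothesis $\phi,\psi\in H^r(\mathbb R)$ together with $a>0$ guarantees $g,h\in L^2(\mathbb R)$, so all the integrals above are finite and the manipulations are legitimate.

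There is no real obstacle here; if anything, the only point that deserves a moment's care is extracting the constant $2^{r/2}$ rather than a larger, $r$-dependent constant — this is why I would be careful to use the form $1+a^2\xi^2\le 2(1+a^2\xi_1^2)(1+a^2(\xi-\xi_1)^2)$ (note the product on the right, not a sum) before raising to the power $r/2$. Everything else — the convolution identity, Young's inequality — is standard.
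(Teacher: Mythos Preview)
Your argument is correct: the Peetre-type inequality $\langle a\xi\rangle^r \le 2^{r/2}\langle a\xi_1\rangle^r\langle a(\xi-\xi_1)\rangle^r$ (obtained exactly as you describe) followed by Cauchy--Schwarz on the convolution gives the stated bound with the sharp constant. The paper itself does not prove this lemma but simply refers the reader to Lemma~2.3.1 in Dix~\cite{Dix}; your self-contained argument is the standard one and is essentially what that reference contains, so there is nothing to compare.
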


\begin{proof}
See proof of Lemma 2.3.1 in Dix \cite{Dix}.
\end{proof}

\begin{rem}\label{RemarkLB1}
Assuming that $s\geq 0$ and $0<T\leq 1$, we have a similar result as the one obtained in Proposition \ref{PropLB2} for the space $C\left([0,T];H^s(\mathbb{R})\right)$. In fact, we have that
$$\left\|\int_{0}^t S(t-t')\partial_x(uv)(t') \ dt'\right\|_{L^{\infty}_tH^s_x} \lesssim_{s} e^{\frac{\eta T}{2}} T^{\frac{1}{4}}\left\|u\right\|_{L^{\infty}_tH^s_x}\left\|v\right\|_{L^{\infty}_tH^s_x},$$
for all $u,v\in C\left([0,T];H^s(\mathbb{R})\right).$
To deduce this result, from Lemma \ref{LemmaLB2} with $a=1$ and inequality \eqref{LBequa5} we get
\begin{align} 
 \int_{0}^{t}  &\left\|S(t-t')\partial_x(uv)(t')\right\|_{H^s} \, dt'  \nonumber \\
 &\lesssim \int_{0}^t \left\| |\xi| e^{\eta\left(|\xi|-\xi^2\right)(t-t')} \right\|_{L^2(\mathbb{R})} \left\|\left\langle \xi \right\rangle^s \left(uv(t')\right)^{\wedge}(\xi)\right\|_{L^{\infty}(\mathbb{R})}  \, dt' \nonumber \\
 & \lesssim_s \int_{0}^{t} \left\||\xi| e^{\eta\left(|\xi|-\xi^2\right)(t-t')}\right\|_{L^2(\mathbb{R})}\left\|u(t')\right\|_{H_x^s}\left\|v(t')\right\|_{H^s_x}\ dt'  \nonumber \\
 & \lesssim_{s} e^{\frac{\eta T}{2}} T^{\frac{1}{4}}\left\|u\right\|_{L^{\infty}_tH^s_x}\left\|v\right\|_{L^{\infty}_tH^s_x}.
\end{align}
\end{rem}

\begin{rem}\label{RemarkLB2}
Let $s\geq 0$ and $0<T\leq 1$. We have the same result given in Proposition \ref{PropLB3}, changing $X_{T}^s$ by $C\left([0,T];H^s(\mathbb{R})\right)$ and taking $\delta\in[0,\frac{1}{2})$. This result is proved using Lemma \ref{LemmaLB2} and arguing as in the proof of Proposition 4 in \cite{Pilod}.
\end{rem}


\subsection{Well-posedness in $H^s(\mathbb{R})$, $s>-1/2$.}

\begin{proof}[Proof of Theorem \ref{mainresult}] For a fixed $T\in(0,1]$, we take $\mathfrak{X}_T^s=X_T^s$, if $-\frac{1}{2}< s < 0$, and when $s> 0$, we make $\mathfrak{X}_T^s=C\left([0,T];H^s(\mathbb{R})\right)$. We divide the proof in five steps
\\ \\
1. \emph{Existence}. Let $\phi \in H^s(\mathbb{R})$ with $s> -\frac{1}{2}$. We consider the application
$$\Psi(u)=S(t)\phi-\frac{1}{2}\int_{0}^t S(t-t')\partial_x(u^2(t')) \ dt',$$
for each $u \in \mathfrak{X}_T^s$. By Proposition \ref{PropLB1}, together with Proposition \ref{PropLB2} when $s< 0$, or by Remark \ref{RemarkLB1} when $s\geq 0$, there exists a positive constant $C=C(\eta,s)$ such that 
\begin{align}
\left\|\Psi(u)\right\|_{\mathfrak{X}_T^s} &\leq C\left(\left\|\phi\right\|_{H^s}+T^{g(s)}\left\|u\right\|_{\mathfrak{X}_T^s}^2\right), \label{WPequa1} \\
\left\|\Psi(u)-\Psi(v)\right\|_{\mathfrak{X}_T^s} & \leq C T^{g(s)}\left\|u-v\right\|_{\mathfrak{X}_T^s}\left\|u+v\right\|_{\mathfrak{X}_T^s}, \label{WPequa2}
\end{align}
for all $u, v \in \mathfrak{X}_T^s$ and $0<T\leq 1$. Where $g(s)=\frac{1}{4}(1+2s)$, when $s\in (-\frac{1}{2},0)$ and $g(s)=\frac{1}{4}$, for all $s\geq 0$. Then, we define $E_{T}(\gamma)=\left\{u\in \mathfrak{X}_T^s : \left\|u \right\|_{\mathfrak{X}_T^s}\leq\gamma \right\}$, with $\gamma=2C\left\|\phi \right\|_{H^s}$ and $0<T\leq \min \left\{1,\left(4C\gamma\right)^{-\frac{1}{g(s)}} \right\}$. The estimates \eqref{WPequa1} and \eqref{WPequa2} imply that $\Psi$ is a contraction on the complete metric space $E_T(\gamma)$.  Therefore, the Fixed Point Theorem implies the existence of a unique solution $u$ of \eqref{intequation} in $E_T(\gamma)$ with $u(0)=\phi$. 
\\ \\
2. \emph{Continuous dependence}. We will verify that the map $\phi \in H^s(\mathbb{R}) \mapsto u \in \mathfrak{X}_T^s$, where $u$ is a solution of \eqref{cl} obtained in the step of \emph{Existence} is continuous. More precisely, for $s>-\frac{1}{2}$, if $\phi_n \rightarrow \phi_{\infty}$ in $H^s(\mathbb{R})$, let $u_n\in \mathfrak{X}_{T_n}^s$ be the respective solutions of \eqref{intequation} (obtained in the part of  \emph{Existence}) with $u_n(0)=\phi_n$, for all $1\leq n\leq \infty$. Then for each $T'\in (0,T_{\infty})$, $u_n \in \mathfrak{X}_{T'}^s$ (for $n$ large enough) and $u_n \rightarrow u_{\infty}$ in $\mathfrak{X}_{T'}^s$.

We recall that the solutions and times of existence previously constructed satisfy 
\begin{align}
&0<T_n\leq \min\left\{1, \left(8C^2\left\|\phi_n\right\|_{H^s}\right)^{-\frac{1}{g(s)}}\right\}, \label{WPequa3} \\
&\left\|u_n\right\|_{\mathfrak{X}_T^s} \leq 2C\left\|\phi_n\right\|_{H^s}, \label{WPequa4}
\end{align}
for all $n\in \mathbb{N}\cup\left\{\infty\right\}$. Let $T'\in (0,T_{\infty})$, the above inequalities and the hypothesis imply that there exists $N\in \mathbb{N}$, such that for all $n\geq N$, we have that $T'\leq T_n$ and
$$\frac{\left\|\phi_n\right\|_{H^s}+\left\|\phi_{\infty}\right\|_{H^s}}{\left\|\phi_{\infty}\right\|_{H^s}}\leq 3.$$
Therefore, combining \eqref{WPequa3}, \eqref{WPequa4} with the Propositions \ref{PropLB1}, \ref{PropLB2} when the index $s$ is negative, or with the Remark \ref{RemarkLB1} when $s\geq 0$, it follows that for each $n\geq N$ 
\begin{align*}
\left\|u_n-u_{\infty}\right\|_{\mathfrak{X}_{T'}^s} &\leq C\left\|\phi_n-\phi_{\infty}\right\|_{H^s}+ CT_{\infty}^{g(s)}\left\|u_n+u_{\infty}\right\|_{\mathfrak{X}_{T'}^s}\left\|u_n-u_{\infty}\right\|_{\mathfrak{X}_{T'}^s} \\
&\leq C\left\|\phi_n-\phi_{\infty}\right\|_{H^s}+ \frac{\left(\left\|\phi_n\right\|_{H^s}+\left\|\phi_{\infty}\right\|_{H^s}\right)}{4\left\|\phi_{\infty}\right\|_{H^s}}\left\|u_n-u_{\infty}\right\|_{\mathfrak{X}_{T'}^s} \\
&\leq C\left\|\phi_n-\phi_{\infty}\right\|_{H^s}+ \frac{3}{4}\left\|u_n-u_{\infty}\right\|_{\mathfrak{X}_{T'}^s}. 
\end{align*}
Hence we have deduced that $\left\|u_n-u_{\infty}\right\|_{\mathfrak{X}_{T'}^s}\leq C\left\|\phi_n-\phi_{\infty}\right\|_{H^s}$, for all $n\geq N$.
\\ \\
3. \emph{Uniqueness}. Let $u,v \in \mathfrak{X}_T^s$ be solutions of the integral equation \eqref{intequation} on $[0,T]$ with the same initial data. For each $r\in [0,T]$ we define
$$
F_{r}(t)=
\begin{cases}
\frac{1}{2}\int_{r}^{t} S(t-t')\left(\partial_x u^2(t')-\partial_x v^2(t')\right)dt', & \text{si }t\in (r,T] \\
0, & \text{si }t\in[0,r]
\end{cases}
$$
for all $t \in [0,T]$. Arguing as in the proof of Proposition \ref{PropLB2} or Remark \ref{RemarkLB1}, we deduce that there exists a positive constant $C=C(\eta,s)$ depending only on $\eta$ and $s$, such that for all $r\in[0,T]$ and all $\vartheta\in [r,T]$,
\begin{equation}\label{WPequa5}
\left\|F_{r}\right\|_{\mathfrak{X}_{\vartheta}^s} \leq C K \left(\vartheta-r\right)^{g(s)}\left\|u-v\right\|_{\mathfrak{X}_{\vartheta}^s},
\end{equation}
where $K=\left\|u\right\|_{\mathfrak{X}_{T}^s}+\left\|v\right\|_{\mathfrak{X}_{T}^s}$. In particular, inequality \eqref{WPequa5} implies that
\begin{equation}\label{WPequa6}
\left\|u-v\right\|_{\mathfrak{X}_{\vartheta}^s}=\left\|F_{0}\right\|_{\mathfrak{X}_{\vartheta}^s} \leq C K \vartheta^{g(s)}\left\|u-v\right\|_{\mathfrak{X}_{\vartheta}^s}.
\end{equation}
Thus, choosing $\vartheta \in \left(0,(CK)^{-\frac{1}{g(s)}}\right)$ a fixed number, \eqref{WPequa6} implies that $u \equiv v$ on $[0,\vartheta]$. Therefore we can iterate this argument using \eqref{WPequa5} and our choose of $\vartheta$, until we extend the uniqueness result to the whole interval $[0,T]$.
\\ \\
 \emph{4. The solution $ u\in C\left((0,T],H^{\infty}(\mathbb{R})\right)$}. From Lemma \ref{LemmaLB1} and arguing as in the proof of Proposition 2.2 in \cite{BI}, we have that the map $t\mapsto S(t)\phi$ is continuous in the interval $(0,T]$ with respect to the topology of $H^{\infty}(\mathbb{R})$. Since our solution $u$ is in $\mathfrak{X}_T^s$, we deduce from Proposition \ref{PropLB3} or Remark \ref{RemarkLB2}, that there exists $\lambda>0$, such that 
$$u\in C\left([0,T];H^s(\mathbb{R})\right)\cap C\left((0,T];H^{s+\lambda}(\mathbb{R})\right).$$ 
Therefore we can iterate this argument, using uniqueness result and the fact that the time of existence of solutions depends uniquely of the $H^s(\mathbb{R})$-norm of the initial data. Thus we deduce that 
$$u\in C\left([0,T];H^s(\mathbb{R})\right)\cap C\left((0,T];H^{\infty}(\mathbb{R})\right).$$
\\ \\
5. \emph{Global well-posedness}. Since Pastr\'an proved in \cite{P} that CL is globally well posed for all $\phi\in H^s(\mathbb{R})$ when $s\geq 0$, we shall prove that CL is globally well posed in $X_{T}^s$ when $-\frac{1}{2}<s<0$. In fact, let $s\in (-1/2,0)$, $\phi\in H^s(\mathbb{R})$ and $u\in X_{T}^s$ be the solution of the Cauchy problem \eqref{cl} obtained in above steps. Let $T'\in (0,T)$ fixed, we have that
$$
\left\|u\right\|_{X_{T'}^s}=M_{T',s}<\infty.
$$
Since $u\in C\left((0,T];H^{\infty}(\mathbb{R})\right)$, it follows that $u(T')\in L^2(\mathbb{R})$. Thus, Theorem 1.2 in \cite{P} implies that $\tilde{u}$, the solution of \ref{intequation} with initial data $u(T')$, is global in time. Moreover, uniqueness implies that $\tilde{u}(t)=u(T'+t)$ for all $t\in [0,T-T']$. Therefore, we deduce that
\begin{align*}
\left\|u\right\|_{X_{T}^s} & \leq \left\|u\right\|_{X_{T'}^s}+\left\|u(T'+\cdot)\right\|_{X_{T-T'}^s} \\
&\leq M_{T',s}+\left\|\tilde{u}\right\|_{X_{T-T'}^s} \\
&= M_{T',s}+ \sup_{t\in [0,T-T']} \left\{\left\|\tilde{u}(t)\right\|_{H^s}+t^{|s|/2}\left\|\tilde{u}(t)\right\|_{L^2(\mathbb{R})}\right\} \\
& \leq M_{T',s}+ \left(1+(T-T') ^{|s|/2}\right)\sup_{t\in [0,T-T']}\left\|\tilde{u}(t)\right\|_{L^2(\mathbb{R})}. 
\end{align*}
The global result follows from the above estimate.
\end{proof}

\section{ Ill-posedness results}

Without using the Fourier restriction norm method we have deduced for the Cauchy problem \eqref{cl} local and global well-posedness in $H^s(\mathbb{R})$, when $s>-1/2$. We shall prove that this result is sharp in the sense that the flow-map data-solution fails to be $C^3$ in $H^s(\mathbb{R})$ for $s<-\frac{1}{2}$. We recall that Pastr\'an proved in \cite{P} that the assumption of $C^2$ regularity in $H^s(\mathbb{R})$ for the flow-map of the Cheen-Lee equation fails when $s<-1$.

\begin{proof}[Proof of Theorem \ref{malpuestodos}]
Let $s<-\frac{1}{2}$, suppose that there exists $T>0$ such that the Cauchy problem \eqref{cl} is locally well-posed in $H^s(\mathbb{R})$ on the time interval $[0,T]$ and such that the flow map data-solution $\Phi(t): H^s(\mathbb{R})\longrightarrow C\left([0,T];H^s(\mathbb{R})\right), \, \phi \longmapsto u\left(t\right)$ is $C^3$ at the origin. When $\phi \in H^s(\mathbb{R})$, we have that $\Phi(\cdot)\phi$ is a solution of the equation \eqref{cl} with initial data $\phi$. This means that $\Phi(\cdot)\phi$ is a solution of the integral equation

$$\Phi(t)\phi=S(t)\phi-\frac{1}{2}\int_{0}^t S(t-t')\partial_x(\Phi(t)\phi)^2dt'.$$

Since the Cauchy problem \eqref{cl} is supposed to be well-posed, we know using the uniqueness that $\Phi(t)(0)=0$. Thus, with this result and using the integral equation, we see that
\begin{align*}
&u_1(t)= d_{0}\Phi(t)(\phi)=S(t)\phi, \\
&u_2(t)=d^2_0\Phi(t)(\phi,\phi) \sim \int_0^t S(t-t')\partial_x\left(u_1(t')u_1(t')\right)\, dt', \\
&u_3(t)=d^3_0\Phi(t)(\phi,\phi,\phi)\sim \int_0^t S(t-t')\partial_x \left(u_1(t)u_2(t)\right)\, dt'. 
\end{align*}
The assumption of $C^3$ regularity implies that $d^3_0\Phi(t)\in \mathcal{B}\left(H^s(\mathbb{R})\times H^s(\mathbb{R})\times H^s(\mathbb{R}),H^s(\mathbb{R})\right)$, which would lead to the following inequality
\begin{equation}\label{IPequa1}
\left\|u_3(t)\right\|_{H^s}\lesssim \left\|\phi\right\|_{H^s}^3, \forall \phi \in H^s(\mathbb{R}).
\end{equation}
We will show that \eqref{IPequa1} fails for an appropriated function $\phi.$ We define $\phi$ by his Fourier transform as
\begin{equation}\label{IPequa2}
\widehat{\phi}(\xi)=N^{-s}\gamma^{-\frac{1}{2}}\left(\chi_{I_N}(\xi)+\chi_{I_N}(-\xi)\right),
\end{equation}
where $I_N=[N,N+2\gamma],$ $N\gg 1$ and $\gamma =\epsilon N$, with $0<\epsilon\ll 1$ fixed. We note that
$$\left\|\phi\right\|_{H^s}\sim_s 1.$$
On the other hand, we have that
\begin{equation}\label{IPequa3}
\widehat{u_3(t)}(\xi)=i\xi \int_{0}^t e^{iq(\xi)(t-t')-p(\xi)(t-t')} \widehat{u_1(t')} \ast \widehat{u_2(t')}(\xi) \,  dt'.
\end{equation}
Then, using the definition of the group $S(t)$ and Fubini's Theorem, we get
\begin{align}\label{IPequa4}
\widehat{u_2(t)}(\xi)&=i\int_{0}^t\xi e^{iq(\xi)(t-t')-p(\xi)(t-t')}\left\{e^{iq(\cdot)t'-p(\cdot)t'}\widehat{\phi}\ast e^{iq(\cdot)t'-p(\cdot)t'}\widehat{\phi} \right\}(\xi) \, dt' \nonumber\\
&= i\xi e^{iq(\xi)t-p(\xi)t}\int_{\mathbb{R}}\int_{0}^t\widehat{\phi}\left(\xi-\xi_1\right)\widehat{\phi}\left(\xi_1\right)e^{i\left(q(\xi_1)+q(\xi-\xi_1)-q(\xi)\right)t'-\left(p(\xi_1)+p(\xi-\xi_1)-p(\xi)\right)t'} \, dt' \, d\xi_1 \nonumber \\
&= i\xi e^{iq(\xi)t-p(\xi)t}\int_{\mathbb{R}}\widehat{\phi}\left(\xi-\xi_1\right)\widehat{\phi}\left(\xi_1\right)\frac{e^{\sigma(\xi,\xi_1)t}-1}{\sigma(\xi,\xi_1)} \, d\xi_1,
\end{align}
where
\begin{align*}
\sigma(\xi,\xi_1)&=i\left(q(\xi_1)+q(\xi-\xi_1)-q(\xi)\right)-\left(p(\xi_1)+p(\xi-\xi_1)-p(\xi)\right) \\
&= i\beta\left(|\xi-\xi_1|(\xi-\xi_1)-|\xi|\xi+|\xi_1|\xi_1\right)-\eta\left((\xi-\xi_1)^2-|\xi-\xi_1|-\xi^2+|\xi|+|\xi_1|^2-|\xi_1|\right).
\end{align*}
Therefore, by \eqref{IPequa4} and Fubini's theorem we have 
\begin{align*}
&\widehat{u_3(t)}(\xi)=c\xi e^{iq(\xi)t-p(\xi)t}\int_{0}^t \int_{\mathbb{R}} e^{-iq(\xi)t'+p(\xi)t'} \widehat{u_2(t')}(\xi_2)\widehat{u_1}(\xi-\xi_2) \, d\xi_2 \, dt' \\
& =c\xi e^{iq(\xi)t-p(\xi)t}\int_{0}^t \int_{\mathbb{R}^2} \widehat{\phi}(\xi_1)\widehat{\phi}(\xi_2-\xi_1)\widehat{\phi}(\xi-\xi_2) \xi_2 e^{\sigma(\xi,\xi_2)t'}\left( \frac{e^{\sigma(\xi_2,\xi_1)t'}-1}{\sigma(\xi_2,\xi_1)}\right) \, d\xi_1 \, d\xi_2 \, dt' \\
& =c\xi e^{iq(\xi)t-p(\xi)t} \int_{\mathbb{R}^2} \widehat{\phi}(\xi_1)\widehat{\phi}(\xi_2-\xi_1)\widehat{\phi}(\xi-\xi_2)\frac{ \xi_2}{\sigma(\xi_2,\xi_1)} \left(\frac{e^{\psi(\xi,\xi_1,\xi_2)t}-1}{\psi(\xi,\xi_1,\xi_2)} -\frac{e^{\sigma(\xi,\xi_2)t}-1}{\sigma(\xi,\xi_2)}\right) \, d\xi_1 \, d\xi_2, \\
\end{align*}
where we set
$$\psi(\xi,\xi_1,\xi_2)=\sigma(\xi,\xi_2)+\sigma(\xi_2,\xi_1).$$

By support considerations, we observe that for all $\xi\in [N+3\gamma,N+4\gamma]$ 
\begin{equation}\label{IPequa5}
\left|\widehat{u_3(t)}(\xi)\right|\geq c N^{-3s}\gamma^{-\frac{3}{2}}\left|\xi e^{iq(\xi)t-p(\xi)t} \int_{K_{\xi}} \frac{ \xi_2}{\sigma(\xi_2,\xi_1)} \left(\frac{e^{\psi(\xi,\xi_1,\xi_2)t}-1}{\psi(\xi,\xi_1,\xi_2)} -\frac{e^{\sigma(\xi,\xi_2)t}-1}{\sigma(\xi,\xi_2)}\right) \, d\xi_1 \, d\xi_2\right|,
\end{equation}
where $K_{\xi}=K_{\xi}^1\cup K_{\xi}^2 \cup K_{\xi}^3$ and
\begin{align*}
K_{\xi}^1=\left\{(\xi_1,\xi_2):\xi_1\in I_N, \, \xi_2-\xi_1\in I_N, \, \xi-\xi_2\in -I_N \right\}, \\
K_{\xi}^2=\left\{(\xi_1,\xi_2):\xi_1\in I_N, \, \xi_2-\xi_1\in -I_N, \, \xi-\xi_2\in I_N \right\}, \\
K_{\xi}^3=\left\{(\xi_1,\xi_2):\xi_1\in -I_N, \, \xi_2-\xi_1\in I_N, \, \xi-\xi_2\in I_N \right\}. \\
\end{align*}
When $\xi\in [N+3\gamma,N+4\gamma]$ and $(\xi_1,\xi_2)\in K_{\xi}$, our choice of $\gamma$ implies
\begin{align*}
\left|\sigma\left(\xi_2,\xi_1\right)\right|\sim \left(\beta+\eta\right)N^2, \\
\left|\psi\left(\xi,\xi_1,\xi_2\right)\right|\sim\left|\sigma\left(\xi,\xi_2\right)\right|\sim \left(\beta+\eta\right)N^2.
\end{align*}
Hence we obtain that
\begin{equation}\label{IPequa6}
\left|\frac{1}{\sigma(\xi_2,\xi_1)}\left(\frac{e^{\psi(\xi,\xi_1,\xi_2)t_N}-1}{\psi(\xi,\xi_1,\xi_2)} -\frac{e^{\sigma(\xi,\xi_2)t_N}-1}{\sigma(\xi,\xi_2)}\right)\right|=t_N^2+O\left(t_N^3N^4\right).
\end{equation}
Now, setting a time $t_N:=N^{-2-\epsilon},$ we have that $e^{\eta\left(|\xi|-\xi^2\right)t_N}\sim e^{-\eta N^2t_N}\sim e^{-\eta N^{-\epsilon}}=C>0$, for all $\xi \in [N+3\gamma,N+4\gamma]$. Thus, since the main contribution in \eqref{IPequa6} is given by $N^{-4-2\epsilon}$, using that $|\xi_2| \sim N$ and $mes(K_{\xi})\gtrsim \gamma^2$, we deduce from \eqref{IPequa5} that 
\begin{align*}
\left|\widehat{u_3(t)}(\xi)\right|& \chi_{[N+3\gamma,N+4\gamma]}(\xi) \\
 &\gtrsim_{\eta,\beta} N^{-3s-2-2\epsilon}\gamma^{\frac{1}{2}}\chi_{[N+3\gamma,N+4\gamma]}(\xi).
\end{align*}
Therefore, we conclude
$$\left\|u_{3}(t_N)\right\|_{H^s}\gtrsim_{\eta,\beta,s} \gamma N^{-2s-2-2\epsilon}\sim N^{-2s-1-2\epsilon},$$
which contradicts that $\left\|\phi\right\|_{H^s} \sim_{s} 1$ for $N$ large enough, since $s<-\frac{1}{2}$.
\end{proof}
\begin{proof}[Proof of Theorem \ref{malpuestoCLND}]
Let $\phi \in H^s(\mathbb{R})$ and  $u_1(t)$, $u_2(t)$ as in the proof of Theorem \ref{malpuestodos}. The assumption of $C^2$ regularity implies that 
\begin{equation}\label{IPequa7}
\left\|u_2(t)\right\|_{H^s} \leq C\left\|\phi\right\|_{H^s}^2.  
\end{equation}
We will proof that \eqref{IPequa7} fails for an appropriated function $\phi$. Let $\phi$ defined as in \eqref{IPequa2}, but in this case we consider $\gamma=N^{1-\epsilon}$ with $0<\epsilon \ll 1$. By support considerations we have that for all $\xi \in [2N,2N+4\gamma]$
\begin{align}
\left|\widehat{u_2(t)}(\xi)\right| \geq \left|c N^{-2s}\gamma^{-1} \xi e^{\eta(|\xi|-\xi^2)t}\, \int_{K_{\xi}}\frac{e^{\lambda(\xi,\xi_1)t}-1}{\lambda(\xi,\xi_1)} \, d\xi_1 \right|,
\end{align}
where
\begin{align*}
&K_{\xi}=\left\{\xi_1 \, : \, \xi-\xi_1\in I_{N}, \, \xi_1\in I_N \right\}, \\
&\lambda(\xi,\xi_1)=-\eta\left((\xi-\xi_1)^2-|\xi-\xi_1|-\xi^2+|\xi|+|\xi_1|^2-|\xi_1|\right).
\end{align*}
Now, for $\xi \in [2N,2N+4\gamma]$ and $\xi_1 \in K_{\xi}$, it is easy to prove that $\lambda(\xi,\xi_1) \sim \eta N^2.$
Therefore, choosing $t_N=N^{-2-\epsilon}$, it follows that $e^{\eta(|\xi|-\xi^2)t_N}>C_{\eta}>0$. Moreover,
\begin{equation}\label{IPequa8}
\left|\frac{e^{\lambda(\xi,\xi_1)t_N}-1}{\lambda(\xi,\xi_1)}\right|= \frac{1}{N^{2+\epsilon}}+O\left(\frac{1}{N^{2+2\epsilon}}\right).
\end{equation}
Hence, since $mes(K_\xi)\gtrsim \gamma$, it follows from \eqref{IPequa8} that
\begin{align*}
|\widehat{u_2(t_N)}(\xi)|\chi_{[2N,2N+4\gamma]} \gtrsim_{\eta} N^{-2s-1-\epsilon}\chi_{[2N,2N+4\gamma]}. 
\end{align*}
Therefore, we have a lower bound for the norm of $u_2(t_N)$ in $H^s(\mathbb{R})$, given by
\begin{equation}
\left\|u_2(t_N)\right\|_{H^s}^2 \gtrsim_{\eta}  N^{-2s-1-3\epsilon},
\end{equation}
which contradicts \eqref{IPequa7} for $N$ large enough, since $s<-\frac{1}{2}$ and $\left\|\phi\right\|_{H^s}\sim 1$.
\end{proof}

\section{Convergence of solutions of CL to solutions of CLND}

In this section we study the convergence of the solution of the Chen-Lee equation when the dispersion $\beta$ tends to zero and the dissipation $\eta>0$ is fixed. To emphasize the dependence of the semigroup associated with the linear part of the equation \eqref{cl} with the parameter $\beta$, we will use throughout this section the following notation 
$$S_{\beta}(t)\phi =\left(e^{\left(i\beta|\xi|\xi+\eta(|\xi|-\xi^2)\right)t}\widehat{\phi}(\xi) \right)^{\vee},$$
for all $\beta \geq 0$, $\phi\in H^s(\mathbb{R})$, $s\in \mathbb{R}$. 

We observe that the results given in Theorem \ref{mainresult} hold for the initial value problem \eqref{CCLequa1}, since the constants and arguments involved are not dependent upon the parameter $\beta$. In fact, Theorem \ref{mainresult} applies with the same proof when $\beta \in \mathbb{R}$ and $\eta>0$.

\begin{proof}[Proof of the Theorem \ref{convergeCLND}] 
As in the proof of Theorem \ref{mainresult} we consider $\mathfrak{X}_T^s=X_T^s$, if $s\in \left(-\frac{1}{2},0\right)$ and when $s\geq 0$, we take $\mathfrak{X}_T^s=C\left([0,T];H^s(\mathbb{R})\right)$. Let $\beta>0$, then $v^{\beta}=u^{\beta}-u^{0}$ satisfies the integral equation
$$v^{\beta}(t)=\left(S_{\beta}(t)-S_{0}(t)\right)\phi-\frac{1}{2}\int_{0}^t \left(S_{\beta}(t-t')-S_{0}(t-t')\right)\partial_x( (u^{\beta})^2-(u^{0})^2)\ dt'.$$
Therefore, for $0<T_1\leq T$ the triangle inequality implies that 
\begin{align}\label{CCLequa2}
\left\|v^{\beta}\right\|_{\mathfrak{X}_{T_1}^s}\leq & \ \mathcal{I}_{\beta}+\mathcal{II}_{\beta}:= \left\|\left(S_{\beta}(t)-S_{0}(t)\right)\phi\right\|_{\mathfrak{X}_{T_1}^s} \nonumber \\
&+ \left\|\frac{1}{2}\int_{0}^t \left(S_{\beta}(t-t')-S_{0}(t-t')\right)\partial_x( (u^{\beta})^2-(u^{0})^2)\ dt'\right\|_{\mathfrak{X}_{T_1}^s}.
\end{align}
Since the solutions constructed in Theorem \ref{mainresult} satisfy $\left\|u^{\beta} \right\|_{\mathfrak{X}_{\vartheta}^s} \leq \gamma$, for all $\beta \geq 0$, we obtain from Proposition \ref{PropLB2} for $s< 0$, or by Remark \ref{RemarkLB1} when $s\geq 0$, that

\begin{equation}\label{CCLequa3}
\mathcal{II_{\beta}}\leq C_{s,\eta}{T_1}^{g(s)}\left(\left\|u^{\beta}\right\|_{\mathfrak{X}_{T_1}^s}+\left\|u^{0}\right\|_{\mathfrak{X}_{T_1}^s}\right)\left\|v^{\beta}\right\|_{\mathfrak{X}_{T_1}^s}\leq  2C_{s,\eta}{T_1}^{g(s)}\gamma \left\|v^{\beta}\right\|_{\mathfrak{X}_{T_1}^s},
\end{equation}
where $g(s)=\frac{1}{4}(1+2s)$, if $s\in (-\frac{1}{2},0)$, and $g(s)=\frac{1}{4}$, for all $s\geq 0$. So, taking $T_1>0$ such that $T_1 \leq \left( 4C_{s,\eta}\gamma\right)^{-\frac{1}{g(s)}}$ and combining \eqref{CCLequa2} with \eqref{CCLequa3}, we obtain
\begin{equation}\label{CCLequa4}
\frac{1}{2}\left\|v^{\beta}\right\|_{\mathfrak{X}_{T_1}^s}\leq  \mathcal{I}_{\beta}.
\end{equation}
We will estimate $\mathcal{I}_{\beta}$ when $\beta$ tends to zero. Using the mean value inequality and Lemma \ref{LemmaLB1} with $\lambda=1$, we deduce that for all $s>-\frac{1}{2}$ 
\begin{align}\label{CCLequa5}
\left\|\left(S_{\beta}(t)-S_{0}(t)\right)\phi\right\|_{H^s}&=\left\|\left\langle \xi \right\rangle^s e^{\eta(|\xi|-\xi^2)t}\left(e^{i\beta|\xi|\xi t}-1\right)\widehat{\phi}(\xi)\right\|_{L^2(\mathbb{R})}\nonumber \\
&\leq \beta t \left\||\xi|^2 e^{\eta(|\xi|-\xi^2)t}\right\|_{L^{\infty}(\mathbb{R})}\left\|\phi\right\|_{H^s} \nonumber \\
&\lesssim \beta  \left(\frac{\eta T_1+1}{\eta}\right)e^{\frac{\eta}{8}\left(T_1+T_1^{1/2}\sqrt{t+\frac{16}{\eta}}\right)}\left\|\phi\right\|_{H^s}, 
\end{align}
for each $t\in [0,T_1]$. 

On the other hand, since $0\leq t \leq T_1\leq 1$, we have that $t^{1/2}\leq \left\langle \xi t^{1/2} \right\rangle \left\langle \xi \right\rangle^{-1}$ for all $\xi\in \mathbb{R}$. Let $s\in\left(-\frac{1}{2},0\right)$  fixed. Then from the Mean Value Inequality and Lemma \ref{LemmaLB1}, we obtain
\begin{align}\label{CCLequa6}
t^{\frac{|s|}{2}}\left\|\left(S_{\beta}(t)-S_{0}(t)\right)\phi\right\|_{L^2(\mathbb{R})} & \leq  \left\|\left\langle \xi t^{1/2} \right\rangle^{|s|}\left\langle \xi \right\rangle^s e^{\eta(|\xi|-\xi^2)t}\left(e^{i\beta|\xi|\xi t}-1\right)\widehat{\phi}(\xi)\right\|_{L^2(\mathbb{R})} \nonumber \\
&\leq \beta t \left\|\left\langle \xi t^{1/2} \right\rangle^{|s|}|\xi|^2 e^{\eta(|\xi|-\xi^2)t}\right\|_{L^{\infty}(\mathbb{R})}\left\|\phi\right\|_{H^s} \nonumber \\
&\lesssim_s \beta  \left(f_{1}(T_1)+f_{\frac{2+|s|}{2}}(T_1)\right)\left\|\phi\right\|_{H^s},
\end{align}
where $f_{1}$ and $f_{\frac{2+|s|}{2}}$ are defined as in \eqref{LBequa1}. Therefore, from \eqref{CCLequa5}, \eqref{CCLequa6} and the definition of the norm in $\mathfrak{X}_{T_1}^s$, we conclude that $\lim_{\beta \to 0^{+}}\mathcal{I}_{\beta}=0$. Hence, from \eqref{CCLequa4} we deduce that 
$$\sup_{t\in[0,T_1]}\left\|u^{\beta}(t)-u^{0}(t)\right\|_{H^s}\leq  \left\|u^{\beta}-u^{0}\right\|_{\mathfrak{X}_{T_1}^s} \to 0,\text{ as } \beta \to 0^{+}.$$
Finally, we can iterate this process to conclude the result in the whole interval $[0,T]$.
\end{proof}

\section{Convergence of solutions of CL to solutions of BO}

In this section we examine the convergence of solutions of the Chen-Lee equation to solutions of the initial value problem for the integral version of the Benjamin-Ono equation when the dissipation $\eta$ tends to zero and the dispersion $\beta>0$ is fixed. In order to deduce this result we will adapt the ideas employed in the parabolic regularization method (see \cite{BI} and \cite{Duque}).

We will start showing that the time of existence of solutions for CL can be chosen independent of $\eta \in \left(0,1\right)$. 

\begin{lemma}\label{LemmaCBO1}
Let $\phi\in H^s(\mathbb{R})$ where $s> \frac{3}{2}$ is fixed, and let $u^{\eta}\in C\left([0,T];H^s(\mathbb{R})\right)$ be a solution of CL with $\eta>0$. Then there exists a $T'_s>0$ depending on $\left\|\phi\right\|_{H^s}$, but not on $0<\eta<1$, such that $u^{\eta}$ can be extended to the interval $[0,T'_s]$, and there is a function $\rho(t)\in C\left([0,T'_s];\mathbb{R}\right)$ such that
\begin{align}
&\left\|u^{\eta}(t)\right\|_{H^s}\leq \rho(t), \, \rho(0)=\left\|\phi\right\|_{H^s}, \, t\in[0,T'_s].
\end{align}
\end{lemma}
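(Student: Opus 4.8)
The plan is to establish an \emph{a priori} energy estimate for smooth solutions of CL that is uniform in $\eta\in(0,1)$, and then combine it with the local existence theory (Theorem \ref{mainresult}) and a standard continuation argument. First I would work with sufficiently regular solutions (justified by the smoothing already built into Theorem \ref{mainresult}, which gives $u^{\eta}(t)\in H^\infty$ for $t>0$, or by a further approximation) so that all manipulations below make sense. Taking the $L^2$ inner product of the equation $u_t+uu_x+\beta\mathcal Hu_{xx}+\eta(\mathcal Hu_x-u_{xx})=0$ with $(1-\partial_x^2)^s u$ and summing over the relevant derivatives, the dispersive term $\beta\mathcal Hu_{xx}$ contributes nothing because $\mathcal H\partial_x^2$ is skew-adjoint and commutes with $(1-\partial_x^2)^s$; the dissipative term contributes $\eta\langle(\mathcal H\partial_x-\partial_x^2)u,(1-\partial_x^2)^su\rangle$, whose leading piece is $+\eta\|\partial_x(1-\partial_x^2)^{s/2}u\|_{L^2}^2\ge 0$ (up to a lower-order term $-\eta\langle\mathcal Hu_x,\ldots\rangle$ controllable by Cauchy--Schwarz and absorbed), so it has the \emph{favorable} sign and can simply be discarded. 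What remains is the Benjamin--Ono-type nonlinear term, and the commutator estimate of Kato--Ponce (for $s>3/2$) gives
\begin{equation*}
\left|\int (1-\partial_x^2)^s(uu_x)\,(1-\partial_x^2)^su\,dx\right|\lesssim \|u_x\|_{L^\infty}\|u\|_{H^s}^2\lesssim \|u\|_{H^s}^3,
\end{equation*}
using the Sobolev embedding $H^{s-1}\hookrightarrow L^\infty$. This yields $\frac{d}{dt}\|u^{\eta}(t)\|_{H^s}^2\le C_s\|u^{\eta}(t)\|_{H^s}^3$ with $C_s$ independent of $\eta\in(0,1)$.

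From this differential inequality a comparison argument (Gronwall-type) produces a time $T'_s=T'_s(\|\phi\|_{H^s})>0$, again independent of $\eta\in(0,1)$, and a solution $\rho(t)$ of the scalar ODE $\rho'=C_s\rho^{3/2}$ with $\rho(0)=\|\phi\|_{H^s}$, namely $\rho(t)=\|\phi\|_{H^s}\big(1-\tfrac{1}{2}C_s\|\phi\|_{H^s}^{1/2}t\big)^{-2}$, finite and continuous on $[0,T'_s]$ for any $T'_s<2\big(C_s\|\phi\|_{H^s}^{1/2}\big)^{-1}$, such that $\|u^{\eta}(t)\|_{H^s}\le\rho(t)$ on the interval where the solution exists. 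It then remains to argue that the solution actually extends to all of $[0,T'_s]$: if $[0,T^*)$ with $T^*\le T'_s$ were the maximal interval of existence, the uniform bound $\|u^{\eta}(t)\|_{H^s}\le\rho(T^*)<\infty$ would, via the local existence statement of Theorem \ref{mainresult} (whose existence time depends only on the $H^s$-norm of the data), allow restarting the solution near $T^*$ and extending past it, a contradiction. Hence $u^{\eta}$ exists on $[0,T'_s]$ with the stated bound.

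The main obstacle, and the point requiring care, is making the energy estimate rigorous at the regularity $s>3/2$ rather than merely formal: one must either run the argument for smooth solutions and pass to the limit using continuous dependence, or use the Kato--Ponce / Kenig--Ponce--Vega commutator estimates directly to control $[(1-\partial_x^2)^s,u]u_x$ in $L^2$, together with the observation that the top-order nonlinear term $\int u\,\partial_x(1-\partial_x^2)^s u\cdot(1-\partial_x^2)^su$ integrates by parts to $-\tfrac12\int u_x\,|(1-\partial_x^2)^su|^2$, again bounded by $\|u_x\|_{L^\infty}\|u\|_{H^s}^2$. A secondary technical point is verifying that the dissipative term's lower-order contribution $\eta\langle\mathcal Hu_x,(1-\partial_x^2)^su\rangle$ does not spoil the sign; since $|\langle\mathcal Hu_x,(1-\partial_x^2)^su\rangle|\le\|u\|_{H^s}\|u\|_{H^{s+1}}\le\tfrac12\|\partial_x(1-\partial_x^2)^{s/2}u\|_{L^2}^2+C\|u\|_{H^s}^2$, it is absorbed by the good term at the cost of an extra $\eta\|u\|_{H^s}^2\le\|u\|_{H^s}^2$ contribution to the right-hand side, which only improves the ODE comparison. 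None of this affects the $\eta$-independence of $T'_s$, which is the whole point of the lemma.
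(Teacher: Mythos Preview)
Your approach is essentially the same as the paper's: an energy estimate using Kato's inequality (equivalently Kato--Ponce) for the nonlinear term, vanishing of the dispersive contribution, a uniform-in-$\eta$ bound on the dissipative term, and an ODE comparison to produce $\rho$ and $T'_s$. Two small remarks: (i) the paper handles the dissipative term more directly by writing it in Fourier as $\eta\int(1+\xi^2)^s(|\xi|-\xi^2)|\widehat{u}|^2\,d\xi$ and using $|\xi|-\xi^2\le\tfrac14$, which avoids your absorption step entirely and immediately yields $\tfrac12\tfrac{d}{dt}\|u\|_{H^s}^2\le C_s(\|u\|_{H^s}^3+\|u\|_{H^s}^2)$; (ii) your explicit $\rho$ is slightly off, since $\tfrac{d}{dt}\|u\|_{H^s}^2\le C_s\|u\|_{H^s}^3$ gives $\|u\|_{H^s}'\le\tfrac{C_s}{2}\|u\|_{H^s}^2$, hence $\rho(t)=\|\phi\|_{H^s}\bigl(1-\tfrac{C_s}{2}\|\phi\|_{H^s}\,t\bigr)^{-1}$ rather than the $3/2$-power ODE you wrote. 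Neither point affects the validity of the argument.
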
  
\begin{proof}
We observe that Kato's Inequality \footnote{This inequality states that for a real value function $u$ in $H^s(\mathbb{R})$ with $s>\frac{3}{2}$, there exists $C_s>0$ depending only on $s$, such that $|\left(u,u\partial_x u\right)_s|\leq C_s\left\|\partial_x u\right\|_{s-1}\left\|u\right\|_{H^s}^2$.} and the assumption that $\eta<1$ imply that there exists a constant depending only on $s$ such that
\begin{align*}
\frac{1}{2}\frac{d}{dt}\left\|u^{\eta}(t)\right\|_{H^s}^2 &=-\left(u^{\eta},u^{\eta}u^{\eta}_x\right)_s-\beta\left(u^{\eta},\mathcal{H}u^{\eta}_{xx}\right)_s-\eta\left(u^{\eta},\mathcal{H}u^{\eta}_{x}-u^{\eta}_{xx}\right)_s \\
& \leq C_s\left\|u^{\eta}(t)\right\|_{H^s}^3+\eta\int_{-\infty}^{\infty}\left(1+\xi^2\right)^s\left(|\xi|-\xi^2\right)|\widehat{u^{\eta}}(\xi)|^2\, d\xi \\
& \leq C_s\left(\left\|u^{\eta}(t)\right\|_{H^s}^3+\left\|u^{\eta}(t)\right\|_{H^s}^2 \right).
\end{align*}
Then, integrating the above expression we get
$$\ln\left(\frac{\left\|u^{\eta}(t)\right\|_{H^s}}{1+\left\|u^{\eta}(t)\right\|_{H^s}}\right)-\ln\left(\frac{\left\|\phi\right\|_{H^s}}{1+\left\|\phi\right\|_{H^s}}\right)=\frac{1}{2}\int_{\left\|\phi\right\|_{H^s}^2}^{\left\|u^{\eta}(t)\right\|_{H^s}^2} \frac{dx}{x^{3/2}+x}\leq \frac{C_s}{2} t.$$
Hence
$$\left\|u^{\eta}(t)\right\|_{H^s}\leq \frac{e^{\frac{C_s}{2}t}\left\|\phi\right\|_{H^s}}{1+\left\|\phi\right\|_{H^s}-e^{\frac{C_s}{2}t}\left\|\phi\right\|_{H^s}}.$$
Therefore defining 
\begin{equation}\label{CBOequa1}
\rho(t)=\frac{e^{\frac{C_s}{2}t}\left\|\phi\right\|_{H^s}}{1+\left\|\phi\right\|_{H^s}-e^{\frac{C_s}{2}t}\left\|\phi\right\|_{H^s}}
\end{equation}
and taking $0<T'_s\leq \frac{2}{C_s}\ln\left(\frac{1+\left\|\phi\right\|_{H^s}}{\left\|\phi\right\|_{H^s}}\right)$ the lemma follows.
\end{proof}
Next we will show that $u^0=\lim_{\eta \to 0^{+}}= u^{\eta}$ exists and satisfies the Benjamin-Ono equation in a weak sense. 
\begin{lemma}\label{LemmaCBO2}
Let $s>\frac{3}{2}$, $\beta>0$ and $\phi \in H^s(\mathbb{R})$, then there exists $T=T\left(s,\left\|\phi\right\|_{H^s}\right)$ and a unique $u^0\in C_w\left([0,T];H^s(\mathbb{R})\right)\cap C_w^1\left([0,T];H^{s-2}(\mathbb{R})\right)$, such that $u^0(0)=\phi$ and 
$$\frac{d}{dt}\left\langle u^{0},\psi\right\rangle_{s-2}=-\left\langle u^0u^0_{x}+ \beta \mathcal{H}u^0_{xx},\psi\right\rangle_{s-2},$$
for all $\psi\in H^s(\mathbb{R})$. Moreover, $\left\|u^0\right\|_{H^s}\leq \rho(t)$ for all $t\in[0,T]$ with $\rho$ defined as in \eqref{CBOequa1}, and $u\in AC\left([0,T];H^s(\mathbb{R})\right)$.  
\end{lemma}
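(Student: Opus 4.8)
The plan is to obtain $u^0$ as the limit of the family $\{u^\eta\}_{0<\eta<1}$ restricted to the uniform time interval $[0,T'_s]$ provided by Lemma \ref{LemmaCBO1}. First I would record the uniform bounds: by Lemma \ref{LemmaCBO1}, $\sup_{t\in[0,T'_s]}\|u^\eta(t)\|_{H^s}\le \rho(t)\le M$ for a constant $M$ independent of $\eta$, and reading off the equation \eqref{cl} we get $\|u^\eta_t(t)\|_{H^{s-2}}\le \|u^\eta u^\eta_x\|_{H^{s-2}}+\beta\|\mathcal H u^\eta_{xx}\|_{H^{s-2}}+\eta\|\mathcal H u^\eta_x-u^\eta_{xx}\|_{H^{s-2}}\lesssim M^2+\beta M+M$ uniformly in $\eta\in(0,1)$ (here one uses that $H^{s-1}$ is an algebra since $s-1>1/2$, so $u^\eta u^\eta_x\in H^{s-2}$ with the stated bound). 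Thus $\{u^\eta\}$ is bounded in $C([0,T'_s];H^s)$ and equi-Lipschitz in $C([0,T'_s];H^{s-2})$.

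Next I would extract a limit. Since bounded sets of $H^s$ are relatively compact in $H^{s'}_{loc}$ for $s'<s$, the Arzel\`a--Ascoli theorem (applied with values in $H^{s-2}$, or in $H^{s'}(\mathbb R)$ for any $s-2\le s'<s$ after a diagonal/localization argument as in \cite{BI}) yields a sequence $\eta_n\to 0$ and a function $u^0$ with $u^{\eta_n}\to u^0$ in $C([0,T'_s];H^{s-2}(\mathbb R))$, and by interpolation in $C([0,T'_s];H^{s'})$ for every $s'<s$. The uniform $H^s$-bound passes to the limit via weak-$*$ lower semicontinuity of the norm, giving $u^0(t)\in H^s$ with $\|u^0(t)\|_{H^s}\le\rho(t)$ and, by standard arguments, $u^0\in C_w([0,T'_s];H^s)$; set $T=T'_s$. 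To pass to the limit in the weak formulation, fix $\psi\in H^s$ and integrate the identity $\frac{d}{dt}\langle u^{\eta_n},\psi\rangle_{s-2}=-\langle u^{\eta_n}u^{\eta_n}_x+\beta\mathcal H u^{\eta_n}_{xx},\psi\rangle_{s-2}-\eta_n\langle \mathcal H u^{\eta_n}_x-u^{\eta_n}_{xx},\psi\rangle_{s-2}$ in time; the dissipative term is $O(\eta_n)\to 0$ by the uniform bound, the linear dispersive term converges since $u^{\eta_n}\to u^0$ in $H^{s-2}$ and $\mathcal H\partial_x^2$ is bounded $H^{s}\to H^{s-2}$ together with weak convergence, and the nonlinear term converges because $u^{\eta_n}\to u^0$ strongly in $H^{s'}$ with $s'-1>1/2$ makes $u^{\eta_n}u^{\eta_n}_x\to u^0u^0_x$ strongly in, say, $H^{s-2}$. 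This gives the weak equation; differentiating it back shows $u^0_t = -(u^0u^0_x+\beta\mathcal H u^0_{xx})\in C_w([0,T];H^{s-2})$, hence $u^0\in C^1_w([0,T];H^{s-2})$.

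For uniqueness, suppose $u^0,v^0$ are two such solutions with the same data; then $w=u^0-v^0$ satisfies $w_t+u^0u^0_x-v^0v^0_x+\beta\mathcal H w_{xx}=0$ in $H^{s-2}$. Pairing with $w$ in $H^{s-2}$ — legitimate since $w\in AC([0,T];H^{s-2})$ — the skew-adjointness of $\beta\mathcal H\partial_x^2$ kills the dispersive term, and writing $u^0u^0_x-v^0v^0_x=w u^0_x+v^0 w_x$ one estimates $|\langle wu^0_x+v^0w_x,w\rangle_{s-2}|\lesssim (\|u^0\|_{H^s}+\|v^0\|_{H^s})\|w\|_{H^{s-2}}^2\lesssim M\|w\|_{H^{s-2}}^2$ (this is the Benjamin-Ono commutator/energy estimate at the regularity level $s-2$, valid because $s-2>-1/2$ and using $\|v^0w_x\|_{H^{s-2}}\lesssim \|v^0\|_{H^{s-1}}\|w\|_{H^{s-1}}$ handled by interpolation against the $H^s$ bound, or more robustly by a Kato--Ponce type estimate). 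Grönwall then gives $w\equiv 0$. Finally the improved regularity $u^0\in C([0,T];H^s)$ and continuous dependence claimed in Theorem \ref{convergeBO} follow from the Bona--Smith argument, to be carried out after this lemma.

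I expect the main obstacle to be the compactness/limit extraction producing the limit in a strong enough topology to pass to the limit in the nonlinearity while simultaneously retaining the $H^s$ regularity of $u^0$ — concretely, justifying the Arzel\`a--Ascoli step on the whole line (not just locally) and the weak continuity $u^0\in C_w([0,T];H^s)$; the energy estimates are routine given Kato's inequality and the algebra property of $H^{s-1}$.
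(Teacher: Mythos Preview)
Your compactness-based approach is a valid alternative, but the paper takes a different and more direct route that sidesteps precisely the obstacle you flag. Rather than invoking Arzel\`a--Ascoli (with its attendant difficulty on the whole line), the paper proves an $L^2$ \emph{Cauchy estimate} for the family: for $0<\eta_1,\eta_2<1$ and $w=u^{\eta_1}-u^{\eta_2}$ one computes $\tfrac{d}{dt}\|w\|_0^2 \lesssim |\eta_1-\eta_2|M^2 + (1+C_sM)\|w\|_0^2$, so Gronwall gives $\|u^{\eta_1}(t)-u^{\eta_2}(t)\|_0^2 \lesssim |\eta_1-\eta_2|$ uniformly on $[0,T]$. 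This delivers convergence of the \emph{whole family} (not merely a subsequence) in $C([0,T];L^2)$ with no compact embedding needed. Weak convergence in $H^s$ then follows from a density argument: for $\psi_\epsilon\in\mathcal S$ close to $\psi$ in $H^s$, $|\langle u^{\eta_1}-u^{\eta_2},\psi\rangle_s|\le 2M\epsilon + \|u^{\eta_1}-u^{\eta_2}\|_0\|\psi_\epsilon\|_{2s}$. Uniqueness is likewise carried out at the $L^2$ level via the elementary bound $\tfrac{d}{dt}\|u^0-v\|_0^2\le C_sM\|u^0-v\|_0^2$, avoiding the Kato--Ponce/commutator estimate at regularity $s-2$ that you invoke. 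Your approach buys a more general template (it would apply to problems lacking such a clean $L^2$ structure), but here the paper's argument is both simpler and stronger, yielding full-family convergence from the outset and making the subsequent identification $u^0=\lim_{\eta\to 0}u^\eta$ in Theorem~\ref{convergeBO} immediate.
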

\begin{proof} Let $u^{\eta_1},u^{\eta_2}\in C\left([0,T];H^s(\mathbb{R})\right)$ with $0<\eta_1,\eta_2<1$ be solutions of CL with the same initial data $\phi$, where $T=T\left(s,\left\|\phi\right\|_{H^s}\right)$ is the time of existence given by Lemma \ref{LemmaCBO1} independent of $\eta\in (0,1)$. Let $w=u^{\eta_1}-u^{\eta_2}$, we observe that
\begin{align*}
\frac{1}{2}\left\|w\right\|_{0}^{2}&=-\left(w,u^{\eta_1}u^{\eta_1}_x-u^{\eta_2}u^{\eta_2}_x\right)_0-\beta\left(w,\mathcal{H}w_{xx}\right)-\eta_1\left(w,\left(\mathcal{H}\partial_x-\partial_{xx}\right)w\right)_0\\
& \hspace{12pt}-\left(\eta_1-\eta_2\right)\left(w,\left(\mathcal{H}\partial_x-\partial_{xx}\right)u^{\eta_2}\right)_0 \\
& \lesssim C_s M\left\|u^{\eta_1}-u^{\eta_2}\right\|_0^2 +\eta_1\left\|u^{\eta_1}-u^{\eta_2}\right\|_0^2+|\eta_1-\eta_2|M^2 \\
& \lesssim |\eta_1-\eta_2|M^2+\left(1+C_sM\right)\left\|u^{\eta_1}-u^{\eta_2}\right\|_0^2,
\end{align*}
where $M=\sup_{[0,T]}\rho(t)$, with $\rho$ defined as in Lemma \ref{LemmaCBO1}. Thus integrating the above expression and applying Gronwall's inequality we get
\begin{equation}\label{CBOequa2}
\left\|u^{\eta_1}(t)-u^{\eta_2}(t) \right\|_{0}^2 \lesssim \left|\eta_1-\eta_2\right|M^2Te^{\left(1+C_sM\right)T},
\end{equation}
From \eqref{CBOequa2} we have that there exists $u^{0}\in C\left([0,T];L^2(\mathbb{R})\right)$, such that $\lim_{\eta\to 0^{+}} u^{\eta}=u^{0}$ exists in $L^2(\mathbb{R})$ uniformly over $[0,T]$. 

On the other hand, for $\epsilon>0$ and $\psi\in H^s(\mathbb{R})$, there exist $\psi_{\epsilon}\in \mathcal{S}(\mathbb{R})$ such that $\left\|\psi-\psi_{\epsilon}\right\|_{H^s}<\epsilon$. Then combining \eqref{CBOequa2} and Lemma \ref{LemmaCBO1} it follows that
\begin{align*}
\left|\left\langle u^{\eta_1}-u^{\eta_2},\psi\right\rangle_{H^s}\right| & \leq \left|\left\langle u^{\eta_1}-u^{\eta_2},\psi-\psi_{\epsilon}\right\rangle_{H^s}\right|+\left|\left\langle u^{\eta_1}-u^{\eta_2},\psi_{\epsilon}\right\rangle_{H^s}\right| \\
& \leq \left\|u^{\eta_1}-u^{\eta_2}\right\|_{H^s}\left\|\psi-\psi_{\epsilon}\right\|_{H^s}+\left\|u^{\eta_1}-u^{\eta_2}\right\|_{0}\left\|\psi_{\epsilon}\right\|_{H^{2s}} \\
& \leq 2\epsilon M+C\left(s,\left\|\phi\right\|_{H^s}\right)\left|\eta_1-\eta_2\right|
\end{align*}
so that
$$\lim_{\eta_1,\eta_2 \to 0^{+}} \left\langle u^{\eta_1}-u^{\eta_2},\psi\right\rangle_{H^s}=0.$$
Then, $\left(u^{\eta}\right)_{\eta>0}$ is a weakly Cauchy Net in $H^s(\mathbb{R})$, which converges uniformly over $[0,T]$. Since $H^s(\mathbb{R})$ is reflexive, there exists $\tilde{u^0}\in C_w\left([0,T];H^s(\mathbb{R})\right)$, such that $\lim_{\eta \to 0^{+}}\left\langle u^{\eta},\psi\right\rangle_{H^s}=\left\langle \tilde{u^0},\psi\right\rangle_{H^s}$, for all $\psi \in H^s(\mathbb{R})$ uniformly on $[0,T]$.

From the fact that $\left(L^2(\mathbb{R})\right)'\subseteq \left(H^s(\mathbb{R})\right)'$, we have that $u^{\eta}\rightharpoonup\tilde{u^0}$ as $\eta $ tends to zero in $L^2(\mathbb{R})$, but since strong convergence implies weak convergence, we also have that $u^{\eta}\rightharpoonup u^0$ in $L^2(\mathbb{R})$. Then by uniqueness $\tilde{u^0}(t)=u^{0}(t)$ for each $t\in [0,T]$. Moreover, 
\begin{equation}
\left\|u^0(t)\right\|_{H^s}\leq \sup_{\left\|\psi\right\|_{H^s}=1}\left|\left\langle \lim_{\eta \to 0^{+}} u^{\eta},\psi\right\rangle_{H^s} \right|\leq \rho(t),
\end{equation}
for all $t\in[0,T]$. Next we will prove that $u^0\in C_w^1\left([0,T];H^{s-2}(\mathbb{R})\right)$. Since $u^{\eta}\in H^{\infty}(\mathbb{R})$ for all $t\in \left(0,T\right)$ and $\eta>0$, we have that for all $\psi \in H^{s-2}(\mathbb{R})$,
$$\left\langle u^{\eta},\psi\right\rangle_{s-2}=\left\langle \phi,\psi\right\rangle_{s-2}-\int_{0}^t \left\langle u^{\eta}u^{\eta}_x(t')-Q_{\eta}u^{\eta}(t'),\psi\right\rangle_{s-2} \, dt',$$
where $Q_{\eta}=-\beta\mathcal{H}\partial_{xx}-\eta\left(\mathcal{H}\partial_x-\partial_{xx}\right)$. Since $u^{\eta}\to u^{0}$ in $L^2(\mathbb{R})$ and $u^{\eta}\rightharpoonup u^{0}$ in $H^{s}(\mathbb{R})$, it follows that $Q_{\eta}u^{\eta}\rightharpoonup Q_{0}u^0$ in $H^{s-2}(\mathbb{R})$ and $u^{\eta}\rightharpoonup u^{0}$ in $H^{s-1}(\mathbb{R})$. Then, when $\eta \to 0^{+}$ we deduce 
\begin{align*}
\left\langle u^{0},\psi\right\rangle_{s-2}&=\left\langle \phi,\psi\right\rangle_{s-2}-\int_{0}^t \left\langle u^{0}u^{0}_x(t')-Q_{0}u^{0}(t'),\psi\right\rangle_{s-2} \, dt' \\
&=\left\langle \phi,\psi\right\rangle_{s-2}-\int_{0}^t \left\langle u^{0}u^{0}_x(t')+\beta\mathcal{H}u_{xx}^{0}(t'),\psi\right\rangle_{s-2}\, dt'.
\end{align*}
Therefore, $u^0\in C_w\left([0,T];H^s(\mathbb{R})\right)\cap C_w^1\left([0,T];H^{s-2}(\mathbb{R})\right)$ and satisfies the integral equation weakly. 

Since the application $t\in[0,T]\mapsto u^{0}(t)u^{0}_x(t)+\beta\mathcal{H}u_{xx}^{0}(t)$ is weakly continuous, Bochner-Pettis' Theorem implies that it is strongly measurable. Thus it follows that
\begin{equation}\label{CBOequa3}
u^0(t)=\phi-\int_{0}^t u^{0}u^{0}_x(t')+\beta\mathcal{H}u_{xx}^{0}(t')\, dt',
\end{equation}
which implies that $u^0\in AC\left([0,T];H^{s-2}(\mathbb{R})\right)$.

Finally, to prove the uniqueness, let $v\in C\left([0,T];L^2(\mathbb{R})\right)\cap C_w\left([0,T];H^s(\mathbb{R})\right)\cap C_w^1\left([0,T];H^{s-2}(\mathbb{R})\right)$ be a solution of the Benjamin-Ono equation with $v(0)=\psi$. Since $v$ is strongly differentiable with respect to $t$ in $L^2(\mathbb{R})$, we get 
\begin{align*}
\frac{1}{2}\frac{d}{dt}\left\|u^{0}(t)-v(t)\right\|_0^2&=\left(u^{0}_t-v_t,u^{0}-v\right)_0 \\
& =\frac{1}{4}\left(\partial_x\left(u^0+v\right)_0 ,\left(u^{0}-v\right)^2\right)_0 \\
& \leq \frac{C_s}{2}M \left\|u^{0}(t)-v(t)\right\|_0^2,
\end{align*}
where $M=\sup_{t\in [0,T]}\left\|u^{0}(t)\right\|_{H^s}+\sup_{t\in [0,T]}\left\|v(t)\right\|_{H^s}$. Thus, integrating the above expression and applying Gronwall's inequality we obtain
\begin{align*}
\left\|u^{0}(t)-v(t)\right\|_0^2\leq \left\|\phi-\psi\right\|_{0}^2e^{C_sM T}.
\end{align*}
Uniqueness follows on taking $\phi=\psi$.
\end{proof}

\begin{proof}[Proof Theorem \ref{convergeBO}]

From Lemma \ref{LemmaCBO2} we have that $u^0=\lim_{\eta \to 0^{+}} u^{\eta}$ exists in the class described in this lemma, and is the weak solution of the Benjamin-Ono equation. We claim that $u^0\in C\left([0,T];H^s(\mathbb{R})\right)$. Let $\psi\in H^{s}\left(\mathbb{R}\right)$ be such that $\left\|\psi\right\|_{H^s}=1$. Therefore
$$|\left(\phi,\psi\right)_s|\leq \liminf_{t\to 0^{+}} \left\|u(t)\right\|_{H^s}\leq \limsup_{t\to 0^{+}} \left\|u(t)\right\|_{H^s}\leq \lim_{t\to 0^{+}} \rho(t)=\left\|\phi\right\|_{H^s}.$$
Thus, taking the sup over $\left\|\psi\right\|_{H^s}=1$, we deduce that $\lim_{t\to 0^+}\left\|u(t)\right\|_{H^s}=\left\|\phi\right\|_{H^s}$. Combining this result and the fact that $u^0(t)\rightharpoonup \phi$ in $H^s(\mathbb{R})$, we conclude the continuity at the origin. Let $t^{*}\in \left(0,T\right)$ be fixed, we obtain right continuity at $t^{*}$ from continuity at the origin and uniqueness. Left continuity is deduced using that $u(t^{*}-t,-x)$ is a solution of the Benjamin-Ono equation for any fixed $t^{*}\in(0,T]$. 

On the other hand, since $uu_x+\beta \mathcal{H}u_{xx}\in H^{s-2}(\mathbb{R})$, then by \eqref{CBOequa3} it follows that $u\in C^1\left([0,T];H^{s-2}(\mathbb{R})\right)$. Uniqueness is proved as in Lemma \ref{LemmaCBO2}. Finally, the proof that the solution depends continuously on the initial data is similar to the proof of Theorem 3.1 in \cite{Duque}.
\end{proof}
\section{Decay Properties of the Solution}

In this section we reduce asymptotic questions to existence theorems by solving the equation in appropriate function spaces. As we shall see below the combined effects of the nonlinearity and the non-smoothness of the symbol of the Hilbert transform determine an upper bound for the rate of decay of the solutions of the equation (\ref{cl}) as $|x|\to \infty$. First of all, we will examine the function $u(t)=S(t)\phi$, where $\phi \in \mathcal{F}_{r,r}$ is defined in (\ref{sobpeso}), in order to obtain the semigroup estimates. Next lemma provides some formulas for derivatives of the semigroup associated to the CL equation. They easily follow from a direct computation.
\begin{lemma}\label{lemdecaida}
Let $E(\xi,t)=e^{i\,q(\xi)\,t -p(\xi)\,t}$ where $p(\xi)=\eta (\xi^2-|\xi|)$ and $q(\xi)=\beta \xi |\xi|$. Then,
\begin{align}
\partial_{\xi}E(\xi,t)&=t \bigl[(\eta + 2i\beta \xi )\,sgn(\xi)-2\eta \xi \bigr]E(\xi,t)  \label{uno} \\
\partial_{\xi}^2E(\xi,t)&=2\eta t\delta + 2t\bigl[ i\beta\,sgn(\xi)-\eta \bigl] E(\xi,t) + t^2 \bigl[(\eta + 2i\beta \xi )\,sgn(\xi)-2\eta \xi \bigr]^2E(\xi,t)  \label{dos}\\
\partial_{\xi}^3E(\xi,t)&=2\eta t\delta' + 4i\beta t\delta + 3t^2\bigl[(-2\eta^2-8i\beta \eta \xi)\,sgn(\xi)+2i\beta \eta+4(\eta^2-\beta^2)\xi \bigr]E(\xi,t) +\notag \\
& + t^3 \bigl[(\eta + 2i\beta \xi )\,sgn(\xi)-2\eta \xi \bigr]^3E(\xi,t) \label{tres} \\
\partial_{\xi}^4E(\xi,t)&=2\eta t\delta'' + 4i\beta t\delta' + 2\eta^2t^2(\eta t-6)\delta + 12t^2 \bigl[-2i\beta \eta sgn(\xi)+ \eta^2-\beta^2 \bigl] E(\xi,t) +\notag \\
& +12t^3\bigl[ (i\beta \eta^2+4\eta (\eta^2-\beta^2)\xi+4i\beta (3\eta^2-\beta^2)\xi^2) sgn(\xi) + \notag \\
& \qquad \quad -\eta^3-8i\beta \eta^2 \xi +4\eta(3\beta^2-\eta^2)\xi^2 \bigr]E(\xi,t) +\notag \\
& + t^4\bigl[(\eta + 2i\beta \xi )\,sgn(\xi)-2\eta \xi \bigr]^4E(\xi,t) . \label{cuatro}
\end{align}
Moreover for $j\geq 4$, the $j$-th derivative of $E(\xi,t)$ has the form,
\begin{align}\label{cinco}
\partial_{\xi}^jE(\xi,t)=&2\eta t\delta^{(j-2)}+4i\beta t\delta^{(j-3)}+ \sum_{k=0}^{j-4}p_k(t)\delta^{(k)}+ \sum_{k=0}^{j-1}t^k \bigl[r_k(\xi)\,sgn(\xi)+s_k(\xi) \bigr] E(\xi,t) \notag \\
&+t^j \bigl[(\eta + 2i\beta \xi )\,sgn(\xi)-2\eta \xi \bigr]^jE(\xi,t)
\end{align}
where $\delta$ is the Dirac delta function and $p_k(t)$, $r_k(\xi)$ and $s_k(\xi)$ are polynomials satisfying $deg(p_k(t))\leq j-1$, $deg(r_k(\xi))\leq j-2$ y $deg(s_k(\xi))\leq j-2$.
\end{lemma}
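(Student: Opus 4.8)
The plan is to verify Lemma~\ref{lemdecaida} by repeatedly differentiating the explicit symbol $E(\xi,t)=e^{i q(\xi)t - p(\xi)t}$ with respect to $\xi$, carefully tracking the distributional contributions that arise because $q$ and $p$ contain $|\xi|$. First I would record the basic facts: writing $q(\xi)=\beta\xi|\xi|$ and $p(\xi)=\eta(\xi^2-|\xi|)$, the exponent is $\Phi(\xi,t):=i q(\xi)t-p(\xi)t$, which is continuous in $\xi$ but only piecewise smooth. Away from the origin, $\partial_\xi\Phi=t\bigl[(\eta+2i\beta\xi)\operatorname{sgn}(\xi)-2\eta\xi\bigr]$; call this $a(\xi,t)$. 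Since $\partial_\xi|\xi|=\operatorname{sgn}(\xi)$ classically and $\partial_\xi\operatorname{sgn}(\xi)=2\delta$ in $\mathcal S'(\mathbb R)$, one gets $\partial_\xi E = a(\xi,t)E$, which is \eqref{uno} — note there is no delta here because the coefficient of $\operatorname{sgn}(\xi)$ in $a$, namely $t(\eta+2i\beta\xi)$, multiplied by $E$ at $\xi=0$ reduces to $t\eta E(0,t)=t\eta$ times $\operatorname{sgn}$, whose $\xi$-derivative produces the first delta only at the next order.

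Next I would proceed inductively, at each stage differentiating the previous formula term by term using the product rule, the chain rule $\partial_\xi E = aE$, and the distributional identities $\partial_\xi\operatorname{sgn}(\xi)=2\delta$, $\partial_\xi(f(\xi)\delta^{(k)})=f(0)\delta^{(k+1)}-\text{(lower order deltas with derivatives of }f)$, together with $\xi\delta=0$ and more generally the standard rules for multiplying distributions by smooth functions. The key bookkeeping point is to separate, at order $j$, the four structurally distinct pieces appearing in \eqref{cinco}: (i) the two ``leading'' delta terms $2\eta t\,\delta^{(j-2)}$ and $4i\beta t\,\delta^{(j-3)}$, whose coefficients are \emph{linear} in $t$ and come from differentiating the $\operatorname{sgn}$ inside $a$ one more time against the smooth-part coefficients; (ii) a finite sum of lower-order deltas $\sum_{k=0}^{j-4}p_k(t)\delta^{(k)}$ with polynomial coefficients in $t$; (iii) the ``absolutely continuous'' part $\sum_{k=0}^{j-1}t^k[r_k(\xi)\operatorname{sgn}(\xi)+s_k(\xi)]E(\xi,t)$; and (iv) the top term $t^j a(\xi,t)^j E(\xi,t)/t^j = t^j[(\eta+2i\beta\xi)\operatorname{sgn}(\xi)-2\eta\xi]^j E$. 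For the induction I would check: differentiating (iv) at order $j$ gives (iv) at order $j+1$ plus a term $j t^{j-1} a^{j-1}(\partial_\xi a)E$ which, since $\partial_\xi a = t[2i\beta\operatorname{sgn}(\xi)-2\eta] + t(\eta+2i\beta\xi)\cdot 2\delta$, splits into a contribution to (iii) at order $j+1$ and, via the $2t\eta\delta$ piece evaluated against $a^{j-1}E$ at $\xi=0$, a contribution to the delta terms; differentiating (iii) produces more of (iii), more deltas of order $\le j-1$ (hence absorbed into (ii) at order $j+1$), and — crucially — the single new top delta $\delta^{(j-2)}$ with coefficient \emph{linear} in $t$, coming only from the $k=j-1$ summand's $r_{j-1}\operatorname{sgn}$ factor; differentiating (ii) and (i) only shifts delta orders up by one and keeps the polynomial degree bounds. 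Throughout I would keep careful track of the degree constraints $\deg_t p_k\le j-1$, $\deg_\xi r_k\le j-2$, $\deg_\xi s_k\le j-2$, verifying they are preserved when passing from $j$ to $j+1$.

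For the explicit low-order cases \eqref{dos}, \eqref{tres}, \eqref{cuatro} I would simply carry out the differentiation directly, using $\partial_\xi(\xi\operatorname{sgn}(\xi))=\partial_\xi|\xi|=\operatorname{sgn}(\xi)$, $\partial_\xi(\operatorname{sgn}(\xi))=2\delta$, $\xi\delta=0$, $\xi^2\delta'=0$, $\xi\delta'=-\delta$, etc., and collecting terms; for instance \eqref{dos} follows from $\partial_\xi^2 E=\partial_\xi(aE)=(\partial_\xi a)E+a^2E$ with $\partial_\xi a = 2t[i\beta\operatorname{sgn}(\xi)-\eta]+2\eta t\delta$ (the $2i\beta\xi\cdot\operatorname{sgn}(\xi)$ term contributing $2i\beta|\xi|$ whose derivative is $2i\beta\operatorname{sgn}(\xi)$, and the $\eta\operatorname{sgn}(\xi)$ term contributing $2\eta\delta$, while the coefficient of $\delta$ simplifies since $\xi\delta=0$). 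The analogous but longer computations give \eqref{tres} and \eqref{cuatro}; these also serve as the base cases $j=3,4$ of the induction and let one read off $p_0,p_1,\dots$, $r_k$, $s_k$ for small $j$.

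The main obstacle I anticipate is purely organizational rather than conceptual: making the inductive step airtight requires being scrupulous about which pieces of $\partial_\xi$(order-$j$ formula) land in which of the four buckets of the order-$(j+1)$ formula, especially the claim that only the ``$t^1$''-weighted deltas $\delta^{(j-1)}$ and $\delta^{(j-2)}$ survive with linear-in-$t$ coefficients while everything else with a delta of order $\le j-2$ gets swept into the polynomial-coefficient sum, and verifying the degree bounds $\deg_t p_k\le j$, $\deg_\xi r_k, s_k\le j-1$ are not violated. A secondary subtlety is the repeated use of identities like $\xi^m\delta^{(k)}=0$ for $m>k$ and $\xi\delta^{(k)}=-k\delta^{(k-1)}$ to discard or reduce terms that would otherwise appear to break the stated form; one must confirm these reductions genuinely occur (i.e. that the spurious coefficients really are multiples of $\xi$ when evaluated, which follows from $a(0,t)=t\eta\operatorname{sgn}$-type structure). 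Since the lemma states only the \emph{form} of the derivatives and not the precise coefficients for general $j$, the proof need not compute $p_k,r_k,s_k$ explicitly beyond $j\le 4$; it suffices to exhibit the recursion and the invariants, so the write-up can be kept to the direct computations for \eqref{uno}--\eqref{cuatro} plus a short inductive paragraph establishing \eqref{cinco}.
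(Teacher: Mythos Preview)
Your proposal is correct and takes essentially the same approach as the paper: the paper's entire proof is the single sentence ``This result follows easily from the chain rule,'' so your plan of direct term-by-term differentiation using $\partial_\xi\operatorname{sgn}(\xi)=2\delta$, the identities $\xi\delta=0$, $\xi\delta'=-\delta$, etc., together with a short induction for \eqref{cinco}, is exactly what is intended, only spelled out in far more detail than the authors supply.
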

\begin{proof}
This result follows easily from the chain rule.
\end{proof}
\begin{prop} \label{emuc0enfsr}
Let $\eta >0$ and $\beta>0$ be fixed. Then, \\ \\
(a.) $S:[0,+\infty )\longrightarrow \textbf{B}(\mathcal{F}_{r,r})$, $r=0,1$, is a $C^0$-semigroup and satisfies the estimate,
\begin{equation}
\nor{S(t)\phi}{\mathcal{F}_{r,r}} \leq \Bigl(e^{\eta t}\Theta_r(t)+ C_{\eta, \beta} t^{r/2}\Bigr)\,\nor{\phi}{\mathcal{F}_{r,r}}, \label{e6}
\end{equation}
for all $\phi \in \mathcal{F}_{r,r}$, where $\Theta_r(t)$ is a polynomial of degree $r$ with positive coefficients that depend only on $\eta $, $\beta$ and $r$. \\ \\
(b.) If $r\geq 2$ and $\phi \in \mathcal{F}_{r,r}$, the function $S(t)\phi$ belongs to $C([0,\infty);\mathcal{F}_{r,r})$ si, y s\'olo si,
\begin{align}
(\partial_{\xi}^j \widehat{\phi})(0)=0, \qquad \qquad j=0,1,2,\cdots,r-2. \label{e7}
\end{align}
In this case an estimate of the form (\ref{e6}) also holds
\begin{equation}
\nor{S(t)\phi}{\mathcal{F}_{r,r}} \leq \Bigl(e^{\eta t}\Theta_r(t)+ \sum_{l=0}^{3r-2}C_{l,\eta,\beta} \,t^{(l-r+2)/2}\Bigr)\,\nor{\phi}{\mathcal{F}_{r,r}}, \label{e6}
\end{equation}
\end{prop}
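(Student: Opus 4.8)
The plan is to carry the whole argument over to the Fourier side. Since $r$ is a non-negative integer, $\nor{f}{\mathcal{F}_{r,r}}\sim \nor{f}{H^r}+\nor{f}{L^2}+\nor{x^rf}{L^2}$, and by Plancherel $\nor{x^rf}{L^2}=\nor{\partial_\xi^r\widehat f}{L^2}$ whenever the right-hand side represents an $L^2$ function. The $H^r$-component is harmless: Proposition \ref{semigroupregular}(b) already gives $\nor{S(t)\phi}{H^r}\leq e^{\eta t}\nor{\phi}{H^r}$ and the strong continuity of $t\mapsto S(t)\phi$ in $H^r$. So everything reduces to controlling
$$\widehat{x^rS(t)\phi}(\xi)=i^r\,\partial_\xi^r\bigl(E(\xi,t)\widehat\phi(\xi)\bigr)=i^r\sum_{k=0}^{r}\binom{r}{k}\bigl(\partial_\xi^{r-k}E(\xi,t)\bigr)\,\partial_\xi^k\widehat\phi(\xi),$$
into which I would substitute the explicit formulas of Lemma \ref{lemdecaida}. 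Two pointwise facts will be used throughout: $|E(\xi,t)|=e^{\eta(|\xi|-\xi^2)t}\leq e^{\eta t/4}$, and, for a term of the regular part of $\partial_\xi^jE$ of the form $t^{l}(\text{polynomial of degree }2l-j\text{ in }\xi)\,E$, the bound $\nor{t^l|\xi|^{2l-j}E(\cdot,t)}{L^\infty}\lesssim_{j,\eta,\beta} t^{j/2}f_{\,l-j/2}(t)$, which is Lemma \ref{LemmaLB1} with $\lambda=l-j/2\geq 0$ (recall $f_\lambda$ is nondecreasing, hence bounded on any $[0,T]$).

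For part (a) there are only the cases $r=0$ and $r=1$. The case $r=0$ is $\mathcal{F}_{0,0}=L^2$ and is exactly Proposition \ref{semigroupregular}(b) with $\Theta_0\equiv 1$. For $r=1$ I would use $\partial_\xi(E\widehat\phi)=(\partial_\xi E)\widehat\phi+E\,\partial_\xi\widehat\phi$; by \eqref{uno}, $\partial_\xi E=t\bigl[(\eta+2i\beta\xi)\mathrm{sgn}(\xi)-2\eta\xi\bigr]E$ carries no Dirac mass, so $\nor{(\partial_\xi E)\widehat\phi}{L^2}\lesssim_{\eta,\beta}\bigl(t\,e^{\eta t/4}+t^{1/2}f_{1/2}(t)\bigr)\nor{\phi}{L^2}$ while $\nor{E\,\partial_\xi\widehat\phi}{L^2}\leq e^{\eta t/4}\nor{x\phi}{L^2}$; combining these with the $H^1$-bound yields an estimate of the announced form with a degree-one $\Theta_1$. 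The semigroup identity $S(t+s)=S(t)S(s)$ is immediate from $E(\xi,t+s)=E(\xi,t)E(\xi,s)$, and strong continuity of $t\mapsto S(t)\phi$ in $\mathcal{F}_{1,1}$ needs only be checked at $t=0$: the $H^1$-part is continuous by Proposition \ref{semigroupregular}, while $\nor{x(S(t)\phi-\phi)}{L^2}\leq \nor{(\partial_\xi E)\widehat\phi}{L^2}+\nor{(E-1)\partial_\xi\widehat\phi}{L^2}$, the first summand tending to $0$ because of its explicit $t$-factor and the second by dominated convergence, since $E(\xi,t)\to 1$ pointwise and $|E-1|$ is uniformly bounded for $t\in[0,1]$.

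For part (b) the decisive new feature is that $\partial_\xi^jE$ contains Dirac masses $\delta^{(m)}$ once $j\geq 2$, as displayed in \eqref{cinco}. Inserting Lemma \ref{lemdecaida} into the Leibniz expansion and using that for smooth $g$ the product $g\,\delta^{(m)}$ is a combination of $\delta^{(0)},\dots,\delta^{(m)}$ with coefficients $g(0),\dots,g^{(m)}(0)$, one finds that the part of $\partial_\xi^r(E\widehat\phi)$ supported at the origin is $\sum_{m=0}^{r-2}a_m(t)\delta^{(m)}$, where $a_{r-2}(t)=2\eta t\,\widehat\phi(0)$ and, in general, $a_{r-2-p}(t)=c_p\,t\,(\partial_\xi^{p}\widehat\phi)(0)+(\text{terms built from }(\partial_\xi^{j}\widehat\phi)(0)\text{ with }j<p)$ with a nonzero constant $c_p$; these are read off by pairing the terms $2\eta t\,\delta^{(j-2)}$, $4i\beta t\,\delta^{(j-3)}$ and $p_k(t)\delta^{(k)}$ of \eqref{cinco} against the smooth factors $\partial_\xi^k\widehat\phi$. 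For the "only if" direction, $S(t)\phi\in\mathcal{F}_{r,r}$ for $t>0$ forces $\partial_\xi^r(E\widehat\phi)$ to be an $L^2$ function, hence to have no singular part, so all $a_m(t)\equiv 0$; a downward induction on $m$, starting from $a_{r-2}(t)=2\eta t\,\widehat\phi(0)$, then gives $(\partial_\xi^j\widehat\phi)(0)=0$ for $j=0,\dots,r-2$. Conversely, under these vanishing conditions every product $(\partial_\xi^k\widehat\phi)\,\delta^{(m)}$ appearing above is zero, because $\partial_\xi^k\widehat\phi$ vanishes at the origin together with its first $m$ derivatives; thus $\partial_\xi^r(E\widehat\phi)$ equals the sum of the regular terms plus $E\,\partial_\xi^r\widehat\phi$, each regular term being estimated by the $L^\infty$-bound from Lemma \ref{LemmaLB1} recalled above and $E\,\partial_\xi^r\widehat\phi$ by $e^{\eta t/4}\nor{x^r\phi}{L^2}$, which produces the announced estimate; continuity of $t\mapsto S(t)\phi$ into $\mathcal{F}_{r,r}$, including at $t=0$, then follows from the same bounds by dominated convergence, since every summand other than $E\,\partial_\xi^r\widehat\phi$ carries a positive power of $t$.

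The step I expect to be the main obstacle is the bookkeeping in part (b): one must track exactly which boundary values $(\partial_\xi^j\widehat\phi)(0)$ feed into each $a_m(t)$, coming from the several families of Dirac terms in \eqref{cinco}, and verify that the net coefficient $c_p$ multiplying the top-order value $(\partial_\xi^{p}\widehat\phi)(0)$ is genuinely nonzero (it is an alternating sum of binomial-type quantities, and in the low-order cases it simplifies to a positive multiple of $\eta$), so that the vanishing is forced for precisely $j=0,\dots,r-2$ and no further index. Once that combinatorial point is settled, both the stated estimate and the $t\to 0^+$ continuity are routine consequences of Lemma \ref{LemmaLB1} and dominated convergence.
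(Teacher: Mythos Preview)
Your proposal is correct and follows exactly the Fourier-side Leibniz approach that the paper invokes by citing I\'orio (Theorem~2.4 in \cite{Iorio}) and \'Alvarez (Lemma~5.2 in \cite{Borys}); the paper gives no independent argument beyond that reference. Regarding the combinatorial point you flag as the main obstacle: the top-order coefficient in $a_{r-2-p}(t)$ comes only from the leading Dirac terms $2\eta t\,\delta^{(r-k-2)}$ in $\partial_\xi^{r-k}E$, and one computes
\[
c_p \;=\; 2\eta\sum_{k=0}^{p}(-1)^{p-k}\binom{r}{k}\binom{r-k-2}{p-k}\;=\;2\eta\,(p+1)\neq 0,
\]
so the downward induction goes through cleanly for every $p=0,\dots,r-2$.
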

\begin{proof}
The proof is similar to the proof of Theorem 2.4 in \cite{Iorio} or Lemma 5.2 in \cite{Borys}.
\end{proof}
\subsection{Theory in $\mathcal{F}_{2,1}(\mathbb{R})$}
Some decay properties of the solution of equation (\ref{cl}) for $\eta>0$ and $\beta>0$ are obtained similarly to those known for the Benjamin-Ono equation (see \cite{Iorio}). Theorem \ref{decaida} is a unique continuation theorem for equation in (\ref{cl}). It implies loss of persistence for CL equation in $\mathcal{F}_{3,3}$, while for the Benjamin-Ono equation this occurs in $\mathcal{F}_{4,4}$. We begin proving a local result for equation (\ref{cl}) in $\mathcal{F}_{2,1}(\mathbb{R})$.
\begin{theorem}
Let $\eta>0$, $\beta >0$ and $\phi \in \mathcal{F}_{2,1}(\mathbb{R})$. Then, there exist $T(\nor{\phi}{\mathcal{F}_{2,1}}, \eta , \beta)>0$ and a unique function $u\in C(0,T;\mathcal{F}_{2,1}(\mathbb{R}))$ satisfying the integral equation
\begin{equation}\label{intequation21}
u(t)=\mathcal{F}_{\eta}(t)\phi - \int_0^t\mathcal{F}_{\eta}(t-t')[u(t')u_x(t')]\,dt' \quad t\geq 0.
\end{equation}
\end{theorem}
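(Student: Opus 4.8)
The plan is to solve the integral equation \eqref{intequation21} by Banach's fixed point theorem applied to
\[
\Psi(u)(t)=\mathcal{F}_{\eta}(t)\phi-\tfrac12\int_0^t \mathcal{F}_{\eta}(t-t')\,\partial_x\!\big(u^2(t')\big)\,dt'
\]
on a closed ball of $Y_T:=C\big([0,T];\mathcal{F}_{2,1}(\mathbb{R})\big)$ equipped with the norm $\sup_{t\in[0,T]}\nor{\cdot}{\mathcal{F}_{2,1}}$, following the scheme of the proof of Theorem \ref{mainresult}; here $\mathcal{F}_{\eta}(t)=S(t)$ is the semigroup \eqref{semigrupos}. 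Since $2>\tfrac12$ no time-weighted space is needed, and I will repeatedly use that $H^1(\mathbb{R})$ and $H^2(\mathbb{R})$ are Banach algebras and that $H^1(\mathbb{R})\hookrightarrow L^\infty(\mathbb{R})$. Two ingredients are required: a linear bound of the form $\nor{S(t)\phi}{\mathcal{F}_{2,1}}\lesssim_{\eta,\beta}(1+t)e^{\eta t}\nor{\phi}{\mathcal{F}_{2,1}}$ together with strong continuity of $t\mapsto S(t)$ on $\mathcal{F}_{2,1}(\mathbb{R})$, and a bilinear estimate for the Duhamel term.

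For the linear part I would argue as in Proposition \ref{emuc0enfsr}, the point being that here it is genuinely easier because the weight has order one. On the $H^2$ side, $\nor{S(t)\phi}{H^2}\le e^{\eta t}\nor{\phi}{H^2}$ by Proposition \ref{semigroupregular}(b). On the $L^2_1$ side, $\widehat{x\,S(t)\phi}=i\big[(\partial_\xi E)(\cdot,t)\,\widehat\phi+E(\cdot,t)\,\partial_\xi\widehat\phi\big]$; by formula \eqref{uno} the factor $\partial_\xi E$ contains \emph{no} Dirac mass — the deltas in Lemma \ref{lemdecaida} first appear from the second $\xi$-derivative on, i.e.\ for weights $r\ge 2$, which is exactly why $\mathcal{F}_{2,1}$ needs no compatibility condition — and since $|(\eta+2i\beta\xi)\operatorname{sgn}\xi-2\eta\xi|\lesssim_{\eta,\beta}\langle\xi\rangle$ and $|E(\xi,t)|=e^{-\eta(\xi^2-|\xi|)t}\le e^{\eta t/4}$, one gets $\nor{x\,S(t)\phi}{L^2}\lesssim_{\eta,\beta}(1+t)e^{\eta t/4}\big(\nor{\phi}{H^1}+\nor{\phi}{L^2_1}\big)$. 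Strong continuity follows by dominated convergence (each $\xi$-factor is continuous in $t$ and is dominated, for $t$ in a compact interval, by a fixed $L^2$-multiplier acting on $\widehat\phi$ or $\partial_\xi\widehat\phi$), and the semigroup property is immediate from $E(\xi,t+s)=E(\xi,t)E(\xi,s)$.

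The heart of the matter is the bilinear estimate
\[
\Big\|\int_0^t \mathcal{F}_{\eta}(t-t')\,\partial_x(fg)(t')\,dt'\Big\|_{Y_T}\lesssim_{\eta,\beta}\big(T+T^{1/2}\big)e^{\eta T/4}\,\nor{f}{Y_T}\nor{g}{Y_T},
\]
for which it suffices to bound $\nor{\mathcal{F}_{\eta}(\tau)\,\partial_x(fg)}{\mathcal{F}_{2,1}}$ by $(1+\tau^{-1/2})e^{\eta\tau/4}\nor{f}{\mathcal{F}_{2,1}}\nor{g}{\mathcal{F}_{2,1}}$ and integrate the resulting integrable singularity. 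For the $H^2$ component, $\partial_x(fg)\in H^1$ with $\nor{\partial_x(fg)}{H^1}\lesssim\nor{f}{H^2}\nor{g}{H^2}$, and the parabolic smoothing $S(\tau)\colon H^1\to H^2$ of Proposition \ref{semigroupregular}(a) with $\lambda=1$ costs only a factor $(\eta\tau)^{-1/2}$. For the weighted component I would write, using \eqref{uno} and $\partial_\xi\widehat{fg}=-i\widehat{x(fg)}$,
\[
\widehat{x\,S(\tau)\partial_x(fg)}=i(\partial_\xi E)\,(i\xi)\widehat{fg}+iE\,\widehat{fg}+E\,\xi\,\widehat{x(fg)};
\]
the first term has symbol of size $\tau\,O(\langle\xi\rangle^2)$, controlled by $\nor{fg}{H^2}\lesssim\nor{f}{H^2}\nor{g}{H^2}$ after $|E|\le e^{\eta\tau/4}$; the second is $\lesssim e^{\eta\tau/4}\nor{fg}{L^2}\lesssim e^{\eta\tau/4}\nor{f}{H^1}\nor{g}{H^1}$; the third uses $|\xi E(\xi,\tau)|\lesssim_{\eta}(1+\tau^{-1/2})e^{\eta\tau/4}$ together with $\nor{x(fg)}{L^2}\le\nor{xf}{L^2}\nor{g}{L^\infty}\lesssim\nor{f}{L^2_1}\nor{g}{H^1}$. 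Summing gives the claimed pointwise-in-$\tau$ bound, and the Bochner integral defining the Duhamel term is well defined since the integrand is continuous into $\mathcal{F}_{2,1}$ on $(0,t)$ with integrable norm.

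Given these two estimates the conclusion is routine: for $u,v\in Y_T$ one has $\nor{\Psi(u)}{Y_T}\le C\nor{\phi}{\mathcal{F}_{2,1}}+C(T+T^{1/2})e^{\eta T/4}\nor{u}{Y_T}^2$ and $\nor{\Psi(u)-\Psi(v)}{Y_T}\le C(T+T^{1/2})e^{\eta T/4}\nor{u-v}{Y_T}\nor{u+v}{Y_T}$, so with $\gamma=2C\nor{\phi}{\mathcal{F}_{2,1}}$ and $T=T(\nor{\phi}{\mathcal{F}_{2,1}},\eta,\beta)$ small enough, $\Psi$ is a contraction on the ball of radius $\gamma$ in $Y_T$, yielding the unique fixed point $u\in Y_T$; uniqueness in all of $Y_T$ follows either directly or by the $F_r$-iteration used in Step 3 of the proof of Theorem \ref{mainresult}, and $u\in C([0,T];\mathcal{F}_{2,1}(\mathbb{R}))$ is automatic from $u=\Psi(u)$. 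I expect the main obstacle to be the weighted $L^2_1$-estimate: one must verify that $\partial_\xi$ acting on $\operatorname{sgn}\xi$ produces no Dirac mass at this level of weight (a phenomenon that, at weights $r\ge 2$, forces the compatibility conditions of Proposition \ref{emuc0enfsr}), and that the polynomial growth of the symbol derivatives is absorbed by the $H^2$-regularity and the Gaussian factor in $|E|$, leaving only integrable $\tau^{-1/2}$ singularities in the Duhamel integral.
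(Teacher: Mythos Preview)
Your proposal is correct. In the paper's LaTeX source the proof environment for this theorem is actually empty, so there is no argument to compare against; your contraction-mapping scheme on $C([0,T];\mathcal{F}_{2,1})$, using Proposition~\ref{semigroupregular} for the $H^2$-smoothing, formula~\eqref{uno} (which, as you correctly stress, contains no Dirac mass) for the $L^2_1$-part of the linear and bilinear estimates, and the $H^2$-algebra property for the nonlinearity, is exactly the argument the surrounding framework (Proposition~\ref{semigroupregular}, Lemma~\ref{lemdecaida}, Proposition~\ref{emuc0enfsr}) is set up to support, and it goes through as written with the integrable $\tau^{-1/2}$ singularity in the Duhamel term.
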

\begin{proof}
\end{proof}
\begin{theorem}\label{globalf21}
Let $\phi \in \mathcal{F}_{2,1}(R)$. Then there exists an unique solution $u\in C([0,\infty);\mathcal{F}_{2,1}(\mathbb{R}))$ of the equation (\ref{cl}) such that $\partial_t u \in C(0,\infty; \mathcal{F}_{0,1}(\mathbb{R}))$.  
\end{theorem}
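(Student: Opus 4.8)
The plan is to combine the preceding local existence theorem in $\mathcal{F}_{2,1}(\mathbb{R})$ with the global well‑posedness in $H^2(\mathbb{R})$ provided by Theorem \ref{mainresult} (the case $s=2$, $\eta>0$), and to supply the one missing ingredient, namely an a priori bound on the weight, by a Duhamel argument combined with a singular Gronwall inequality. Since $\mathcal{F}_{2,1}(\mathbb{R})\hookrightarrow H^2(\mathbb{R})$ and, by uniqueness in $H^2(\mathbb{R})$, the $\mathcal{F}_{2,1}$–solution coincides with the $H^2$–solution, Theorem \ref{mainresult} yields for every $\widetilde T>0$ a finite quantity $R(\widetilde T):=\sup_{t\in[0,\widetilde T]}\|u(t)\|_{H^2}<\infty$. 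Hence it suffices to control $\|xu(t)\|_{L^2}$ on every bounded interval: granting this, $\sup_{[0,\widetilde T]}\|u(t)\|_{\mathcal F_{2,1}}<\infty$ for all $\widetilde T$, and the blow‑up alternative from the local theorem (where the existence time depends only on $\|\phi\|_{\mathcal F_{2,1}}$, $\eta$, $\beta$) produces a global solution $u\in C([0,\infty);\mathcal{F}_{2,1}(\mathbb{R}))$, unique because it is already unique in $H^2(\mathbb{R})$.

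For the weighted estimate, taking Fourier transforms and using $\widehat{xg}(\xi)=i\partial_\xi\widehat g(\xi)$ together with formula \eqref{uno} of Lemma \ref{lemdecaida} — which, crucially, contains no Dirac mass, one $\xi$–derivative being all that falls on the non‑smooth symbol $\mathrm{sgn}(\xi)$ (this is exactly why weight index $1$ is admissible, unlike $\mathcal F_{r,r}$ with $r\ge 2$; cf. Proposition \ref{emuc0enfsr}(b)) — one obtains the commutation identity
\begin{equation*}
x\,S(\tau)g=S(\tau)(xg)+\tau\,m_\tau(D_x)g,\qquad
\widehat{m_\tau(D_x)g}(\xi)=i\bigl[(\eta+2i\beta\xi)\,\mathrm{sgn}(\xi)-2\eta\xi\bigr]E(\xi,\tau)\,\widehat g(\xi),
\end{equation*}
and an analogous one for $x\,S(\tau)\partial_x h$. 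Since $\||\xi|^\mu E(\xi,\tau)\|_{L^\infty}\lesssim_{\eta,\beta}(1+\tau^{-\mu/2})e^{c\tau}$ by Lemma \ref{LemmaLB1}, this gives $\|x\,S(\tau)\phi\|_{L^2}\lesssim_{\eta,\beta}e^{c\tau}\|\phi\|_{\mathcal F_{2,1}}$ and, with an integrable singularity at $\tau=0$,
\begin{equation*}
\|x\,S(\tau)\partial_x h\|_{L^2}\lesssim_{\eta,\beta}e^{c\tau}\bigl(\|h\|_{L^2}+\tau^{-1/2}\|xh\|_{L^2}\bigr),\qquad 0<\tau\le 1 .
\end{equation*}
Applying these to $u(t)=S(t)\phi-\tfrac12\int_0^tS(t-t')\partial_x(u^2)(t')\,dt'$ and using $\|u^2(t')\|_{L^2}\lesssim\|u(t')\|_{H^2}^2$, $\|xu^2(t')\|_{L^2}\lesssim\|u(t')\|_{H^2}\|xu(t')\|_{L^2}$, I would arrive at
\begin{equation*}
\|xu(t)\|_{L^2}\lesssim_{\widetilde T,\eta,\beta}\|\phi\|_{\mathcal F_{2,1}}+R(\widetilde T)^2\,t+R(\widetilde T)\int_0^t(t-t')^{-1/2}\|xu(t')\|_{L^2}\,dt',\qquad t\in[0,\widetilde T],
\end{equation*}
and a standard singular‑kernel Gronwall argument closes the bound.

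Finally, for the time derivative, write $\partial_t u=-uu_x-\beta\mathcal{H}u_{xx}-\eta\mathcal{H}u_x+\eta u_{xx}$ for $t>0$. By Proposition \ref{semigroupregular} one has $u\in C^1((0,\infty);H^\infty(\mathbb{R}))$, and the commutation identity above, now exploiting the full parabolic smoothing of $m_\tau(D_x)$ and of $S(\tau)$ for $\tau>0$, propagates the weight along the Duhamel formula, yielding $xu(t)\in H^\infty(\mathbb{R})$ for every $t>0$ with continuous dependence on $t$; equivalently $u(t)\in\mathcal F_{r,1}(\mathbb{R})$ for all $r$. Since $\widehat{u_x}(0)=\widehat{u_{xx}}(0)=0$, the commutator $[\mathcal{H},x]$ produces no boundary term on $u_x,u_{xx}$, so $x\mathcal{H}u_x=\mathcal{H}(xu_x)\in L^2$, $x\mathcal{H}u_{xx}=\mathcal{H}(xu_{xx})\in L^2$; moreover $x\,uu_x=\tfrac12\partial_x(xu^2)-\tfrac12u^2\in L^2$ and $xu_{xx}=(xu)_{xx}-2u_x\in L^2$, all depending continuously on $t\in(0,\infty)$. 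Hence $x\,\partial_tu\in C((0,\infty);L^2(\mathbb{R}))$, i.e. $\partial_t u\in C((0,\infty);\mathcal{F}_{0,1}(\mathbb{R}))$. The step I expect to be the main obstacle is the weighted a priori bound: one must verify carefully that commuting multiplication by $x$ through both $S(\tau)$ and $\mathcal{H}$ generates only the benign lower‑order corrections above (no Dirac mass, because only a single $\xi$–derivative hits $\mathrm{sgn}(\xi)$), and that the time‑singular kernel $(t-t')^{-1/2}$ is weakly singular so that the Gronwall step and the $t>0$ smoothing go through.
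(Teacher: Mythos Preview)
Your argument is correct, but it differs from the paper's route. The paper obtains the weighted a~priori bound by a direct energy estimate: it differentiates $\|xu(t)\|_{L^2}^2$ in time using the equation, then controls each of the four resulting inner products $(xu,xuu_x)_0$, $(xu,x\mathcal{H}u_{xx})_0$, $(xu,x\mathcal{H}u_x)_0$, $(xu,xu_{xx})_0$ by $\|u\|_{\mathcal F_{2,1}}^2$ (the Hilbert--transform terms are handled through $x\mathcal{H}f=\mathcal{H}(xf)$ for $f=u_x,u_{xx}$, valid because $\hat f(0)=0$, together with the identities $xu_{xx}=(xu)_{xx}-2u_x$, $xu_x=(xu)_x-u$ and the skew--adjointness of $\mathcal{H}$). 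This yields
\[
\frac{d}{dt}\|xu(t)\|_{L^2}^2\;\lesssim_{\eta,\beta}\;\|u(t)\|_{\mathcal F_{2,1}}^2,
\]
and an \emph{ordinary} Gronwall inequality (using the global $H^2$ bound from Theorem~\ref{mainresult}) closes the argument.

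You instead commute $x$ through the semigroup at the level of the Duhamel formula, using formula~\eqref{uno} for $\partial_\xi E$ and the parabolic smoothing of $S(\tau)$, which produces an integral inequality with weakly singular kernel $(t-t')^{-1/2}$ and then requires a singular Gronwall lemma. Both methods are sound; the paper's energy approach is shorter and avoids the singular--kernel machinery, while your Duhamel approach makes the semigroup smoothing explicit and is perhaps more readily adaptable to lower regularity. Your treatment of $\partial_t u\in C((0,\infty);\mathcal F_{0,1})$ via the instantaneous smoothing $u(t)\in\mathcal F_{r,1}$ for $t>0$ is also more detailed than what the paper writes (the paper's proof of this theorem is quite terse on that point).
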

\begin{proof}
To prove global existence for equation (\ref{cl}) in $\mathcal{F}_{2,1}(\mathbb{R})$, it is enough to combine Theorem \ref{x} with the next computations.
\begin{align}
\dfrac{1}{2}\dfrac{d}{dt}\nora{xu(t)}{0}{2}=-(xu,xuu_x)_0-\beta (xu,x\mathcal{H} u_{xx})_0-\eta(xu,x \mathcal{H} u_x)_0+\eta( xu , xu_{xx})_0,
\end{align}
\begin{align}
-(xu,xuu_x)_0&\leq \nor{u_x}{L^{\infty}}\nora{xu}{0}{2}\leq c \nor{u}{2}\nora{u}{L_1^2}{2}, \notag \\
-(xu,x\mathcal{H}u_{xx})_0&=-(xu, \mathcal{H} (xu_{xx}))_0\leq \nor{\mathcal{H} (xu_{xx})}{0}\nor{xu}{0}\leq \nor{xu_{xx}}{0}\nor{u}{L_1^2} \notag \\
-(xu,x \mathcal{H} u_x)_0&= -(xu,\mathcal{H}(xu_x))_0 \leq \nor{xu}{0}\nor{\mathcal{H}(xu_x)}{0}\leq \nor{xu}{0}\nor{xu_x}{0}, \notag \\
(xu,x u_{xx})_0&= \nor{xu}{0}\nor{xu_{xx}}{0} \leq \nor{u}{L_1^2}\nor{xu_{xx}}{0}, \notag
\end{align}
where by (\ref{estpriori3}) in Lemma \ref{estpriori}, $\nor{u(t)}{2}\leq F(t,\eta,\nor{\phi}{2})$, for all $t\in [0,T]$. So,
\begin{equation*}
\dfrac{1}{2}\dfrac{d}{dt}\nora{xu(t)}{0}{2} \leq C_{\eta, \beta} \nora{u(t)}{\mathcal{F}_{2,1}}{2}.
\end{equation*}
Gronwall's inequality then leads the result.
\end{proof}


\subsection{The Unique Continuation Principle}

We will now prove Theorem \ref{decaida}. 
\begin{theorem}
Let $\phi \in \mathcal{F}_{2,2}$. Then for each $\beta$ and $\eta$ positives there exists a unique $u\in C([0,\infty),\mathcal{F}_{2,2})$ such that
$\partial_tu \in C((0,+\infty);\mathcal{F}_{-1,1}(\mathbb{R}))$ and (\ref{cl}) is satisfied.
\end{theorem}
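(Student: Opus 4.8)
The plan is to carry out, with the extra weight accounting for the $L_2^2$–component, the scheme already used for $\mathcal{F}_{2,1}(\mathbb{R})$ in this section. Since $\mathcal{F}_{2,2}(\mathbb{R})\hookrightarrow\mathcal{F}_{2,1}(\mathbb{R})$, Theorem \ref{globalf21} already furnishes, for $\phi\in\mathcal{F}_{2,2}(\mathbb{R})$, a unique global solution $u\in C([0,\infty);\mathcal{F}_{2,1}(\mathbb{R}))$ of \eqref{cl} with $\partial_tu\in C((0,\infty);\mathcal{F}_{0,1}(\mathbb{R}))$, so by uniqueness the only thing to establish is that the $L_2^2$–part persists: $\sup_{t\in[0,T]}\nor{u(t)}{L_2^2}<\infty$ for every $T>0$, with $t\mapsto u(t)$ continuous into $L_2^2(\mathbb{R})$. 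I would get this in two steps: a local contraction directly in $C([0,T];\mathcal{F}_{2,2}(\mathbb{R}))$, and then an \emph{a priori} differential inequality for $\nora{u(t)}{L_2^2}{2}$ closed by Gronwall's inequality against the already-known global $\mathcal{F}_{2,1}$–bound.

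For the local step, run the fixed point for $\Psi(w)=S(t)\phi-\tfrac12\int_0^tS(t-t')\partial_x(w^2)(t')\,dt'$ on a ball of $C([0,T];\mathcal{F}_{2,2}(\mathbb{R}))$. The linear term is handled by Proposition \ref{emuc0enfsr}: part (b) with $r=2$ gives the $\mathcal{F}_{2,2}$–bound for $S(t)\phi$ and shows that $t\mapsto S(t)\phi$ stays continuous into $\mathcal{F}_{2,2}(\mathbb{R})$ exactly when the zero–mean compatibility condition $\widehat{\phi}(0)=0$ holds (this is \eqref{e7} with $r=2$); this is precisely where the non-smoothness of the symbol of $\mathcal{H}$ costs one order of decay compared with Benjamin--Ono, and it is consistent with the evolution because $\int_{\mathbb{R}}u(t,x)\,dx$ is conserved under \eqref{cl}. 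The core estimate is the weighted bilinear bound
$$\left\|\int_0^tS(t-t')\partial_x(uv)(t')\,dt'\right\|_{C([0,T];\mathcal{F}_{2,2})}\ \lesssim_{\eta,\beta}\ T^{1/4}\,\nor{u}{C([0,T];\mathcal{F}_{2,2})}\,\nor{v}{C([0,T];\mathcal{F}_{2,2})}.$$
I would deduce it from the pointwise–in–$\tau$ semigroup estimate $\nor{\langle x\rangle^2 S(\tau)\partial_x g}{0}\lesssim_{\eta,\beta}(\tau^{-1/2}+1+\tau^{1/2})\,\nor{g}{L_2^2}$, obtained by expanding $\partial_\xi^2\!\big(E(\xi,\tau)\,i\xi\,\widehat{g}(\xi)\big)$ by Leibniz and quoting Lemma \ref{lemdecaida}: the only Dirac mass occurring in $\partial_\xi^2E(\cdot,\tau)$, namely $2\eta\tau\,\delta$ (see \eqref{dos}), is annihilated because it is multiplied by $i\xi\,\widehat{g}$, which vanishes at $\xi=0$ — so no derivative is ever placed on $g$ — and the polynomial growth in $\xi$ of the remaining terms, together with the bounded $\mathrm{sgn}(\xi)$–multipliers, is dominated by $e^{-\eta\xi^2\tau}$. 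Taking $g=uv$ and using $\nor{uv}{L_2^2}\lesssim\nor{u}{H^2}\nor{v}{L_2^2}\lesssim\nor{u}{\mathcal{F}_{2,2}}\nor{v}{\mathcal{F}_{2,2}}$ (by $H^2(\mathbb{R})\hookrightarrow L^\infty(\mathbb{R})$) and integrating the kernel over $\tau\in(0,t)$ gives the displayed inequality; combined with the standard estimate for the $H^2$–part (Remark \ref{RemarkLB1} with $s=2$), $\Psi$ is a contraction for $T$ small and produces the local solution.

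For globalization I would differentiate $\nora{u(t)}{L_2^2}{2}$ along the flow (rigorously on the smoothed solution and a truncated weight), insert \eqref{cl}, and integrate by parts. After commuting $\langle x\rangle^2$ through $\mathcal{H}$, the dispersive term reduces, via the identity $(\mathcal{H}\partial_xv,\partial_xv)_0=0$ with $v=\langle x\rangle^2u$, to lower–order terms — here $[\mathcal{H},\langle x\rangle^2]u_{xx}=0$ because $\int u_{xx}\,dx=\int xu_{xx}\,dx=0$ — while the dissipative term $\eta(\langle x\rangle^2u,\langle x\rangle^2u_{xx})_0$ produces the favorable quantity $-\eta\nor{\partial_x(\langle x\rangle^2u)}{0}^2$; the remaining piece $-\eta(\langle x\rangle^2u,\langle x\rangle^2\mathcal{H}u_x)_0$ is handled once one notices that $[\mathcal{H},\langle x\rangle^2]u_x$ is a constant multiple of $\int_{\mathbb{R}}u(t,x)\,dx$, hence lies in $L^2(\mathbb{R})$ precisely by the conserved zero–mean; and the nonlinear term $-(\langle x\rangle^2u,\langle x\rangle^2uu_x)_0$ is, after one integration by parts, $\lesssim\nor{u}{L^\infty}\nor{u}{L_2^2}\nor{xu}{0}$. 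All lower–order terms are controlled by $\nor{u}{L_2^2}$, $\nor{u}{H^2}$ and $\nor{xu_x}{0}\lesssim\nor{u}{0}+\nor{u}{L_2^2}^{1/2}\nor{u}{H^2}^{1/2}$ (integration by parts), the borderline ones being absorbed by $-\eta\nor{\partial_x(\langle x\rangle^2u)}{0}^2$. This yields $\frac{d}{dt}\nora{u(t)}{L_2^2}{2}\lesssim_{\eta,\beta}(1+\nor{u(t)}{\mathcal{F}_{2,1}})^2(1+\nora{u(t)}{L_2^2}{2})$, and since $\sup_{[0,T]}\nor{u(t)}{\mathcal{F}_{2,1}}<\infty$ for every $T$ by Theorem \ref{globalf21}, Gronwall gives $u\in C([0,\infty);\mathcal{F}_{2,2}(\mathbb{R}))$. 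Then $\partial_tu\in C((0,\infty);\mathcal{F}_{-1,1}(\mathbb{R}))$ is read off from \eqref{cl}, the worst term $\mathcal{H}u_{xx}$ costing two derivatives and, after $\langle x\rangle\mathcal{H}u_{xx}=\mathcal{H}(\langle x\rangle u_{xx})$ up to a constant, one weight; and uniqueness follows from the $L^2$–energy identity for $w=u_1-u_2$, where the $\mathcal{H}$–term is skew–symmetric, $\mathcal{H}u_x-u_{xx}$ has symbol $\xi^2-|\xi|\geq-\tfrac14$, and the quadratic term is $\lesssim(\nor{u_1}{H^2}+\nor{u_2}{H^2})\nor{w}{0}^2$, so Gronwall forces $w\equiv0$.

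The main obstacle is the weighted bilinear (equivalently, weighted semigroup) estimate: the whole subtlety is to verify, through the explicit formulas of Lemma \ref{lemdecaida}, that the factor $i\xi$ coming from $\partial_x$ exactly cancels the single Dirac mass appearing in $\partial_\xi^2E$, so that $\langle x\rangle^2$ applied to the Duhamel term still lands in $L^2(\mathbb{R})$; it is the breakdown of this cancellation at the level of $\partial_\xi^3E$ (where the more singular $\delta'$ appears and is no longer killed by $i\xi$) that is responsible for the loss of persistence in $\mathcal{F}_{3,3}$ stated in Theorem \ref{decaida}. Keeping careful track of the $[\mathcal{H},\langle x\rangle^2]$ commutators in the \emph{a priori} estimate is a close second difficulty.
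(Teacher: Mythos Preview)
The paper states this theorem without proof (it appears as an auxiliary result in Section~6.2, immediately before Theorem~\ref{ugorroescero}, with no \texttt{proof} environment), so there is nothing to compare against. Your overall strategy --- local contraction in $C([0,T];\mathcal{F}_{2,2})$ via the weighted semigroup/bilinear estimate built from Lemma~\ref{lemdecaida}, followed by an \emph{a priori} differential inequality for $\nora{u(t)}{L_2^2}{2}$ closed by Gronwall against the global $\mathcal{F}_{2,1}$ bound of Theorem~\ref{globalf21} --- is the standard and correct route, and is exactly in the spirit of the $\mathcal{F}_{2,1}$ argument the paper does carry out.

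You have, in fact, caught something the paper glosses over: the theorem as stated is \emph{incomplete}. By Proposition~\ref{emuc0enfsr}(b) with $r=2$, the linear flow $t\mapsto S(t)\phi$ is continuous into $\mathcal{F}_{2,2}$ if and only if $\widehat{\phi}(0)=0$, and Theorem~\ref{ugorroescero} (stated immediately after) shows that any solution $u\in C([0,T];\mathcal{F}_{2,2})$ of \eqref{cl} necessarily satisfies $\widehat{u}(t,0)=0$ for all $t$, hence in particular $\widehat{\phi}(0)=0$. So the hypothesis $\widehat{\phi}(0)=0$ must be added to the statement; you correctly identify this and build it into your argument (it is what makes the commutator $[\mathcal{H},\langle x\rangle^2]u_x$ vanish rather than produce a nonzero constant, which would not lie in $L^2$). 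Two small points of phrasing: when you write ``no derivative is ever placed on $g$'' you mean no \emph{$x$-derivative} --- the Leibniz expansion of $\partial_\xi^2\big(E(\xi,\tau)\,i\xi\,\widehat{g}\big)$ certainly produces $\partial_\xi\widehat{g}$ and $\partial_\xi^2\widehat{g}$, which is why the $L_2^2$-norm of $g$ appears on the right; and in the energy estimate the term $\eta(\langle x\rangle^2u,\langle x\rangle^2u_{xx})_0$ produces, besides the good $-\eta\nor{\partial_x(\langle x\rangle^2u)}{0}^2$, a lower-order commutator piece from $\partial_x(\langle x\rangle^2)$ that should be tracked explicitly before absorption. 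Neither affects the validity of the scheme.
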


\begin{theorem}\label{ugorroescero}
Let $\beta$, $\eta >0$ be fixed and let $T>0$. Assume that $u \in C([0,T];\mathcal{F}_{2,2}(\mathbb{R}))$ is the solution of (\ref{cl}). Then, $\widehat{u}(t,0)=0$, for all $t\in [0,T]$.
\end{theorem}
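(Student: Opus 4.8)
The idea is to track the evolution of $\widehat{u}(t,0)$ directly from the equation. Since $u\in C([0,T];\mathcal{F}_{2,2}(\mathbb{R}))\subset C([0,T];L^2_2(\mathbb{R}))$, the Fourier transform $\widehat{u}(t,\cdot)$ lies in $H^2(\mathbb{R})$ for each $t$, so evaluation at $\xi=0$ makes sense and is continuous in $t$. First I would write the integral equation \eqref{intequation} (with $S(t)$ as in \eqref{semigrupos}) on the Fourier side at $\xi=0$:
\begin{equation*}
\widehat{u}(t,0)=e^{i q(0)t-p(0)t}\widehat{\phi}(0)-\int_0^t e^{i q(0)(t-t')-p(0)(t-t')}\,\bigl(i\xi\,\widehat{u^2/2}(t',\xi)\bigr)\Big|_{\xi=0}\,dt'.
\end{equation*}
Since $q(0)=p(0)=0$ and the factor $i\xi$ vanishes at $\xi=0$, provided $\widehat{u^2}(t',\cdot)$ is bounded near $\xi=0$ (which follows from $u(t')\in L^2$, hence $u(t')^2\in L^1$, so $\widehat{u^2}\in L^\infty$), the integrand is identically zero. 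Hence $\widehat{u}(t,0)=\widehat{\phi}(0)$ for all $t$, and it remains to show $\widehat{\phi}(0)=0$.

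The crucial point is that $\widehat{\phi}(0)=0$ is forced by the structure of the equation together with the regularity assumption $u(t)\in L^2_2(\mathbb{R})$ for \emph{all} $t\in[0,T]$; this is exactly where the non-smoothness of $\mathrm{sgn}(\xi)$ enters. I would differentiate $\widehat{u}(t,\xi)$ in $\xi$ at $\xi=0$ using the integral equation and the derivative formulas for $E(\xi,t)$ from Lemma \ref{lemdecaida}. Because $u(t)\in L^2_2$ means $\widehat{u}(t,\cdot)\in H^2(\mathbb{R})$, the quantities $\partial_\xi\widehat{u}(t,0)$ must remain bounded for every $t\in[0,T]$. But the formula \eqref{uno}–\eqref{dos} shows that $\partial_\xi E$ and $\partial_\xi^2 E$ carry terms like $\eta t\,\mathrm{sgn}(\xi)\,E(\xi,t)$ and $2\eta t\,\delta$; the $\delta$ term (equivalently, the jump of $\partial_\xi\widehat{u}(t,\xi)$ across $\xi=0$) has coefficient proportional to $t\,\widehat{u}(\cdot,0)$ plus contributions from the Duhamel term. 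Carrying this out, one finds that the jump of $\partial_\xi\widehat{u}(t,\xi)$ at $\xi=0$ equals (a nonzero constant times) $t\,\widehat{\phi}(0)$ plus a term that is $O(t^2)$ uniformly; for $\widehat{u}(t,\cdot)$ to stay in $H^2$ this jump must vanish for all small $t>0$, which forces $\widehat{\phi}(0)=0$.

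The main obstacle I expect is making the jump/Dirac-delta bookkeeping rigorous: one must justify differentiating under the time integral, control the Duhamel contribution $\int_0^t E(\xi,t-t')\,i\xi\,\widehat{u^2/2}(t',\xi)\,dt'$ and its $\xi$-derivatives near $\xi=0$ (here the factor $\xi$ helps kill the worst $\mathrm{sgn}$-singularity but one still needs $\widehat{u^2}(t',\cdot)$ to be, say, Lipschitz near the origin, which requires $u(t')\in L^2_1$ — available since $\mathcal{F}_{2,2}\subset L^2_2\subset L^2_1$), and then isolate the coefficient of the genuine discontinuity. A clean way to organize this is: (i) show $\widehat{u}(t,0)=\widehat\phi(0)$ as above; (ii) compute $[\![\partial_\xi\widehat{u}(t,\cdot)]\!]_{\xi=0}$, the jump at zero, from the integral equation, showing it equals $2\eta t\,\widehat\phi(0)+(\text{remainder bounded in }t)$ — actually more carefully $=c\,\eta t\,\widehat u(t,0)+\dots$; (iii) since $u(t)\in L^2_2$ implies $\partial_\xi\widehat u(t,\cdot)$ is continuous (no jump), conclude $\widehat\phi(0)=0$; then (i) gives $\widehat u(t,0)=0$ for all $t\in[0,T]$. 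I would present steps (i)–(iii) in that order, relegating the derivative computations to Lemma \ref{lemdecaida} and estimating the remainder with $\sup_{[0,T]}\|u(t)\|_{\mathcal{F}_{2,2}}<\infty$.
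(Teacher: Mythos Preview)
Your approach is correct and takes a genuinely different route from the paper's. The paper multiplies the equation by $x^2$, passes to the Fourier side, and tracks the distributional identity
\[
\partial_t\partial_\xi^2\widehat{u}(t,\xi)=\widehat{\gamma(t)}(\xi)+i\beta\,\widehat{\alpha(t)}(\xi)+2\eta\,\delta(\xi)\,\widehat{u}(t,0)+\eta\,\widehat{\kappa(t)}(\xi),
\]
with $\gamma,\alpha,\kappa\in C([0,T];L^2_{-2})$. Integrating in $t$ and using that $\partial_\xi^2\widehat{u}(t,\cdot)\in L^2\subset L^2_{-2}$, the $\delta$-term must vanish, giving $\int_0^t\widehat{u}(t',0)\,dt'=0$ for all $t$, hence $\widehat{u}(t,0)=0$. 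No conservation law is invoked, and the argument lives entirely at the level of second $\xi$-derivatives and distributions.

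Your argument instead (i) uses the integral equation at $\xi=0$ to get the conservation $\widehat{u}(t,0)=\widehat{\phi}(0)$, then (ii) works one derivative lower, computing the \emph{jump} of $\partial_\xi\widehat{u}(t,\cdot)$ at $\xi=0$ from the Duhamel formula and Lemma~\ref{lemdecaida}. This is cleaner than you anticipate: since $\widehat{w}(t',0)=0$ and $u^2(t')\in L^2_2$ (so $\partial_\xi\widehat{u^2}(t',\cdot)\in H^1\subset C$), the Duhamel contribution to the jump is \emph{exactly} zero, not merely $O(t^2)$; the only jump comes from the linear part and equals $2\eta t\,\widehat{\phi}(0)$. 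Since $\widehat{u}(t,\cdot)\in H^2$ forces $\partial_\xi\widehat{u}(t,\cdot)$ to be continuous, you conclude $\widehat{\phi}(0)=0$ for any single $t>0$. What your route buys is an elementary pointwise argument (no $\delta$'s, no $L^2_{-2}$) and a transparent split into ``conservation $+$ obstruction at $t>0$''; what the paper's route buys is a self-contained computation from the PDE that does not require first isolating the conservation law and that generalizes verbatim to the $\mathcal{F}_{3,3}$ step (Theorem~\ref{decaida}), where the same template produces the $\delta'$ and $\delta$ conditions simultaneously.
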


\begin{proof}
Multiplying (\ref{cl}) by $x^2$ we obtain
\begin{align}
\partial_t(x^2 u)= -x^2u\partial_xu-\beta x^2\mathcal{H} \partial_x^2u-\eta x^2\mathcal{H}\partial_xu+\eta x^2 \partial_x^2u . \label{x2problema}
\end{align}
By assumption $x^2u(t)\in L^2(\mathbb{R})$, for all $t\in [0,T]$. Then, we have that
\begin{align}
\nor{x^2u\partial_xu}{0}\leq \nor{\partial_xu}{L^{\infty}}\nor{x^2 u}{0}\leq \nor{u}{2}\nor{x^2u}{0} \label{x2problema1}
\end{align}
and therfore $\gamma(t):=x^2(u\partial_xu)(t) \in L^2(\mathbb{R})$, for all $t\in [0,T]$. From this, it follows that $\gamma \in C([0,T];L^2(\mathbb{R}))$, in fact
\begin{align}
\nor{\gamma(t)-\gamma(t_0)}{0}&\leq \nor{x^2u(t)}{0}\nor{\partial_x(u(t)-u(t_0))}{L^{\infty}}+\nor{\partial_xu(t_0)}{L^{\infty}}\nor{x^2(u(t)-u(t_0))}{0} \notag \\
&\leq \nor{u(t)}{\mathcal{F}_{2,2}}\nor{u(t)-u(t_0)}{2}+\nor{u(t_0)}{2}\nor{u(t)-u(t_0)}{\mathcal{F}_{2,2}}. \label{gamacontinua}
\end{align}
Applying the Fourier transform in (\ref{x2problema}) we get
\begin{align}
\partial_t\partial_{\xi}^2\widehat{u}(t,\xi)=\widehat{\gamma(t)}(\xi)+i\beta \partial_{\xi}^2(\,sgn(\xi)\xi^2\widehat{u}(t,\xi))+\eta \partial_{\xi}^2(\,sgn(\xi)\xi \,\widehat{u}(t,\xi)) - \eta \partial_{\xi}^2(\xi^2 \,\widehat{u}(t,\xi)). \label{x2problematf}
\end{align}
Since $u(t)\in \mathcal{F}_{2,2}(\mathbb{R})$ for all $t\in [0,T]$, we have
\begin{align}
\widehat{\alpha(t)}(\xi)&:=\partial_{\xi}^2(\,sgn(\xi)\xi^2\,\widehat{u}(t,\xi)) \notag \\
\widehat{\alpha(t)}(\xi)&= sgn(\xi)(2\widehat{u}(t,\xi)+4\xi\partial_{\xi}\widehat{u}(t,\xi) +\xi^2\partial_{\xi}^2\widehat{u}(t,\xi))\in C([0,T];L_{-2}^2(\mathbb{R})). \label{alpha}
\end{align}
Similarly, we have that
\begin{align}
\partial_{\xi}^2[(\,sgn(\xi)\xi - \xi^2)\widehat{u}(t,\xi)]=2\delta (\xi)\widehat{u}(t,0)+\widehat{\kappa (t)}(\xi), \label{kapa}
\end{align}
where
\begin{equation}
\widehat{\kappa (t)}(\xi)= -2\widehat{u}(t,\xi)+2(\,sgn(\xi)-2\xi) \partial_{\xi}\widehat{u}(t,\xi)+( \,sgn(\xi)\xi - \xi^2) \partial_{\xi}^2\widehat{u}(t,\xi) \in C([0,T];L_{-2}^2(\mathbb{R})). \label{kapa1}
\end{equation}
From (\ref{x2problematf}), (\ref{alpha}), (\ref{kapa}) and (\ref{kapa1}) we have that
\begin{align}
\partial_t\partial_{\xi}^2\widehat{u}(t,\xi)=\widehat{\gamma(t)}(\xi)+i\beta \widehat{\alpha(t)}(\xi)+2\eta \delta (\xi)\widehat{u}(t,0)+\eta \widehat{\kappa (t)}(\xi). \label{comoquedo}
\end{align}

Integrating now (\ref{comoquedo}) between $0$ and $t$, we find that
\begin{align}
2\eta \delta (\xi) \int_0^t\widehat{u}(t',0)\,dt' \in C([0,T];L_{-2}^2(\mathbb{R})),
\end{align}
The last expression implies that
\begin{align}
\int_0^t\widehat{u}(t',0)\,dt'=0,\qquad \text{for all  }t\in [0,T], \label{dacero}
\end{align}
and hence $\widehat{u}(t,0)=0$, for all $t\in [0,T]$.
\end{proof}

We know that
$$u_t=-uu_x - \beta \mathcal{H} u_{xx} - \eta (\mathcal{H}u_x -u_{xx})$$
so,
$$\partial_t\widehat{u}(t,\xi)=\dfrac{1}{\sqrt{2 \pi}}\int_{\mathbb{R}}(-uu_x - \beta \mathcal{H} u_{xx} - \eta (\mathcal{H}u_x -u_{xx}))e^{-i \xi x}\,dx$$
and
$$\partial_t\widehat{u}(t,0)=\dfrac{1}{\sqrt{2 \pi}}\int_{\mathbb{R}}(-uu_x - \beta \mathcal{H} u_{xx} - \eta (\mathcal{H}u_x -u_{xx}))\,dx = 0 ,$$
hence, $\widehat{u}(t,0)$ is a conserved quantity for the problem (\ref{problema}). 
\begin{proof}[Proof of the Theorem \ref{decaida}]
Multiplying (\ref{cl}) by $x^3$ we obtain
\begin{align}
\partial_t(x^3 u)= -x^3u\partial_xu-\beta x^3\mathcal{H} \partial_x^3u-\eta x^3\mathcal{H}\partial_xu+\eta x^3 \partial_x^2u . \label{x3problema}
\end{align}
By assumption $x^3u(t)\in L^2(\mathbb{R})$, for all $t\in [0,T]$. Then, we have that
\begin{align}
\nor{x^3u\partial_xu}{0}\leq \nor{\partial_xu}{L^{\infty}}\nor{x^3 u}{0}\leq \nor{u}{2}\nor{x^3u}{0}. \label{x3problema1}
\end{align}
So, $\gamma(t):=x^3(u\partial_xu)(t) \in L^2(\mathbb{R})$, for all $t\in [0,T]$. Similar to Theorem \ref{ugorroescero} we have that $\gamma \in C([0,T];L^2(\mathbb{R}))$. Taking the Fourier transform in (\ref{x3problema}) we find that
\begin{align}
\partial_t\partial_{\xi}^3\widehat{u}(t,\xi)=-i\widehat{\gamma(t)}(\xi)+i\beta \partial_{\xi}^3(\,sgn(\xi)\xi^2 \, \widehat{u}(t,\xi))+ \eta \partial_{\xi}^3(\,sgn(\xi)\xi \, \widehat{u}(t,\xi)) -\eta \partial_{\xi}^3(\xi^2\,\widehat{u}(t,\xi)). \label{x3problematf}
\end{align}
We see that
\begin{align}
\partial_{\xi}^3(\,sgn(\xi)\xi^2\widehat{u}(t,\xi))&=4\delta(\xi)\widehat{u}(t,0)+6 sgn(\xi)\partial_{\xi}\widehat{u}(t,\xi)+6 \xi sgn(\xi) \partial_{\xi}^2\widehat{u}(t,\xi)+ sgn(\xi) \xi^2 \partial_{\xi}^3\widehat{u}(t,\xi) \notag \\
&=4\delta(\xi)\widehat{u}(t,0)+\widehat{\alpha (t)}(\xi) \label{x3problema2}
\end{align}
where 
\begin{equation}\label{x3problema3}
\widehat{\alpha (t)}(\xi)=6 sgn(\xi)\partial_{\xi}\widehat{u}(t,\xi)+6 \xi sgn(\xi) \partial_{\xi}^2\widehat{u}(t,\xi)+ sgn(\xi) \xi^2 \partial_{\xi}^3\widehat{u}(t,\xi) \in C([0,T];L_{-2}^2(\mathbb{R})), 
\end{equation}
and
\begin{align}
\partial_{\xi}^3[(\,sgn(\xi) \xi - \xi^2)\widehat{u}(t,\xi)]&=2\delta'(\xi)\widehat{u}(t,0)+4 \delta(\xi)\partial_{\xi}\widehat{u}(t,0)+\widehat{\Gamma (t)}(\xi), \label{x3problema4}
\end{align}
where
\begin{align}
\widehat{\Gamma (t)}(\xi)&=3\,sgn(\xi) \partial_{\xi}^2\widehat{u}(t,\xi)+\,sgn(\xi) \xi \partial_{\xi}^3\widehat{u}(t,\xi)-6\,\partial_{\xi} \widehat{u}(t,\xi)-6\xi \partial_{\xi}^2\widehat{u}(t,\xi) +\notag \\
&-\xi^2 \partial_{\xi}^3 \widehat{u}(t,\xi)\in C([0,T];L_{-2}^2(\mathbb{R})). \label{x3problema5}
\end{align}
From (\ref{x3problematf})-(\ref{x3problema5}) we get
\begin{align}
-i\partial_t \partial_{\xi}^3\widehat{u}(t,\xi)=&-\widehat{\gamma(t)}(\xi)+4\beta \delta(\xi) \widehat{u}(t,0) + \beta \widehat{\alpha (t)}(\xi) - i \eta \widehat{\Gamma(t)}(\xi) \notag \\
& -2i \eta \delta'(\xi)\widehat{u}(t,0)-4 i \eta \delta(\xi)\partial_{\xi}\widehat{u}(t,0). \label{x3problematf1}
\end{align}
Integrating (\ref{x3problematf1}) between $0$ and $t$, we have that
\begin{align}
(4\beta \delta(\xi)-2i \eta \delta'(\xi))\int_0^t\widehat{u}(t',0)\,dt'-4 i \mu \delta(\xi)\int_0^t\partial_{\xi}\widehat{u}(t',0)\,dt' \in C([0,T];L_{-2}^2(\mathbb{R})). \label{x3problematf2}
\end{align}
Then,
$$\int_0^t\widehat{u}(t',0)\,dt'=\int_0^t\partial_{\xi}\widehat{u}(t',0)\,dt'=0,$$
for all $t\in [0,T]$. The last expression implies that 
\begin{align}
\widehat{u}(t,0)=\partial_{\xi}\widehat{u}(t,0)=0,\label{x3problema4}
\end{align}
for all $t\in [0,T]$.

On the other hand, we have that $u$ satisfies the integral equation 
\begin{align}
u(t,\cdot )=S(t)\phi(\cdot) -\dfrac{1}{2}\int_0^t S(t-\tau)\,\partial_x(u^2)(\tau ,\cdot)\,d\tau  \label{ecintegral1}
\end{align}
for $t\in [0,T]$. Denoting $v:=u^2$, $w:=\partial_xv$ and taking the Fourier transform in (\ref{ecintegral1}) we get
\begin{align}
\widehat{u}(t,\xi )&=E(t,\xi )\widehat{\phi}(\xi) -\dfrac{1}{2}\int_0^t E(t-\tau ,\xi )\,\widehat{w}(\tau ,\xi)\,d\tau . \label{secalcula} \\
\end{align}
Derivating three times the equation (\ref{secalcula}), respect to $\xi $, we obtain
\begin{align}
\partial_{\xi}^3\widehat{u}(t,\xi )&=\partial_{\xi}^3E(t,\xi)\widehat{\phi}(\xi)+3\partial_{\xi}^2E(t,\xi)\partial_{\xi}\widehat{\phi}(\xi)+3\partial_{\xi}E(t,\xi)\partial_{\xi}^2 \widehat{\phi}(\xi)\notag \\
&+E(t,\xi)\partial_{\xi}^3 \widehat{\phi}(\xi)-\dfrac{1}{2}\int_0^t  \partial_{\xi}^3E(t-\tau ,\xi)\widehat{w}(\tau ,\xi)\,d\tau \notag \\
&-\dfrac{3}{2}\int_0^t\partial_{\xi}^2E(t-\tau ,\xi)\partial_{\xi}\widehat{w}(\tau ,\xi)\,d\tau -\dfrac{3}{2}\int_0^t\partial_{\xi}E(t-\tau ,\xi)\partial_{\xi}^2\widehat{w}(\tau ,\xi)\,d\tau \notag \\
&- \dfrac{1}{2}\int_0^t E(t-\tau ,\xi)\partial_{\xi}^3 \widehat{w}(\tau ,\xi)\,d\tau .\label{trexider}
\end{align}
Since $\eta >0$, $\phi , u ,v \in \mathcal{F}_{3,3}$ and using Lemma (\ref{lemdecaida}), it follows that:
\begin{align}
\partial_{\xi}E(t,\xi)\partial_{\xi}^2 \widehat{\phi}(\xi)=t[(\eta + 2i\beta \xi )\,sgn(\xi)-2\eta \xi ] E(t,\xi)\partial_{\xi}^2 \widehat{\phi}(\xi) \in C([0,T];L^2(\mathbb{R})) \label{termino3}
\end{align}
and
\begin{align}
E(t,\xi)\partial_{\xi}^3 \widehat{\phi}(\xi) \in C([0,T];L^2(\mathbb{R})). \label{termino4}
\end{align}
We can see that
\begin{align}
\partial_{\xi}^2\widehat{w}(\tau ,\xi)= i(2\partial_{\xi}\widehat{v}(\tau ,\xi)+ \xi \partial_{\xi}^2\widehat{v}(\tau ,\xi)) \notag
\end{align}
and
\begin{align}
\partial_{\xi}^3\widehat{w}(\tau ,\xi)= i(3\partial_{\xi}^2\widehat{v}(\tau ,\xi)+ \xi \partial_{\xi}^3\widehat{v}(\tau ,\xi)). \notag
\end{align}
Then,
\begin{align}
-\dfrac{3}{2}\int_0^t\partial_{\xi}E(t-\tau ,\xi)\partial_{\xi}^2\widehat{w}(\tau ,\xi)\,d\tau\in C([0,T];L^2(\mathbb{R})), \label{termino7}
\end{align}
and
\begin{align}
-\dfrac{1}{2}\int_0^t E(t-\tau ,\xi)\partial_{\xi}^3\widehat{w}(\tau ,\xi)\,d\tau \in C([0,T];L^2(\mathbb{R})). \label{termino8}
\end{align}
Similarly, we find that
\begin{align}
\partial_{\xi}^3E(t,\xi)\widehat{\phi}(\xi)=f_1(t,\xi)+ 4it\delta(\xi)\widehat{\phi}(\xi) + 2 \mu t \delta'(\xi)\widehat{\phi}(\xi), \label{termino1}
\end{align}
where $f_1(t,\xi )\in C([0,T];L^2(\mathbb{R}))$.

Using (\ref{termino3})-(\ref{termino1}) and making similar considerations to the other terms in (\ref{trexider}), we obtain that
\begin{align}
\partial_{\xi}^3\widehat{u}(t,\xi )&= f(t,\xi)+2\eta t \delta'(\xi)\widehat{\phi}(\xi)+4i \beta t \delta(\xi)\widehat{\phi}(\xi)+6\eta t\delta(\xi)\partial_{\xi}\widehat{\phi}(\xi) \notag \\
&\quad -\dfrac{1}{2}\int_0^t2\eta (t-\tau )\delta'(\xi)\widehat{w}(\tau , \xi)\,d\tau - \dfrac{1}{2}\int_0^t 4i \beta (t-\tau) \delta(\xi) \widehat{w}(\tau ,\xi)\,d\tau \notag \\
&\quad -\dfrac{3}{2}\int_0^t2\eta (t-\tau )\delta(\xi)\partial_{\xi}\widehat{w}(\tau , \xi)\,d\tau \label{reduciendo}
\end{align}

where $f(\cdot , \xi)\in C([0,T];L^2(\mathbb{R}))$. Since 
$$\delta(\xi)\partial_{\xi}\widehat{\phi}(\xi)=\delta(\xi)\partial_{\xi}\widehat{\phi}(0)=0,$$
we have that
\begin{align}
\partial_{\xi}^3\widehat{u}(t,\xi )&= f(t,\xi)+(2\eta t \widehat{\phi}(\xi)- \eta \int_0^t (t-\tau )\widehat{w}(\tau , \xi)\,d\tau )\delta'(\xi) \notag \\
&+(4i\beta t\widehat{\phi}(\xi)-2i\beta \int_0^t (t-\tau)\widehat{w}(\tau ,\xi)\,d\tau -3\eta \int_0^t (t-\tau )\partial_{\xi}\widehat{w}(\tau , \xi)\,d\tau )\delta(\xi) \label{reduciendo1}
\end{align}

Since $\partial_{\xi}^3\widehat{u}(t,\xi )$ and $f(t,\xi)$ are measurable functions for all $t\in [0,T]$, it follows from equation (\ref{reduciendo1}) that

\begin{align}
2\eta t \widehat{\phi}(\xi)- \eta \int_0^t (t-\tau )\widehat{w}(\tau , \xi)\,d\tau &=0 \label{yacasi1} \\
4i\beta t\widehat{\phi}(\xi)-2i\beta \int_0^t (t-\tau)\widehat{w}(\tau ,\xi)\,d\tau -3\eta \int_0^t (t-\tau )\partial_{\xi}\widehat{w}(\tau , \xi)\,d\tau &=0 .  \label{yacasi2}
\end{align}

But $\widehat{w}(\tau ,0)=\widehat{\partial_xv}(\tau , 0)=0$ and $\widehat{\phi}(0)=0$ from (\ref{x3problema4}) then

\begin{align}
\int_0^t (t-\tau )\partial_{\xi}\widehat{w}(\tau , 0)\,d\tau &=0 , \qquad \text{for all}\quad t\in [0,T] \label{yacasi3}
\end{align}

Let $t\in [0,T]$. Since $u(t)\in \mathcal{F}_{3,3}$ we can see that $xu(t)\in L_2^2(\mathbb{R})$. Then, $\widehat{xu(t)}\in H^2(\mathbb{R})$ and therfore $xu(t,\cdot)\in L^1(\mathbb{R})$. So,

\begin{align}
\int |x\partial_xu(t)^2|\,dx &= 2 \int |xu(t)\partial_xu(t)|\,dx \leq 2\nor{\partial_xu(t)}{L^{\infty}}\int |xu(t,x)|\,dx \notag \\
&\leq 2\nor{u(t)}{2}\nor{xu(t,\cdot)}{L^1}< +\infty.
\end{align}
Then,
\begin{align}
i\partial_{\xi}\widehat{w}(t,\xi)=\widehat{xw}(t,\xi)=\dfrac{1}{\sqrt{2\pi}}\int_{\mathbb{R}}x(\partial_xu^2)(t,x) e^{-i\xi x}\,dx , \notag
\end{align}
and,
\begin{align}
\partial_{\xi}\widehat{w}(t,0)=- \dfrac{i}{\sqrt{2 \pi}}\int_{\mathbb{R}}x (\partial_xu^2)(t,x)\,dx = \dfrac{1}{\sqrt{2 \pi}}\nor{u(t)}{0}^2 .\label{yacasi4}
\end{align}

Combining (\ref{yacasi3}) and (\ref{yacasi4}) we get
\begin{align}
\int_0^t(t-\tau)\nora{u(\tau)}{0}{2}\,d\tau =0, \label{listo}
\end{align}

for all $t\in [0,T]$. From (\ref{listo}), we can conclude that $\nor{u(t)}{0}=0$, for all $t\in [0,T]$. This completes the proof of the theorem.

\end{proof}

\subsection*{Acknowledgements}
The authors are supported by the Universidad Nacional de Colombia, sede Bogot\'a. The authors would like to thank the seminar on Evolution PDE for postgraduate students at the Universidad Nacional de Colombia, sede Bogot\'a, for all their help and comments.


\bibliographystyle{amsplain}

\begin{thebibliography}{30}
\bibitem[1]{BI} H. A. Biagioni, J.L. Bona, R. I\'orio and M. Scialom \textit{On the Korteweg-de Vries-Kuramoto-Sivashinsky equation}, Adv. Diff. Eq. \textbf{1} (1996), pp. 1-20.
\bibitem[2]{Borys} B. Alvarez, \textit{The Cauchy problem for a nonlocal perturbation of the KdV equation}, Differential Integral Equations, \textbf{16} No. 10 (2003), pp. 1249-1280.
\bibitem[2]{Bourgain} J. Bourgain,\textit{ Fourier transform restriction phenomena for certain lattice subsets and applications to nonlinear evolutons equations. II. The KdV equations}, Geom. Funct. Anal. \textbf{3} (1993), pp. 209-262.
\bibitem[3]{CL} H. H. Chen, Y. C. Lee, \textit{Nonlinear dynamical models of plasma turbulence}, Phys. Scr. \textbf{T2/1}  (1982), pp. 41-47. 
\bibitem[4]{clq} H. H. Chen, Y. C. Lee, S. Qian, \textit{A study of nonlinear dynamical models of plasma turbulence}, Phys. Fluids B 1 \textbf{1}(1989), pp. 87-98.
\bibitem[5]{clq1} H. H. Chen, Y. C. Lee, S. Qian, \textit{A turbulence model with stochastic soliton motion}, J. Math. Phys. \textbf{31}(1990), pp. 506-516.
\bibitem[6]{D} D. B. Dix, \textit{Temporal asymptotic behavior of solutions of the benjamin-ono-burgers equation}, J. Diff. Eq., \textbf{97}(1991), pp. 238-287.
\bibitem[7]{Dix} D. B. Dix, \textit{Nonuniqueness and uniqueness in the initial value problem for burgers' equation}, SIAM J. Math. Anal., \textbf{1} No. 1 (1996), pp. 1-17.
\bibitem[8]{Duque} O. Duque, \textit{Sobre una versi\'on bidimensional de la ecuaci\'on Benjamin-Ono generalizada}, PhD Thesis Universidad Nacional de Colombia (2014).
\bibitem[9]{Amin} S. A. Esfahani, \textit{High Dimensional Nonlinear Dispersive Models}, PhD Thesis IMPA (2008).
\bibitem[10]{FeKa} B. -F. Feng, T. Kawahara, \textit{Temporal evolutions and stationary waves for dissipative benjamin-Ono equation}, Phys. D \textbf{139}(2000), pp. 301-318.
\bibitem[11]{Iorio} R. J. I\'orio, Jr., \textit{On the Cauchy Problem for the Benjamin-Ono Equation}, Comm. PDE, 11 (1986), pp. 1031-1081.
\bibitem[12]{KPV} C.E Kenig, G. Ponce, L. Vega, \textit{ A bilinear estimate with applications to the KdV equation}, J. Amer.Math.Soc. 9 \textbf{2} (1996), pp. 573-603.
\bibitem[13]{P} R. Pastr\'an, \textit{On a Perturbation of the Benjamin-Ono Equation}, Nonlinear Anal. \textbf{93} (2013), pp. 273-296.
\bibitem[14]{PR} R. Pastr\'an, O. Rian\~o, \textit{On the well-posedness for the Chen-Lee Equation}, Submetted.
\bibitem[15]{Pilod} D. Pilod, \textit{ Sharp well-posedness results for the Kuramoto-Velarde equation}, Commun. Pure Appl. Anal. \textbf{7} No. 4 (2008), pp. 867-881.
\bibitem[16]{Vento} S. Vento, \textit{Well-posedness and ill-posedness results for dissipative Benjamin-Ono
equations}, Osaka J. Math. 48, 4 (2011), pp. 933-958.

\end{thebibliography}

\end{document}